\def\longformule#1#2{
\displaylines{ \qquad{#1} \hfill\cr \hfill {#2} \qquad\cr } }
\def\inte#1{
\displaystyle\mathop{#1\kern0pt}^\circ }
\let\pa=\partial
\let\p=\partial
\let\al=\alpha
\let\d=\delta
\let\e=\varepsilon
\let\ep=\varepsilon
\let\r=\rho
\let\s=\sigma
\let\f=\frac
\let\Om=\Omega
\let\th=\theta
\let\vf=\varphi
\let\D=\Delta
\let\wt=\widetilde
\def\cF{{\mathcal F}}
\def\cM{{\mathcal M}}
\def\cY{{\mathcal Y}}
\def\la{\lambda}
\def\La{\Lambda}
\def\dH{\dot{H}}
\def\virgp{\raise 2pt\hbox{,}}
\def\cdotpv{\raise 2pt\hbox{;}}
\def\eqdefa{\buildrel\hbox{\footnotesize def}\over =}
\def\C{\mathop{\mathbb C\kern 0pt}\nolimits}
\def\DD{\mathop{\mathbb D\kern 0pt}\nolimits}
\def\EE{\mathop{  {\mathbb E \kern 0pt}}\nolimits}
\def\K{\mathop{\mathbb K\kern 0pt}\nolimits}
\def\N{\mathop{\mathbb N\kern 0pt}\nolimits}
\def\Q{\mathop{\mathbb Q\kern 0pt}\nolimits}
\def\R{\mathop{\mathbb R\kern 0pt}\nolimits}
\def\SS{\mathop{\mathbb S\kern 0pt}\nolimits}
\def\ZZ{\mathop{\mathbb Z\kern 0pt}\nolimits}
\def\TT{\mathop{\mathbb T\kern 0pt}\nolimits}
\newcommand{\Z}{{\ZZ}}
\newcommand{\Rmnum}[1]{\uppercase\expandafter{\romannumeral #1} }
 \numberwithin{equation}{section}
\def\dive{\mathop{\rm div}\nolimits}
\def\no{\noindent}
\def\na{\nabla}
\def\p{\partial}
\def\u3{u^3}
\def\b3{b^3}
\def\PP{{\Bbb P}}
\def\v{{\rm v}}
\def\om3{\omega^3}
\newcommand{\w}[1]{\langle {#1} \rangle}
\newcommand{\beq}{\begin{equation}}
\newcommand{\eeq}{\end{equation}}
\newcommand{\ben}{\begin{eqnarray}}
\newcommand{\een}{\end{eqnarray}}
\newcommand{\beno}{\begin{eqnarray*}}
\newcommand{\eeno}{\end{eqnarray*}}
\newcommand{\andf}{\quad\hbox{and}\quad}
\newcommand{\with}{\quad\hbox{with}\quad}
\newtheorem{defi}{Definition}[section]
\newtheorem{thm}{Theorem}[section]
\newtheorem{lem}{Lemma}[section]
\newtheorem{rmk}{Remark}[section]
\newtheorem{col}{Corollary}[section]
\newtheorem{prop}{Proposition}[section]
\begin{document}
\title[Regularity of 2-D Density patch]
{ Striated Regularity of 2-D   inhomogeneous incompressible Navier-Stokes system with variable viscosity }
 \author[Marius Paicu]{Marius Paicu}
\address [M. PAICU]
{Universit\'e  de Bordeaux \\
 Institut de Math\'ematiques de Bordeaux\\
F-33405 Talence Cedex, France}
\email{marius.paicu@math.u-bordeaux1.fr}
\author[P. Zhang]{Ping Zhang} \address[P. Zhang]{Academy of Mathematics $\&$ Systems Science
and  Hua Loo-Keng Key Laboratory of Mathematics, The Chinese Academy of
Sciences, Beijing 100190, CHINA, and School of Mathematical Sciences, University of Chinese Academy of Sciences, Beijing 100049, China.} \email{zp@amss.ac.cn}

\date{\today}

\maketitle
\begin{abstract}
In this paper, we investigate the global existence and uniqueness of strong solutions to
 2D incompressible inhomogeneous Navier-Stokes equations with viscous coefficient  depending on the density and with initial
 density being discontinuous across some smooth interface. Compared with the previous results   for the inhomogeneous
 Navier-Stokes equations with constant viscosity, the main difficulty here lies in the fact that the $L^1$ in time Lipschitz estimate of
 the  velocity field can not be obtained by energy method (see \cite{DM17,LZ1, LZ2} for instance). Motivated by
 the key idea of Chemin to solve 2-D vortex patch of ideal fluid (\cite{Chemin91, Chemin93}), namely, striated regularity can help to get
  the $L^\infty$ boundedness of
 the double Riesz transform, we derive the {\it a priori} $L^1$ in time Lipschitz  estimate of the velocity field
 under the assumption that the viscous coefficient  is close enough to a positive constant in the bounded function space. As an application,
 we shall prove the propagation of $H^3$ regularity of the interface between fluids with different densities.
\end{abstract}

\noindent {\sl Keywords:} Inhomogeneous  Navier-Stokes equations, Littlewood-Paley theory,
Striated regularity. \

\vskip 0.2cm

\noindent {\sl AMS Subject Classification (2010):} 35Q30, 76D03  \\

\setcounter{equation}{0}
\section{Introduction}

The purpose of this paper is first to investigate the global existence and uniqueness of strong solutions to the two-dimensional
incompressible  inhomogeneous  Navier-Stokes equations with viscous
coefficient depending on the density and with  initial density being discontinuous across some smooth interface.
Then we are going to study the
 propagation of regularity for the interface between fluids with different densities. In general, inhomogeneous incompressible Navier-Stokes system reads
 \beq\label{INS} \left\{
\begin{array}
{l} \displaystyle \pa_t\rho + \dive(\rho u) = 0, \qquad (t,x)\in\Bbb{R}^+\times\Bbb{R}^2,\\
\displaystyle \pa_t (\rho u) + \dive(\rho u \otimes u) - \dive(2\mu(\rho)\cM(u)) + \nabla \Pi = 0, \\
\displaystyle \dive u = 0,
\end{array}
\right. \eeq where $\rho,$ $u=(u^1, u^2)$ stand for the density and
velocity of the fluid respectively, $\cM(u)=\f12\left(\na u +\na^T u\right),$ the stress tensor, $\Pi$
is a scalar pressure function, which guarantees the divergence free condition of the velocity field, and  the viscous
coefficient $\mu (\rho)$ is a smooth non-decreasing positive function on
$[0,\infty).$ Such a system describes a fluid which is obtained by
mixing several immiscible fluids that are incompressible and that have
different densities. It may also describe a fluid containing a
melted substance.\smallskip

When one assumes that the viscous coefficient is a positive constant, there are tremendous literatures on this topic.
One may check \cite{AGZ1, DM12, DM17, HPZ, LS, LZ1, LZ2, PZZ1} and the references therein. In general,
 Lions \cite{pl}  proved
the global existence of weak solutions to \eqref{INS} with finite
energy in any space dimension. Yet the uniqueness and regularity of such weak solutions
are big open questions even in two space dimensions  (see
pages 31-32 of \cite{pl}).

Let $\mathcal{R}$ be the usual Riesz transform,
 $\mathbb{Q}\eqdefa \nabla(-\Delta)^{-1}\dive,$  and
$\mathbb{P}\eqdefa I+\mathbb{Q}$ be the Leray projection operator
to the solenoidal vector field space. Then
 under
the additional assumptions that \beq\label{desjassume}
\|\mu(\rho_0)-1\|_{L^\infty({\Bbb T}^2)}\leq \ep \andf u_0\in H^1({\Bbb T}^2),
\eeq Desjardins \cite{desj} introduced the so-called ``pesudo-energy method" and proved the
following  interesting result:

\begin{thm}\label{Desj}
{\sl Let $\r_0\in L^\infty({\Bbb T}^2),$ $u_0\in H^1({\Bbb T}^2)$ with
$\dive\, u_0 = 0.$  Then there exists a positive constant
$\varepsilon$ such that under the assumption of \eqref{desjassume}, Lions
weak solutions (\cite{pl}) to \eqref{INS} satisfy the following
regularity properties  for all $T>0:$
\begin{itemize}
  \item $u\in L^\infty([0,T];\,H^1({\Bbb T}^2)) $ and $\sqrt{\rho}u_t\in L^2(]0,T[\times {\Bbb T}^2);$

  \item $\rho$ and $\mu(\rho)\in L^\infty([0,T]\times {\Bbb T}^2)\cap
  C([0,T];\,L^p({\Bbb T}^2))$ for all $p\in[1,\infty[;$

  \item $\nabla(\Pi-\mathcal{R}_i\mathcal{R}_j(2\mu \cM_{ij}))$ and
  $\nabla(\mathbb{P}\bigotimes\mathbb{Q}(2\mu \cM))_{ij}\in L^2(]0,T[\times {\Bbb T}^2);$

  \item $\Pi$ may be renormalized in such a way that for some
universal constant $C>0,$ \beq\label{upi} \Pi\quad\mbox{and}\quad
\nabla u\in L^2(]0,T[;\, L^p({\Bbb T}^2))\quad\mbox{for all}\,\;
p\in [4,p^*], \eeq where \beq\label{pstar}
\frac{1}{p^*}=2C\|\mu(\rho_0)-1\|_{L^\infty}. \eeq
\end{itemize}
Moreover, if $\mu(\r_0)\geq \underline{\mu}$ and $\log(\mu(\r_0))\in
W^{1,r}({\Bbb T}^2)$ for some $r>2,$ there exists some positive time
$\tau$ so that $u\in L^2(]0,\tau[; H^2({\Bbb T}^2))$ and $\mu(\r)\in
C([0,\tau];W^{1,\bar{r}}({\Bbb T}^2))$ for any $\bar{r}<r.$ }
\end{thm}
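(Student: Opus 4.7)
The plan is to upgrade a Lions weak solution to the pseudo-energy level following Desjardins. I would start from the basic energy identity
\[
\tfrac12\tfrac{d}{dt}\int_{\TT^2}\rho|u|^2\,dx + \int_{\TT^2}2\mu(\rho)|\cM(u)|^2\,dx=0,
\]
which gives $\sqrt{\rho}u\in L^\infty_tL^2_x$ and $\sqrt{\mu(\rho)}\nabla u\in L^2_tL^2_x$, together with the $L^\infty$ bounds on $\rho$ and $\mu(\rho)$ propagated by the continuity equation. To gain $H^1$ control I would test the momentum equation against $\pa_t u$: using $\dive u=0$ to cancel the pressure work $\int \nabla\Pi\cdot\pa_t u$, integration by parts in the viscous term, and the identity $\pa_t\mu(\rho)=-u\cdot\nabla\mu(\rho)$ obtained from the continuity equation, yields
\[
\tfrac{d}{dt}\int\mu(\rho)|\cM(u)|^2\,dx+\int\rho|\pa_t u|^2\,dx
= -\int\rho\,u\!\cdot\!\nabla u\!\cdot\!\pa_t u\,dx - \int u\!\cdot\!\nabla\mu(\rho)\,|\cM(u)|^2\,dx.
\]

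The main obstacle, and where the smallness hypothesis \eqref{desjassume} enters, is closing this identity. The convective term is controlled via the 2-D Gagliardo--Nirenberg inequalities $\|u\|_{L^4}^2\lesssim\|u\|_{L^2}\|\nabla u\|_{L^2}$ and $\|\nabla u\|_{L^4}^2\lesssim\|\nabla u\|_{L^2}\|\nabla^2 u\|_{L^2}$, which forces us to bound $\nabla u$ in $L^p$ for some $p>2$. I would view the momentum equation as a perturbed Stokes system
\[
-\Delta u+\nabla\Pi = -\rho\pa_t u-\rho\,u\!\cdot\!\nabla u+\dive\bigl(2(\mu(\rho)-1)\cM(u)\bigr),
\]
solve for $\Pi$ by taking divergence and inverting $-\Delta$, which leads exactly to the renormalization $\Pi-\cR_i\cR_j(2\mu\cM_{ij})$ appearing in the statement, and apply the Calder\'on--Zygmund bound $\|\cR f\|_{L^p}\leq C_p\|f\|_{L^p}$ whose constant $C_p$ stays bounded on any compact subinterval of $(1,\infty)$. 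This gives, schematically,
\[
\|\nabla u\|_{L^p}\leq C\bigl(\|\rho\pa_t u\|_{L^p}+\|\rho u\otimes u\|_{W^{1,p}}\bigr)+2C_p\|\mu(\rho)-1\|_{L^\infty}\|\nabla u\|_{L^p},
\]
so as long as $2C_p\e<1$ the variable-viscosity contribution can be absorbed into the left-hand side, producing precisely the range $p\in[4,p^*]$ with $1/p^*=2C\|\mu(\r_0)-1\|_{L^\infty}$ in \eqref{pstar}.

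With this $L^p$ elliptic bound, I would plug $\|\nabla u\|_{L^4}$ back into the right-hand side of the pseudo-energy identity, bound the cubic term by $\tfrac12\|\sqrt{\rho}\pa_t u\|_{L^2}^2+C\|\sqrt{\mu}\cM(u)\|_{L^2}^{2}\cdot(\hbox{a time-integrable factor})$ using Cauchy--Schwarz and the 2-D interpolations, and close by Gronwall to obtain $u\in L^\infty_tH^1$ and $\sqrt{\rho}\pa_t u\in L^2_{t,x}$. The continuity equation together with the DiPerna--Lions theory then yields the $L^\infty\cap C_tL^p$ regularity of $\rho$ and $\mu(\rho)$, while the $L^2_tL^2_x$ control of $\nabla(\Pi-\cR_i\cR_j(2\mu\cM_{ij}))$ and of $\nabla(\PP\otimes\QQ(2\mu\cM))_{ij}$ follows by differentiating the Stokes representation and observing that the renormalization is exactly designed to cancel the top-order variable-viscosity contribution.

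For the short-time improvement under $\log\mu(\r_0)\in W^{1,r}$, I would propagate $\nabla\log\mu(\rho)\in L^{\bar r}$ along the flow via the transport identity $\pa_t\nabla\log\mu+u\cdot\nabla(\nabla\log\mu)=-(\nabla u)^{T}\nabla\log\mu$, which by Gronwall requires $\nabla u\in L^1_\tau L^\infty_x$. Rewriting the momentum equation as $-\mu(\rho)\Delta u+\nabla\Pi=-\rho\pa_t u-\rho u\cdot\nabla u+2\nabla\mu(\rho)\cdot\cM(u)$ and invoking Stokes maximal regularity with nondegenerate coefficient $\mu(\rho)\geq\underline{\mu}$, the source sits in $L^2_tL^q_x$ for some $q>2$ on a short time interval, yielding $u\in L^2(]0,\tau[;H^2)$ and, by the $H^2\hookrightarrow W^{1,\infty}$ embedding in 2-D combined with the preceding $H^1$ bounds, the desired Lipschitz-in-time estimate on $\nabla u$; this in turn closes the $W^{1,\bar r}$ propagation for $\mu(\rho)$ on $[0,\tau]$.
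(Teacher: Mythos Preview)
The paper does not prove Theorem~\ref{Desj}: it is quoted as a background result of Desjardins~\cite{desj}, introduced with the sentence ``Desjardins~\cite{desj} introduced the so-called `pseudo-energy method' and proved the following interesting result.'' There is therefore no in-paper proof to compare your attempt against.

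Your sketch does follow the broad outline of Desjardins' original argument (testing the momentum equation against $\pa_t u$, then absorbing the variable-viscosity contribution via the smallness of $\|\mu(\rho_0)-1\|_{L^\infty}$ in a Riesz-transform estimate). Two of your intermediate steps are not quite right at the regularity level assumed. First, the term $\int u\cdot\nabla\mu(\rho)\,|\cM(u)|^2\,dx$ is ill-defined when $\rho_0\in L^\infty$ only; in the actual computation the derivative must land on $|\cM(u)|^2$, not on $\mu(\rho)$. Second, your perturbed-Stokes bound puts $\|\rho\,\pa_t u\|_{L^p}$ on the right-hand side, but the only control available is on $\sqrt{\rho}\,\pa_t u$ in $L^2_{t,x}$; the closing in 2-D instead interpolates $\|\na u\|_{L^p}$ between $\|\na u\|_{L^2}$ and $\|\PP\dive(\mu(\rho)\cM(u))\|_{L^2}=\|\PP(\rho D_t u)\|_{L^2}$ (compare \eqref{S2eq1}), never requiring $\pa_t u$ in $L^p$.

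It is worth noting that when the present paper proves its own $H^1$ estimate (Proposition~\ref{S2prop1} via Lemma~\ref{S2lem1}), it deliberately departs from Desjardins' scheme: the momentum equation is tested against $D_t u=\pa_t u+u\cdot\nabla u$ rather than $\pa_t u$. As the remark after Lemma~\ref{S2lem1} explains, this is simpler precisely because $D_t\mu(\rho)=0$ holds exactly, so no $\nabla\mu(\rho)$ term---real or formal---ever appears, and the pressure contribution becomes $\int\Pi\,\pa_i u^j\pa_j u^i\,dx$ rather than vanishing. Your approach is thus closer to the cited reference than to anything carried out in this paper.
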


 The solution provided by Theorem \ref{Desj} still has neither uniqueness nor regularity. However, if the initial density
 belongs to some Besov spaces with positive index
 which guarantee that the initial density is a continuous function,
 Abidi and the second author (\cite{AZ1}) and
Huang and the first author (\cite{HP14}) can prove not only the uniqueness but also the regularity of the solution provided by  Theorem \ref{Desj} in
the whole plane.\smallskip

On the other hand,  Lions proposed the following open question in
 \cite{pl}: suppose the initial density $\r_0={\bf 1}_{D}$ for some
 smooth domain $D,$ Theorem 2.1 of \cite{pl} provides at least
 one global weak solution $(\r,u)$ of \eqref{INS} such that for all
 $t\geq 0,$ $\r(t)={\bf 1}_{D(t)}$ for some set $D(t)$ with ${\rm vol}(D(t))={\rm vol}(D).$
 Then whether or not the regularity of $D$ is preserved by  time
 evolution?

 When one assumes that the  viscous coefficient is a positive constant,
  Liao and the second author \cite{LZ1,LZ2} solved the case when the system \eqref{INS} is supplemented with  the initial
density, $\r_0(x)=\eta_1{\bf 1}_{\Om_0}+\eta_2{\bf 1}_{\Om_0^c},$ for some pair of positive constants $(\eta_1,\eta_2),$ and for any
 bounded, simply connected  domain $\Om_0$ with $W^{k+2,p}(\R^2)$ ($p\in ]2,4[$) boundary regularity.   Danchin and Zhang \cite{DZX} and  Gancedo and Garcia-Juarez
  \cite{GG}  proved the propagation of $C^{k+\gamma}$ regularity
of the interface for $k=1$ or $k=2.$  Lately Danchin and Mucha \cite{DM17} proved the propagation of $C^{1+\gamma}$ regularity of density patch
which allows vacuum.

\smallskip

 The goal of this paper is first to study the global well-posedness of the 2-D inhomogeneous Navier-Stokes equations  \eqref{INS}
 with initial density having striated regularity, which in particular allows initial density to be discontinuous across some smooth
 interface.  We point out that the main difficulty to derive the $L^1$ in time Lipschitz estimate of the  velocity field
 lies in the fact that Riesz transform does not map continuously from $L^\infty$ space to $L^\infty$ space. Motivated by
 the key idea of Chemin to solve the two-dimensional vortex patch of ideal flow in \cite{Chemin91, Chemin93}, namely,  striated regularity
 can help to get
  the $L^\infty$ boundedness of
 the double Riesz transform, we derive the {\it a priori} $L^1$ in time Lipschitz  estimate of the velocity field
 under the assumption that the viscous coefficient is close enough to a positive constant in the bounded function space.

  In order to do so, let us first recall the following definition from \cite{Ch1995}:

\begin{defi}[Definition 3.3.1 of \cite{Ch1995}]\label{S1def1}
{\sl Let $X\eqdefa\bigl(X_\la\bigr)_{\la\in\Lambda}$ be a family of solenoidal vector fields on $\R^2.$
We call $X$ to be a non-degenerate family of vector fields if there holds
\beq\label{S1eq1}
I(X)\eqdefa \inf_{x\in \R^2}\sup_{\la\in\Lambda}|X_\la(x)|>0.
\eeq}
\end{defi}

Our first result  states as follows:

\begin{thm}\label{thm1}
{ Let $p\in ]2, \infty[,$  $X\eqdefa\bigl(X_\la\bigr)_{\la\in\Lambda}$  be a non-degenerate family of vector fields in $W^{1,p}(\R^2).$ Let $u_0\in \dot{H}^{-2\d}(\R^2)\cap H^1(\R^2)$
for some $\d\in ]1/p,1/2[,$
 and $\r_0\in
L^\infty(\R^2)$ with  $\r_0-1\in L^2(\R^2)$ and $\sup_{\la\in\La}\bigl\|\p_{X_\la}\mu(\r_0)\bigr\|_{L^\infty}\leq C_{\mu,X}.$ We also assume that there exist positive constants $m, M$ such that
\beq \label{S1eq7}
m\leq \r_0(x)\leq M\quad \mbox{for any} \  x\in \R^2. \eeq  Then there exists a small enough
positive constant $\e_0$ such that if
\beq \label{S1eq2}
\|\mu(\rho_0)-1\|_{L^\infty}\leq \ep_0,
\eeq \eqref{INS} has a unique global solution $u\in L^\infty(\R^+; H^1(\R^2))\cap L^1(\R^+; \mbox{Lip}(\R^2)),$ which satisfies
\beq \label{S1eq3}
\begin{split}
&\|\w{t}^{\d}u\|_{L^\infty_t(L^2)}+\|\w{t}^{\d_-}\na u\|_{L^2_t(L^2)}+\bigl\|\w{t}^{(\f{1}2+\d)_-}\na
u\bigr\|_{L^\infty_t(L^2)}\\
&+\bigl\|\w{t}^{(\f{1}2+\d)_-}u_t\bigr\|_{L^2_t(L^2)}+ \bigl\|\w{t}^{(\f{1}2+\d)_-}D_tu\bigr\|_{L^2_t(L^2)}\\
&+
\bigl\|\sqrt{t}\w{t}^{\f12\left(1+\d\right)_-}D_tu\bigr\|_{L^\infty_t(L^2)}+\bigl\|\sqrt{t}\w{t}^{\left(\f12+\d\right)_-}\na D_tu\bigr\|_{L^2_t(L^2)}\leq C_0.
\end{split}
\eeq
Moreover, there exists a non-degenerate family of vector fields $X(t)\eqdefa\left(X_\la(t)\right)_{\la\in\La}$ for any $t>0$ so that
$X_\la(0)$ coincides with $X_\la,$ $X_\la(t)\in L^\infty(\R^+; W^{1,p}(\R^2))$ for each $\la\in\La$ and
$$\sup_{\la\in\La}\bigl\|\p_{X_\la(t)}\mu(\r)\bigr\|_{L^\infty_t(L^\infty)}\leq C_{\mu,X}.$$
Here and in all that follows, we always denote $\w{t}\eqdefa \left(1+t^2\right)^{\f12},$ $\p_Xf\eqdefa X\cdot\na f,$   $D_t\eqdefa \p_t+u\cdot\na$  to be the material derivative,
 $\d_-$ to be any positive constant smaller than $\d,$ and
$C_0$ to be a positive constant depending on  $m,M, \|\r_0-1\|_{L^2},$  $\|u_0\|_{\dot H^{-2\d}},$ and $\|u_0\|_{H^1}.$
}
\end{thm}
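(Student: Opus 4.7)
The plan is a bootstrap argument applied to a smooth approximation of the initial data. One first regularizes $(u_0,\r_0),$ obtains local-in-time strong solvability in the framework of Theorem \ref{Desj}, and then establishes uniform a priori estimates in the time-weighted norms appearing in \eqref{S1eq3}, together with propagation of the striated regularity of $\mu(\r).$ The closing mechanism is the smallness \eqref{S1eq2}, which simultaneously controls the variable-viscosity contribution to the pressure commutator and controls the log-loss appearing in Chemin's $L^\infty$ bound for the double Riesz transform.

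\textbf{A priori estimates on $u.$} I would begin by upgrading Desjardins' pseudo-energy estimate to $\R^2:$ writing $\mu(\r)=1+(\mu(\r)-1)$ in the momentum equation and using \eqref{S1eq2} to absorb the non-constant part of the viscosity in the $[\mu(\r)-1,\mathbb{P}]$ commutator, one gets $u\in L^\infty_t(H^1)$ and $\sqrt{\r}u_t\in L^2_t(L^2).$ The time-decay weight $\w{t}^{\d}$ comes from the assumption $u_0\in \dH^{-2\d}:$ testing the momentum equation against $\w{t}^{2\d}u$ and comparing with the heat-kernel decay $\|e^{t\D}u_0\|_{L^2}\lesssim\w{t}^{-\d}\|u_0\|_{\dH^{-2\d}}$ yields the first two lines of \eqref{S1eq3}. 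The higher-order bounds involving $D_tu$ and $\na D_tu$ are then obtained by applying $D_t$ to the momentum equation and testing with $D_tu,$ handling the commutator $[D_t,\dive(2\mu(\r)\cM(\cdot))]$ by integration by parts and closing through \eqref{S1eq2} together with the $H^1$ bound on $u.$

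\textbf{Striated regularity and the Lipschitz bound.} Next I would define $X_\la(t)$ by $\p_t X_\la+u\cdot\na X_\la=X_\la\cdot\na u.$ Combining with $\p_t\r+u\cdot\na\r=0$ shows that $\p_{X_\la(t)}\mu(\r)$ itself is transported by $u,$ so $\|\p_{X_\la(t)}\mu(\r)\|_{L^\infty}$ is conserved; meanwhile $\|X_\la(t)\|_{W^{1,p}}$ and the non-degeneracy $I(X(t))$ are controlled by $\exp(C\|\na u\|_{L^1_t(L^\infty)}).$ Thus the whole bootstrap reduces to proving the Lipschitz bound $\na u\in L^1_t(L^\infty),$ for which I would decompose $\na u$ dyadically: low frequencies are handled by the $L^2$-estimates of the previous step, while high frequencies require $L^\infty$-control on $\na\Pi,$ obtained from the elliptic equation $-\dive(\r^{-1}\na\Pi)=\dive(u\cdot\na u)-\dive(\r^{-1}\dive(2\mu(\r)\cM(u))).$ Its leading part is a double Riesz transform of a bounded tensor, and Chemin's lemma from \cite{Chemin91,Chemin93} bounds $\|\cR_i\cR_j f\|_{L^\infty}$ by $\|f\|_{L^\infty}\log\bigl(e+\|X(t)\|_{W^{1,p}}/I(X(t))\bigr)$ whenever $\p_X f\in L^\infty;$ an Osgood-type inequality then closes the Lipschitz estimate.

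\textbf{Passage to the limit, uniqueness, and main obstacle.} With uniform bounds on \eqref{S1eq3} along the approximations in hand, standard Aubin--Lions compactness yields a global strong solution, and \eqref{S1eq3} is preserved by lower semicontinuity. Uniqueness is proved by comparing two solutions in Lagrangian coordinates: the density becomes a purely transported object, the $L^1_t$-Lipschitz bound on $u$ controls the flow map, and the difference equation closes by an $L^2$ Gronwall argument. The chief obstacle is the circular coupling between the propagation of the $W^{1,p}$-norm of $X_\la(t),$ which demands the Lipschitz bound, and Chemin's $L^\infty$ bound for the Riesz transform, which is only useful if the log-factor it produces appears with a coefficient small enough to permit an Osgood closure; this smallness is precisely what the hypothesis \eqref{S1eq2} supplies through the representation of the pressure in terms of $\mu(\r)-1.$
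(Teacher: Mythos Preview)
Your overall architecture matches the paper's: mollify the data, derive the time-weighted a priori estimates \eqref{S1eq3} (the paper's Propositions \ref{S2prop1}--\ref{S3prop1}), propagate the striated structure to get $\na u\in L^1_t(L^\infty)$, then pass to the limit and invoke \cite{AZ1} for uniqueness. Two steps in your sketch, however, hide the actual difficulty and as written would not close.

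\textbf{Propagation of $\|\na X_\la\|_{L^p}$.} You assert that $\|X_\la(t)\|_{W^{1,p}}$ is controlled by $\exp\bigl(C\|\na u\|_{L^1_t(L^\infty)}\bigr)$. This is true for $\|X_\la\|_{L^\infty}$ and for $I(X(t))$, but \emph{not} for $\|\na X_\la\|_{L^p}$: differentiating \eqref{Xtrans} gives
\[
\p_t\na X_\la+u\cdot\na(\na X_\la)=-\na u\cdot\na X_\la+\na u_{X_\la},\qquad u_{X_\la}\eqdefa X_\la\cdot\na u,
\]
so the Gronwall bound produces a source term $\int_0^t\|\na u_{X_\la}\|_{L^p}\,dt'$ that must be estimated independently. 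In the paper this is Corollary \ref{S9col1}: one writes the tangential system \eqref{XLINS} for $u_{X_\la}$, isolates the pressure piece $\Pi_2$, and bounds $\|\na u_{X_\la}\|_{L^p}$ in terms of $\|D_tu\|_{L^2}^{2/p}\|\na D_tu\|_{L^2}^{1-2/p}$ and $\|\na X_\la\|_{L^p}$ itself. This is precisely why the estimate on $\sqrt{t}\,\na D_tu$ in \eqref{S3eq6} is needed, and why the $\na X_\la$ bound and the Lipschitz bound are intertwined through a bootstrap (the time $T^\star$ in \eqref{S9eq11as}) rather than a direct Gronwall.

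\textbf{The Chemin-type estimate and the closing.} The statement you quote---$\|\cR_i\cR_j f\|_{L^\infty}\lesssim\|f\|_{L^\infty}\log(\dots)$ whenever $\p_Xf\in L^\infty$---is not usable here, because for $f=(\mu(\rho)-1)\cM(u)$ one has $\p_Xf=\p_X\mu(\rho)\,\cM(u)+(\mu(\rho)-1)\p_X\cM(u)$, and the second term involves $\na u_X$, which is only in $L^p$, not $L^\infty$. The paper therefore proves a $W^{1,p}$ variant of Chemin's lemma (Proposition \ref{S9prop1}) that needs only $\p_{X_\la}g\in L^p$; the logarithm is then traded for a small power loss (Remark \ref{S9rmk1}). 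Correspondingly, the closing is \emph{not} an Osgood argument: in the vortex-patch problem the coefficient in front of the log is the conserved quantity $\|\omega\|_{L^\infty}$, which is what makes Osgood work. Here the corresponding coefficient is $\|\mu(\rho_0)-1\|_{L^\infty}\leq\e_0$, so the paper absorbs the dangerous term $\e_0(1+\|\na X_\la\|_{L^p})\|\na u\|_{L^\infty}$ directly into the left side on the bootstrap interval $[0,T^\star]$, shows $\|\na u\|_{L^1_t(L^\infty)}\leq\frak{N}$ there, feeds this back into the $\na X_\la$ estimate to get $\|\na X_\la\|_{L^p}\leq\frak{M}/2$, and concludes $T^\star=T^\ast$.
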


As an application of the above result, we can prove the propagation of regularity for the interface between
fluids with different densities. More precisely, let  $\Omega_0$ be a simply connected $H^3(\R^2),$
   bounded domain. Let $f_0\in
H^3(\R^2)$ such that $ \p\Omega_0=f_0^{-1}(\{0\})$ and $\nabla
f_0 $ does not vanish on $\p\Omega_0. $ Then we can parametrize
$\p\Omega_0$ as
\beq\label{S1eq5} \gamma_0: {\Bbb S}^1\mapsto \p\Omega_0
\hbox{ via } s\mapsto \gamma_0(s) \hbox{ with
}\p_s\gamma_0(s)=\nabla^\bot f_0(\gamma_0(s)). \eeq
Let us denote
\beno
\begin{split}
\Om_\e\eqdefa \{ x\in\R^2\ |\ &dist(x,\p\Om_0)<\e\ \},\quad \Om_\e^+\eqdefa \{ x\in\R^2\setminus\Om_0 |\ dist(x,\p\Om_0)>\e\ \},\\
&\quad \andf \Om_\e^-\eqdefa \{ x\in\Om_0 |\ dist(x,\p\Om_0)>\e\ \}.
\end{split}
\eeno
We now take $\e$ sufficiently small so that $|\na f_0(x)|\geq c_0$ for $x\in \Om_{2\e}.$
Let  $\chi_\e^+$ and $\chi_\e^-$ be smooth cut-off functions so that $$\chi_\e^+(x)=\left\{
\begin{array}
{l}  0, \ x\in \Om_\e\cup\Om_0,\\
\displaystyle  1, \ x\in \Om_{2\e}^+,
\end{array}
\right. \andf \chi_\e^-(x)=\left\{
\begin{array}
{l}  0, \ x\in \R^2\setminus\Om_\e^-,\\
\displaystyle  1, \ x\in \Om_{2\e}^-.
\end{array}
\right.$$
We denote
\beq \label{S1eq8}
X_0\eqdefa \na^\perp f_0,\quad X_1\eqdefa\na^\perp\left(x_1\chi_\e^+\right), \andf X_2\eqdefa\na^\perp\left(x_1\chi_\e^-\right).
\eeq
For any $\eta_1,\eta_2>0,$  we take the
initial density $\r_0$ and the initial velocity $u_0$ of
\eqref{INS} as  \beq\label{S1eq6}
\begin{split}
 &  {\rho_0}(x)={\eta_1}{\bf 1}_{\Om_0}(x)+\eta_2{\bf 1}_{\Om_0^c}(x),
   \  u_0\in\dot{H}^{-2\d}(\R^2)\cap
H^1(\R^2) \with \p_{X_\la}u_0\in \dot H^1(\R^2)
\end{split}\eeq
for some $\d\in ]1/3,1/2[$ and $\la=0,1,2.$

\smallbreak

\begin{thm}\label{thm2}
{\sl Let the initial data $(\r_0, u_0)$ be given by \eqref{S1eq5} and \eqref{S1eq6}
for some pair of positive constants $(\eta_1,\eta_2).$ Then under the smallness \eqref{S1eq2}, \eqref{INS} has a unique global
solution $(\r, u)$ such that $\r(t,x)={\eta_1}{\bf 1}_{\Om(t)}(x)+\eta_2{\bf
1}_{\Om(t)^c}(x)$,  with $\Om(t)$  being a bounded,  simply connected  $H^3(\R^2)$ domain for any $t>0.$
}
\end{thm}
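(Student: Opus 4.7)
The plan is to verify that the data $(\rho_0,u_0)$ given by \eqref{S1eq5}--\eqref{S1eq6} fit exactly into the framework of Theorem~\ref{thm1}, then to transport the patch along the flow map of the resulting Lipschitz velocity field, and finally to bootstrap the $W^{1,p}$ striated regularity furnished by Theorem~\ref{thm1} up to $H^2$ regularity of the tangent vector field, which in turn gives $H^3$ regularity of the interface.

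\smallskip
\noindent\textbf{Step 1 (Applying Theorem~\ref{thm1}).} First I would check that $X=(X_0,X_1,X_2)$ defined in \eqref{S1eq8} is a non-degenerate family in $W^{1,p}(\R^2)$. Non-degeneracy is immediate from the construction: on the tubular neighbourhood $\Omega_{2\varepsilon}$ we have $|X_0(x)|=|\nabla f_0(x)|\geq c_0>0$, and outside $\Omega_\varepsilon$ either $X_1$ or $X_2$ equals $(0,1)$. Since $f_0\in H^3(\R^2)$ and the cut-offs $\chi_\varepsilon^{\pm}$ are smooth with compact support, each $X_\lambda$ lies in $H^2\hookrightarrow W^{1,p}$ for any $p<\infty$. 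Crucially, $\partial_{X_\lambda}\mu(\rho_0)\equiv 0$ for every $\lambda\in\{0,1,2\}$: for $\lambda=0$ because $X_0=\nabla^\perp f_0$ is tangent to $\partial\Omega_0$ while $\nabla\mu(\rho_0)$ is a measure normal to $\partial\Omega_0$; for $\lambda=1,2$ because $X_1$ and $X_2$ are supported away from $\partial\Omega_0$. Hence $C_{\mu,X}=0$ and Theorem~\ref{thm1} applies, producing a unique global solution $u\in L^\infty(\R^+;H^1)\cap L^1(\R^+;\mathrm{Lip})$ together with a non-degenerate transported family $X(t)\in L^\infty(\R^+;W^{1,p})$ with $\partial_{X_\lambda(t)}\mu(\rho)\in L^\infty_t(L^\infty)$.

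\smallskip
\noindent\textbf{Step 2 (Patch structure).} Since $u\in L^1(\R^+;\mathrm{Lip})$, the flow map $\psi(t,\cdot)$ of $u$ is a volume-preserving bi-Lipschitz homeomorphism of $\R^2$. The continuity equation then yields $\rho(t,x)=\rho_0(\psi^{-1}(t,x))$, so that
$\rho(t,x)=\eta_1\mathbf{1}_{\Omega(t)}(x)+\eta_2\mathbf{1}_{\Omega(t)^c}(x)$ with $\Omega(t)\eqdef\psi(t,\Omega_0)$ bounded and simply connected. Letting $f(t,x)$ solve $\partial_t f+u\cdot\nabla f=0$ with $f|_{t=0}=f_0$, one has $\partial\Omega(t)=f(t)^{-1}(\{0\})$; a direct computation identifies the transported vector field from Theorem~\ref{thm1} as $X_0(t)=\nabla^\perp f(t)$, and the curve $\gamma(t,s)\eqdef\psi(t,\gamma_0(s))$ parametrises $\partial\Omega(t)$ with $\partial_s\gamma(t,s)=X_0(t,\gamma(t,s))$. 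Thus $H^3$ regularity of $\partial\Omega(t)$ reduces to $X_0(t)\in H^2$ in a neighbourhood of the interface.

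\smallskip
\noindent\textbf{Step 3 (Upgrading striated regularity).} Theorem~\ref{thm1} only delivers $X_\lambda(t)\in W^{1,p}$, so the heart of the argument is to iterate the striated estimates once more, using the extra assumption $\partial_{X_\lambda}u_0\in\dot H^1$ from \eqref{S1eq6}. The strategy is to apply $\partial_{X_\lambda(t)}$ to the momentum equation and run an $\dot H^1$ energy estimate on $\partial_{X_\lambda(t)}u$. The commutator $[\partial_{X_\lambda(t)},\,\dive(2\mu(\rho)\cM(\cdot))]$ produces source terms involving $\partial_{X_\lambda(t)}\mu(\rho)\,\nabla^2 u$ which are bounded via the $L^\infty$ control on $\partial_{X_\lambda(t)}\mu(\rho)$ from Step~1, while the pressure term is handled by the Chemin-type striated $L^\infty$ bound on the double Riesz transform, combined with the non-degeneracy of $X(t)$ and its $W^{1,p}$ control. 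The smallness \eqref{S1eq2} is used once more to absorb the top-order perturbative contributions. Feeding the resulting bound on $\partial_{X_\lambda(t)}u$ in $L^\infty_t(\dot H^1)\cap L^2_t(\dot H^2)$ into the transport equation $\partial_t X_\lambda+u\cdot\nabla X_\lambda=X_\lambda\cdot\nabla u$ and performing an $\dot H^1$ energy estimate on $\nabla X_\lambda(t)$ then propagates $X_\lambda(t)\in L^\infty(\R^+;H^2_{\mathrm{loc}})$. Non-degeneracy converts this into genuine $H^2$ control of $X_0(t)$ near the interface, which gives $\gamma(t,\cdot)\in H^3(\mathbb S^1)$ and hence the claimed $H^3$ regularity of $\partial\Omega(t)$.

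\smallskip
\noindent\textbf{Main obstacle.} The delicate point is Step~3: closing the second-level striated energy estimate in the variable-viscosity setting. The commutator between $\partial_{X_\lambda(t)}$ and $\dive(2\mu(\rho)\cM(\cdot))$ generates a term that, unlike in the constant-viscosity case \cite{LZ1,LZ2}, cannot be removed by integration by parts; it must be absorbed using the smallness of $\mu(\rho_0)-1$, and the corresponding pressure contribution forces one to bound a double Riesz transform in $L^\infty$ --- precisely the mechanism that motivated the striated approach in the proof of Theorem~\ref{thm1} and that must now be applied one level higher in regularity.
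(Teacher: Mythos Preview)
Your Steps~1 and~2 are correct and match the paper's argument essentially verbatim.

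The gap is in Step~3. You propose to show $X_\lambda(t)\in H^2$ by first obtaining $u_{X_\lambda}\in L^\infty_t(\dot H^1)\cap L^2_t(\dot H^2)$ and then running an $\dot H^1$ estimate on $\nabla X_\lambda$. Neither piece closes when $\mu(\rho)$ is merely bounded and discontinuous. Differentiating the transport equation twice gives
\[
D_t\nabla^2 X_\lambda=-\nabla^2 u\cdot\nabla X_\lambda-2\nabla u\cdot\nabla^2 X_\lambda+\nabla^2 u_{X_\lambda},
\]
and the term $\nabla^2 u\cdot\nabla X_\lambda$ is uncontrollable: with $\mu(\rho)\in L^\infty$ the Stokes-type estimate yields only $\nabla u\in L^p$ (see \eqref{S2eq1}), never $\nabla^2 u\in L^q$. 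For the same reason the $\dot H^1$ energy estimate on $u_{X_\lambda}$ gives $\nabla u_{X_\lambda}\in L^\infty_t(L^2)$ and $D_t u_{X_\lambda}\in L^2_t(L^2)$ (this is Proposition~\ref{S4prop1}), but \emph{not} $\nabla^2 u_{X_\lambda}\in L^2_t(L^2)$; the elliptic regularity step you would need fails for the operator $\dive(2\mu(\rho)\cM(\cdot))$ with jump coefficients.

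The paper avoids both obstructions by replacing the full second derivative by a second \emph{tangential} derivative: instead of $\nabla^2 X_\lambda$ one controls $\nabla\p_{X_\lambda}X_\lambda$ in $L^2$. Since $D_t\p_{X_\lambda}X_\lambda=\p_{X_\lambda}^2 u$, this reduces to bounding $\|\nabla\p_{X_\lambda}^2 u\|_{L^2}$, which is done in Lemma~\ref{S7prop1} via the representation \eqref{S4eq3p} and commutator estimates; the bad term $\nabla^2 u$ never appears because every derivative beyond the first is taken along $X_\lambda$, and $\p_{X_\lambda}\mu(\rho)$ is bounded. Feeding this into Gronwall (Proposition~\ref{S0prop1}) and using the $L^1_t(L^\infty)$ bound on $\Pi_2$ obtained from \eqref{S0eq5} gives $\nabla\p_{X_\lambda}X_\lambda\in L^\infty_t(L^2)$. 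This is exactly what is needed for the interface: since $\p_s\gamma(t,s)=X_0(t,\gamma(t,s))$ one has $\p_s^2\gamma=\p_{X_0}X_0\big|_{\gamma}$, and $H^1$ control of $\p_{X_0}X_0$ in $\R^2$ yields $\gamma\in H^3(\mathbb S^1)$. So the correct iteration is ``striated once more'' rather than ``one more full derivative''.
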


\begin{rmk} \begin{itemize}
\item Theorem \ref{thm2} presents an example that Theorem \ref{thm1} works for initial density being discontinuous across a smooth
interface. Thus Theorem \ref{thm1} improves the previous results in \cite{AZ1, HP14};

\item In fact, if $f_0\in W^{2,p}(\R^2)$ for some $p\in ]2,\infty[$ and without the additional assumption that $\p_{X_\la}u_0\in H^1(\R^2),$ we can prove that the
time-evolved interface
still belongs to the class of $W^{2,p}(\R^2)$ for any $t>0;$

\item We may also propagate higher order regularities of the interface with more striated regularities on the initial data.
   For simplicity, we shall not pursue on this direction here.

\item The idea to prove the propagation of boundary regularity of the interface comes from \cite{Chemin91, Ch1995}.
   One may check \cite{LZ2} for more references on this topic;
\end{itemize}
\end{rmk}

Let us end this section with the notations we shall use in the following context:

\no{\bf Notations:} Let $A, B$ be two operators, we
denote $[A,B]\eqdefa AB-BA,$ the commutator between $A$ and $B$. For
$a\lesssim b$, we mean that there is a uniform constant $C,$ which
may be different on different lines, such that $a\leq Cb$. We
denote by $(a|b)$ (or $\int_{\R^2} a | b \,dx$) the $L^2(\R^2)$ inner product
of $a$ and $b.$ We shall also
 use Einstein summation notation for repeated indices. Let $a>0,$ we always denote $a_-$ to be any positive constant smaller than $a.$

\setcounter{equation}{0}
\section {Outline of the proof}
\label{structureproof}

In this section, we shall sketch the proof of Theorems \ref{thm1} and \ref{thm2}.

Section \ref{Sect3} is devoted to the proof of the following basic energy estimates for \eqref{INS}:

\begin{prop}\label{S2prop1}
{\sl Let $u$ be a smooth enough solution of \eqref{INS} on $[0,T^\ast[.$ Then under the smallness
condition \eqref{S1eq2}, for any $t<T^\ast,$ we have
\beq\label{ener2}\begin{split} \|&u\|_{L^\infty_t(L^2)}^2
+ \|\na u\|_{L^2_t(L^2)}^2 \leq C
\|u_0\|_{L^2}^2, \end{split}\eeq
and
\beq\label{ener1}\begin{split}
\|\na u\|_{L^\infty_t(L^2)}^2 &+\int_0^t\int_{\R^2}\Bigl(|D_tu|^2+|u_t|^2+|\PP\dive\bigl(\mu(\rho)\cM(u)\bigr)|^2\\
&+ |\Q\dive\bigl(\mu(\rho)\cM(u)\bigr)-\na\Pi|^2\Bigr)\,dx\,dt'\leq C\|\na u_0\|_{L^2}^2\exp\left(C\|u_0\|_{L^2}^2\right). \end{split}\eeq
 }
\end{prop}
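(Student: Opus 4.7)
The plan is to follow the pseudo-energy strategy of Desjardins, adapted to the whole plane.

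For \eqref{ener2}, I would take the $L^2(\R^2)$ inner product of the momentum equation in \eqref{INS} with $u,$ and combine the result with the transport equation $\p_t\r+\dive(\r u)=0$ and with $\dive\,u=0$ to obtain
\beq
\f12\f{d}{dt}\int_{\R^2}\r|u|^2\,dx+\int_{\R^2}2\mu(\r)|\cM(u)|^2\,dx=0.
\eeq
The smallness \eqref{S1eq2} together with the maximum principle $m\leq\r\leq M$ preserved by the transport of $\r$ yields $\mu(\r)\geq 1-\ep_0\geq 1/2,$ while for divergence-free $u$ on $\R^2$ a short integration by parts gives $\int|\cM(u)|^2\,dx=\f12\int|\na u|^2\,dx.$ Time integration then delivers \eqref{ener2}.

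For \eqref{ener1}, I would test the momentum equation against $\p_tu$ in $L^2(\R^2).$ The pressure term drops out because $\dive\,\p_tu=0,$ and integration by parts on the viscous contribution combined with $\p_t\mu(\r)=-u\cdot\na\mu(\r)$ produces the identity
\beq\label{pseudoid}
\f{d}{dt}\int\mu(\r)|\cM(u)|^2\,dx+\int\r|\p_tu|^2\,dx=\int\mu(\r)\,u\cdot\na|\cM(u)|^2\,dx-\int\r(u\cdot\na u)\cdot\p_tu\,dx.
\eeq
I would then estimate the two cubic terms on the right. Young's inequality controls the second up to $\f14\int\r|\p_tu|^2\,dx+C\int\r|u\cdot\na u|^2\,dx,$ while the first, after integration by parts, is bounded by $C\|u\|_{L^4}\|\na u\|_{L^4}\|\na^2 u\|_{L^2};$ the two-dimensional Gagliardo-Nirenberg inequality $\|f\|_{L^4}^2\lesssim\|f\|_{L^2}\|\na f\|_{L^2}$ reduces both to products of $\|u\|_{L^2},\|\na u\|_{L^2},$ and $\|\na^2 u\|_{L^2}.$

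To close the estimate, I would control $\|\na^2u\|_{L^2}$ and $\|\na\Pi\|_{L^2}$ by the following elliptic step. Applying $\PP$ to the momentum equation and using $\PP\Delta u=\Delta u$ (valid since $\dive\,u=0$) gives $\Delta u=\PP(\r D_tu)-2\PP\dive\bigl((\mu(\r)-1)\cM(u)\bigr),$ and the smallness hypothesis \eqref{S1eq2} permits absorption of the perturbative piece to yield $\|\na^2u\|_{L^2}+\|\na\Pi\|_{L^2}\lesssim \|\r D_tu\|_{L^2}+\ep_0\|\na u\|_{L^2}.$ Substituting this back into \eqref{pseudoid}, using Young's inequality to absorb small multiples of $\|\p_tu\|_{L^2}^2$ and $\|\na^2u\|_{L^2}^2$ into the dissipation, and invoking Gronwall with the $L^2_tL^2_x$-bound on $\na u$ coming from \eqref{ener2}, produces the exponential factor $\exp(C\|u_0\|_{L^2}^2)$ in the desired estimate.

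Finally, the bounds on the projected viscous terms in \eqref{ener1} are immediate consequences of the projected momentum equations $2\PP\dive(\mu(\r)\cM(u))=\PP(\r D_tu)$ and $2\Q\dive(\mu(\r)\cM(u))-2\na\Pi=\Q(\r D_tu),$ both controlled in $L^2$ by $M\|D_tu\|_{L^2};$ the bound on $\|u_t\|_{L^2}$ follows from $u_t=D_tu-u\cdot\na u$ together with $\|u\cdot\na u\|_{L^2}\leq\|u\|_{L^4}\|\na u\|_{L^4}.$ The hardest point is precisely the elliptic step: since $\mu(\r)$ lies only in $L^\infty,$ standard Stokes regularity does not directly yield $\|\na^2u\|_{L^2}\lesssim\|\r D_tu\|_{L^2},$ and the smallness hypothesis \eqref{S1eq2} is what enables the perturbative bootstrap to close.
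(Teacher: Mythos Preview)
Your argument for \eqref{ener2} matches the paper. For \eqref{ener1}, however, your approach differs from the paper's and contains a genuine gap.

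The gap is in your elliptic step. You claim that from $\Delta u=\PP(\r D_tu)-2\PP\dive\bigl((\mu(\r)-1)\cM(u)\bigr)$ the smallness of $\|\mu(\r)-1\|_{L^\infty}$ allows absorption of the perturbation to yield $\|\na^2u\|_{L^2}\lesssim \|\r D_tu\|_{L^2}$. But $\|\PP\dive\bigl((\mu(\r)-1)\cM(u)\bigr)\|_{L^2}\leq \|\dive\bigl((\mu(\r)-1)\cM(u)\bigr)\|_{L^2}$, and expanding the divergence produces $(\mu(\r)-1)\dive\cM(u)+\na\mu(\r)\cdot\cM(u)$. The first piece is $O(\e_0\|\na^2u\|_{L^2})$ and absorbable; the second involves $\na\mu(\r)$, which is not controlled in any Lebesgue space---the hypotheses give only $\mu(\r)\in L^\infty$, and the smallness condition \eqref{S1eq2} concerns $\mu(\r)-1$ in $L^\infty$, not its gradient. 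So the absorption does not close, and with it your control of the cubic term $\int\mu(\r)u\cdot\na|\cM(u)|^2\,dx$ forced by your identity \eqref{pseudoid} collapses.

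The paper avoids this obstacle by testing the momentum equation against $D_tu$ rather than $\p_tu$ (see Lemma~\ref{S2lem1} and the remark following it). Because $D_t\mu(\r)=0$, the viscous contribution integrates by parts cleanly to $\f{d}{dt}\int\mu(\r)|\cM(u)|^2\,dx$ plus a term controlled by $\|\na u\|_{L^\infty}\|\na u\|_{L^2}^2$, with no residual requiring $\na^2u$. The only nontrivial term left is the pressure contribution $-\int\na\Pi\cdot D_tu\,dx=\int\Pi\,\p_iu^j\p_ju^i\,dx$ (using $\dive D_tu=\p_iu^j\p_ju^i$). The paper splits $\Pi=(-\Delta)^{-1}\dive(\r D_tu)-(-\Delta)^{-1}\dive\dive(2\mu(\r)\cM(u))$ and handles the first piece by the $\mathscr{H}^1$--$BMO$ duality together with the div--curl structure of $\na u^i\cdot\p_iu$, and the second by boundedness of the double Riesz transform on $L^4$. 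This yields $\bigl|\int\Pi\,\p_iu^j\p_ju^i\,dx\bigr|\leq C\|\sqrt{\r}D_tu\|_{L^2}\|\na u\|_{L^2}^2$, which needs only $\|\na u\|_{L^4}$---and \emph{that} quantity the perturbative argument does control, via $\|\na u\|_{L^4}\leq C\|\na u\|_{L^2}^{1/2}\|\sqrt{\r}D_tu\|_{L^2}^{1/2}$ (equation \eqref{B.4b}). Gronwall then closes exactly as you described in your final paragraph.
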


With Proposition \ref{S2prop1}, we deduce from Proposition 2.2 of \cite{AZ1} that

\begin{prop}\label{S2prop2}
{\sl If we assume in addition that $\r_0-1\in L^2(\R^2),$
$u_0\in \dot H^{-2\d}(\R^2)$ for $\d\in ]0, 1/2[,$ then for any $t\in [0,T^\ast[,$  we have  \beq\label{p.2a}
\begin{split} \|\w{t}^{\d}u\|_{L^\infty_t(L^2)}&+\|\w{t}^{\d_-}\na u\|_{L^2_t(L^2)}+\|\w{t}^{(\f{1}2+\d)_-}\na
u\|_{L^\infty_t(L^2)}\\
&+\|\w{t}^{(\f{1}2+\d)_-}u_t\|_{L^2_t(L^2)}+ \|\w{t}^{(\f{1}2+\d)_-}D_tu\|_{L^2_t(L^2)}\leq
C_0.
\end{split}
\eeq
 } \end{prop}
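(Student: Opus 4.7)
The heart of the matter is to propagate the $\dH^{-2\d}$ regularity of $u$, from which all the weighted estimates follow by interpolation and a re-examination of the arguments of Proposition \ref{S2prop1}.

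First, I would rewrite the momentum equation of \eqref{INS} in perturbative form,
\begin{equation*}
\p_t u - \D u + \na \Pi = \dive\bigl((2\mu(\r)-1)\cM(u)\bigr) - u\cdot\na u - (\r-1)D_t u,
\end{equation*}
apply $(-\D)^{-\d}\PP$, and take the $L^2$ inner product with $(-\D)^{-\d}\PP u$. The nonlinear right-hand side is handled via product estimates of the form $\|fg\|_{\dH^{-2\d}} \lesssim \|f\|_{L^2}\|g\|_{\dH^{1-2\d}}$, which close in $\R^2$ for $\d<1/2$ (since this is precisely the range in which the dual Sobolev embedding $\dH^{2\d}\hookrightarrow L^{2/(1-2\d)}$ is available): one exploits the smallness $\|\mu(\r)-1\|_{L^\infty}\leq\e_0$ for the viscous perturbation, the fact that the transport equation for $\r$ propagates $\r_0-1\in L^2$ for the density-correction term $(\r-1)D_t u$, and the energy bounds of Proposition \ref{S2prop1}. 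The pressure contribution requires an additional elliptic estimate based on $\dive(\r^{-1}\na\Pi)=\dive(\text{forcing})$. Combining these inputs yields $\sup_{t<T^\ast}\|u(t)\|_{\dH^{-2\d}} \leq C_0$.

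With this uniform negative-regularity bound in hand, the weighted $L^2$ decay follows by a Schonbek-type argument. From the basic energy identity $\f{d}{dt}\int \r|u|^2\,dx + 2\int \mu(\r)|\cM(u)|^2\,dx = 0$ combined with the interpolation $\|u\|_{L^2} \leq \|u\|_{\dH^{-2\d}}^{1/(1+2\d)}\|\na u\|_{L^2}^{2\d/(1+2\d)}$, multiplication by $\w{t}^{2\d}$ followed by Gronwall's lemma delivers $\|\w{t}^{\d}u\|_{L^\infty_t(L^2)} \lesssim C_0$ and $\|\w{t}^{\d_-}\na u\|_{L^2_t(L^2)}\lesssim C_0$, the small $\d_-$ loss absorbing the time-derivative of the weight. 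To upgrade to the $\w{t}^{(\f12+\d)_-}$ weights on $\na u$, $u_t$, and $D_t u$, I would then revisit the derivation of \eqref{ener1} with the additional time weight $\w{t}^{(1+2\d)_-}$: the left-hand side already contains $\f{d}{dt}\|\sqrt{\mu(\r)}\cM(u)\|_{L^2}^2$, $\|\sqrt{\r}\,D_t u\|_{L^2}^2$ and the projected viscous terms, while the quadratic-in-$\na u$ right-hand side is absorbed using the weighted $L^2_t L^2$ control of $\na u$ obtained in the previous step. The bound on $u_t$ then follows from $u_t = D_t u - u\cdot\na u$ together with Sobolev embedding in $\R^2$.

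The principal obstacle is the first step: propagating a negative-regularity norm in the presence of a non-smooth density $\r$ requires careful paraproduct analysis to handle the commutator of $(-\D)^{-\d}$ with multiplication by $\r$ (or by $\mu(\r)$), and the pressure contribution only barely closes in the range $\d\in\,]0,1/2[$. These are exactly the technical points resolved in Proposition 2.2 of \cite{AZ1}, which the authors invoke here; the subsequent weighted bounds then reduce to bookkeeping with Gronwall's lemma.
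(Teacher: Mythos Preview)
Your outline is correct and mirrors the paper's approach: the paper simply invokes Proposition~2.2 of \cite{AZ1} for the first four terms of \eqref{p.2a} (whose content is precisely the $\dH^{-2\d}$ propagation and Schonbek-type decay you sketch), and then appends the weighted $D_tu$ bound as a corollary. The only cosmetic difference is the order of the last two estimates: the paper derives the weighted $D_tu$ bound \emph{from} the already-established weighted $u_t$ bound via the inequality $\|D_tu\|_{L^2}\leq C\bigl(\|u_t\|_{L^2}+\|u\|_{L^2}\|\na u\|_{L^2}^2\bigr)$ (obtained using \eqref{B.4b}), whereas you propose to obtain $D_tu$ first from the weighted version of \eqref{ener3} and then recover $u_t$; both routes are equivalent.
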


 In Section \ref{Sect4}, we shall prove the following time-weighted energy estimate for $D_tu.$

 \begin{prop}\label{S3prop1}
{\sl Under the assumptions of Proposition \ref{S2prop2} and $\mu(\r_0)\geq\f34,$ for any $t<T^\ast,$ we have
\beq \label{S3eq6}
\|\sqrt{t}\w{t}^{\f12\left(1+\d\right)_-}D_tu\|_{L^\infty_t(L^2)}+\|\sqrt{t}\w{t}^{\left(\f12+\d\right)_-}\na D_tu\|_{L^2_t(L^2)}\leq C_0.
\eeq
}
\end{prop}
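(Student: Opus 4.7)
The plan is a time-weighted $L^2$ energy estimate for the material derivative $D_tu$. Starting from the momentum equation $\rho D_tu = \dive\bigl(2\mu(\rho)\cM(u)\bigr) - \nabla\Pi$ and applying $D_t$ to both sides, I would use the transport relations $D_t\rho = 0$ and hence $D_t\mu(\rho) = 0$ to obtain
$$\rho D_t^2u = D_t\dive\bigl(2\mu\cM(u)\bigr) - D_t\nabla\Pi.$$
Testing against $D_tu$ and exploiting the conservation structure $\f12\f{d}{dt}\int\rho|D_tu|^2\,dx = \int\rho D_tu\cdot D_t^2u\,dx$, the key identity reads
$$\f12\f{d}{dt}\int\rho|D_tu|^2\,dx = \int D_tu\cdot\bigl[D_t\dive\bigl(2\mu\cM(u)\bigr) - D_t\nabla\Pi\bigr]\,dx.$$

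Next I would commute $D_t$ past $\dive$, $\nabla$ and $\cM$. The dominant contribution on the right is $\dive\bigl(2\mu\cM(D_tu)\bigr)$, which after integration by parts yields the dissipation $-\int 2\mu|\cM(D_tu)|^2\,dx$. Combined with Korn's inequality and the lower bound $\mu\geq 3/4$ (propagated along the flow from $\mu(\rho_0)\geq 3/4$ by $D_t\mu=0$), this controls a positive multiple of $\|\na D_tu\|_{L^2}^2$. The commutator errors are schematically of the form $\na u\otimes\na u$ paired against $D_tu$ or $\na D_tu$; they would be estimated through Ladyzhenskaya's inequality $\|f\|_{L^4}^2\lesssim\|f\|_{L^2}\|\na f\|_{L^2}$ together with the bounds on $\na u$, $u_t$ and $D_tu$ provided by \eqref{p.2a}, and absorbed into the dissipation.

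The pressure term is the main obstacle. I would derive the elliptic equation $\dive\bigl(\rho^{-1}\na\Pi\bigr) = \dive\bigl(\rho^{-1}\dive(2\mu\cM(u)) - u\cdot\na u\bigr)$ from $\dive u = 0$, apply $D_t$ to it, and thus obtain an elliptic equation for $D_t\Pi$ up to commutators of $D_t$ with $\dive$ and $(-\D)^{-1}$. Standard elliptic estimates then bound $\|\na D_t\Pi\|_{L^2}$ by $\|\na D_tu\|_{L^2}$ plus lower-order terms, and the smallness assumption \eqref{S1eq2} on $\|\mu(\rho_0)-1\|_{L^\infty}$ (together with $m\leq\rho\leq M$) is what allows one to close the loop by absorbing the top-order pressure contribution back into the dissipation.

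Finally, multiplying the resulting differential inequality by $t\w{t}^{(1+\d)_-}$ and integrating over $[0,t]$, the derivative of the weight produces a source $\bigl(\w{t}^{(1+\d)_-}+t^2\w{t}^{(\d-1)_-}\bigr)\|D_tu\|_{L^2}^2$, which is integrable in $s\in[0,t]$ thanks to \eqref{p.2a} (since $(1+\d)_-<1+2\d$). A Gronwall argument then yields \eqref{S3eq6}, the factor $\sqrt{t}$ in front trivializing the initial layer at $t=0$.
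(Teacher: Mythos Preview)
Your overall strategy is right---apply $D_t$ to the momentum equation, test against $D_tu$, extract the dissipation $\int 2\mu|\cM(D_tu)|^2\,dx$, and multiply by a time weight---but your handling of the pressure contains a genuine gap.

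You propose to derive an elliptic equation for $D_t\Pi$ and bound $\|\nabla D_t\Pi\|_{L^2}$ by $\|\nabla D_tu\|_{L^2}$ plus lower-order terms. This does not close. From the pressure equation $\Delta\Pi = \dive(\rho D_tu) - \dive\dive\bigl(2\mu(\rho)\cM(u)\bigr)$, applying $D_t$ produces a right-hand side whose leading term is $\dive(\rho D_t^2u)$; the resulting estimate for $\nabla D_t\Pi$ then involves $\|\rho D_t^2u\|_{L^2}$, which is precisely the quantity you are trying to bound. The smallness of $\|\mu(\rho_0)-1\|_{L^\infty}$ does not help here, because the problematic term carries a full factor of $\rho$, not $\mu-1$.

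The paper avoids this circularity by a different mechanism. Since $\dive u=0$, one has $\dive D_tu = \partial_iu^j\partial_ju^i$, so integration by parts gives
\[
-(\nabla D_t\Pi \mid D_tu) = (D_t\Pi \mid \partial_iu^j\partial_ju^i),
\]
and then the time derivative is pulled out: this equals $\dfrac{d}{dt}\displaystyle\int \Pi\,\partial_iu^j\partial_ju^i\,dx$ minus terms involving only $\Pi$ (not $D_t\Pi$) paired with $\nabla D_tu$ and $\nabla u$. The total-derivative piece is absorbed into the energy functional on the left, and the remainder is controlled exactly as in the proof of \eqref{ener1}, via the decomposition $\Pi=\Pi_1+\Pi_2$ of \eqref{b.4a} together with the $BMO$--$\mathscr{H}^1$ duality and \eqref{B.4b}. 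This yields the clean differential inequality
\[
\frac{d}{dt}\Bigl(\tfrac12\int\rho|D_tu|^2\,dx - \int\Pi\,\partial_iu^j\partial_ju^i\,dx\Bigr) + \|\nabla D_tu\|_{L^2}^2 \le C\|\nabla u\|_{L^2}^2\|\sqrt{\rho}D_tu\|_{L^2}^2,
\]
after which the weighted integration proceeds essentially as you describe (the paper in fact uses two passes, one with weight $(t-t/2)$ for the pointwise bound and one with $t\langle t\rangle^{(1+2\delta)_-}$ for the $L^2_t$ bound on $\nabla D_tu$).
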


We point out that the estimate \eqref{S3eq6} is completely new compared with the previous references \cite{AZ1, HP14}.
Part of the estimates in Propositions \ref{S2prop1} and \ref{S2prop2} were obtained in \cite{AZ1, HP14}. But the proof here will be more concise.
We also emphasize that the reason why there is no Lipschitz estimate for the velocity field in Theorem \ref{Desj} is that
Riesz transform can not map continuously from $L^\infty(\R^2)$ to $L^\infty(\R^2)$.
To overcome this difficulty, we need to appeal to the idea from \cite{Chemin91, Chemin93}. Toward this, for any
 smooth vector field $X_0$ with $\dive X_0=0,$ we define the vector field $X(t)=(X^1(t),X^2(t))$ for $t>0$ by
\beq\label{Xtrans} \left\{
\begin{array}
{l} \displaystyle \pa_t X+u\cdot\nabla X=X\cdot\nabla u=\p_Xu,\\
\displaystyle X|_{t=0}=X_0.
\end{array}
\right. \eeq
It is easy to check that
\beq \label{S3eq6a}
\begin{split}
&\p_t\dive X+u\cdot\na \dive X=0 \ \Rightarrow \ \dive X(t)=0,\\
&[\p_X; D_t]=0\with \p_X\eqdefa X\cdot\na \andf D_t\eqdefa \p_t+u\cdot \na.  \end{split} \eeq
Let us denote $\r_X\eqdefa\p_X\r, u_X\eqdefa\p_Xu$ and $\Pi_X\eqdefa \p_X\Pi.$ Then  by applying the operator $\p_X$ to \eqref{INS} and using
\eqref{S3eq6a}, we write
 \beq\label{XLINS} \left\{
\begin{array}
{l} \displaystyle \pa_t\rho_X + u\cdot\na \rho_X  = 0, \\
\displaystyle \rho\pa_t u_X + \rho u\cdot\na u_X - \dive(2\mu(\rho)\cM(u_X)) + \nabla \Pi_X = G, \\
\displaystyle \dive u_X = \p_iX^k\p_ku^i=\dive(u\cdot\na X), \\
\displaystyle (\rho_X, u_X)|_{t=0}=(\p_{X_0}\rho_0,\p_{X_0}u_0),
\end{array}
\right. \eeq
where $G=(G_1,G_2)$ and
\beq \label{S3eq7}\begin{split}
G_j=&-\r_X D_tu^j+\p_i\left(2\p_X\mu(\r)\cM_{ij}(u)\right)+\p_jX\cdot\na\Pi\\
&-\p_i\left(\mu(\r)\bigl(\p_iX\cdot\na u^j+\p_jX\cdot\na u^i\bigr)\right)
-\p_iX\cdot\na\left( 2\mu(\r)\cM_{ij}(u)\right).
\end{split}
\eeq

Observing from \eqref{Xtrans} that the estimate of $\|\na X(t)\|_{L^p}$ depends on $\|\na u_X\|_{L^p}.$
In Section \ref{Sect5}, we will find that one difficulty to derive the estimate of $\|\na u_X\|_{L^1_t(L^p)}$  lies in the fact that there is no control
 of $\na \Pi$ with $\mu(\r)$ belonging to the bounded function space. Indeed
 by applying the space divergence operator to the momentum equation
of \eqref{INS}, we obtain
\beq\label{b.4a}
\begin{aligned}
\Pi& =(-\Delta)^{-1}\dive(\rho D_tu) -(-\Delta)^{-1}\dive\dive\bigl(2\mu(\rho)\cM(u)\bigr)\eqdefa \Pi_1+\Pi_2,
\end{aligned}
\eeq
The best estimate we can expect for $\Pi_2$ is that $\Pi_2\in L^\infty(\R^2).$ In order to do so, we need the following proposition:

\begin{prop}\label{S9prop1}
{\sl Let $p\in ]2, \infty[$ and $X=\bigl(X_\la\bigr)_{\la\in\Lambda}$ be a non-degenerate family of vector fields in the sense of  Definition \ref{S1def1}.
 Then  for any $s\in ]2/p,1[,$ one has
\beq \label{S9eq4}
\begin{split}
\|&\na^2(-\D)^{-1}g\|_{L^\infty}\leq C_s\Bigl\{\|g\|_{L^p}+\|g\|_{L^\infty}+\f1{I(X)}\sup_{\la\in\Lambda}\Bigl( \bigl(\|X_\la\|_{L^\infty}\|g\|_{L^p}\bigr)^{1-\f2p}
\\
&\ \times\bigl(\|\na X_\la\|_{L^p}\|g\|_{L^\infty}+\|\p_{X_\la}g\|_{L^p}\bigr)^{\f2p}+\bigl(\|X_\la\|_{L^\infty}\|g\|_{L^p}\bigr)^{1-\f2{ps}}\bigl(\|\na X_\la\|_{L^p}\|g\|_{L^\infty}\bigr)^{\f2{ps}}\Bigr)\Bigr\},
\end{split}
\eeq with $I(X)$ being given by \eqref{S1eq1}. }
\end{prop}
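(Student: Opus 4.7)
Following Chemin's scheme for the vortex patch (\cite{Chemin91, Ch1995}), the plan is to Littlewood--Paley decompose $g=\sum_j\Delta_jg$ and to recover the missing $L^\infty$-boundedness of the double Riesz transform $T\eqdefa\nabla^2(-\Delta)^{-1}$ on high frequencies via striated regularity. For an integer $N\geq 1$ to be optimized,
$$\|Tg\|_{L^\infty}\leq\sum_{j<-N}\|\Delta_jTg\|_{L^\infty}+\sum_{|j|\leq N}\|\Delta_jTg\|_{L^\infty}+\sum_{j>N}\|\Delta_jTg\|_{L^\infty}.$$
Bernstein's inequality combined with the $L^p$-boundedness of $T$ controls the first sum by $2^{-2N/p'}\|g\|_{L^p}$, and the middle band is controlled by $\|g\|_{L^\infty}$ on each block (since $\Delta_jT$ has an integrable convolution kernel), producing the additive $\|g\|_{L^p}+\|g\|_{L^\infty}$ contributions in \eqref{S9eq4}.

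\textbf{High-frequency tail via striated regularity.} This is where $X$ enters. For each base point $x_0$ select $\lambda_0=\lambda_0(x_0)\in\Lambda$ realizing $|X_{\lambda_0}(x_0)|\geq I(X)$. The key identity is
$$\partial_{X_\lambda}(Tg)=T(\partial_{X_\lambda}g)+\partial_k[X_\lambda^k,T]g,$$
which exploits $\dive X_\lambda=0$; the commutator $[X_\lambda^k,T]$ is of Coifman--Rochberg--Weiss type, bounded on $L^p$ by $\|\nabla X_\lambda\|_{L^p}\|g\|_{L^\infty}$. Microlocally inverting $\partial_{X_{\lambda_0}}$ on the dyadic block at frequency $2^j$ (the inverse has $L^p$-operator norm of order $2^{-j}\|X_\lambda\|_{L^\infty}/I(X)$) and then applying Bernstein to pass from $L^p$ back to $L^\infty$ gives, schematically,
$$\sum_{j>N}\|\Delta_jTg\|_{L^\infty}\lesssim\frac{2^{-N(1-2/p)}}{I(X)}\|X_\lambda\|_{L^\infty}^{1-2/p}\bigl(\|\partial_{X_\lambda}g\|_{L^p}+\|\nabla X_\lambda\|_{L^p}\|g\|_{L^\infty}\bigr).$$
A variant in which $\|\partial_{X_\lambda}g\|_{L^p}$ is replaced, up to a fractional Besov loss of index $s\in(2/p,1)$, by $\|\nabla X_\lambda\|_{L^p}\|g\|_{L^\infty}$ yields the slower decay rate $2^{-N(s-2/p)}$.

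\textbf{Balancing.} Optimizing $N$ to balance the low-frequency bound $2^{2N/p}\|g\|_{L^p}$ against the first high-frequency bound (with decay exponent $1-2/p$) interpolates into $\frac{1}{I(X)}(\|X_\lambda\|_{L^\infty}\|g\|_{L^p})^{1-2/p}(\|\partial_{X_\lambda}g\|_{L^p}+\|\nabla X_\lambda\|_{L^p}\|g\|_{L^\infty})^{2/p}$, which is exactly the first striated term in \eqref{S9eq4}; matching exponents is clean because $\alpha+2/p=(1-2/p)+2/p=1$. Balancing instead against the second high-frequency bound with decay $s-2/p$ (so that $\alpha+2/p=s$) produces $\frac{1}{I(X)}(\|X_\lambda\|_{L^\infty}\|g\|_{L^p})^{1-2/(ps)}(\|\nabla X_\lambda\|_{L^p}\|g\|_{L^\infty})^{2/(ps)}$, the second striated term. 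Taking the supremum over $\lambda\in\Lambda$ and adding the low- and medium-frequency contributions assembles \eqref{S9eq4}.

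\textbf{Main obstacle.} The delicate step will be the uniform microlocal inversion of $\partial_{X_{\lambda_0}}$ on each dyadic block, with quantitative dependence on $I(X)$, $\|X_\lambda\|_{L^\infty}$, and $\|\nabla X_\lambda\|_{L^p}$ carefully tracked, and the pointwise-in-$x_0$ selection $\lambda_0(x_0)$ reconciled with uniform-in-$\lambda$ estimates (handled via a partition-of-unity argument in the parameter set $\Lambda$). Once this framework is set up, the remaining ingredients --- Bernstein's inequality, $L^p$-continuity of $T$, and the Bony paraproduct / Coifman--Rochberg--Weiss commutator estimate --- are entirely standard.
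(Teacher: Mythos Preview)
Your overall framework---Littlewood--Paley decomposition, Bernstein, Chemin-style use of striated regularity, and an optimization over a dyadic cutoff---is in the right spirit, but the step you yourself flag as the ``main obstacle'' is a genuine gap, not just a technicality.

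The operator $\partial_{X_{\lambda_0}}$ \emph{cannot} be microlocally inverted on a dyadic annulus: its (frozen-coefficient) symbol is $iX_{\lambda_0}(x_0)\cdot\xi$, which vanishes on the entire line $\{\xi\perp X_{\lambda_0}(x_0)\}$ intersecting every annulus $\{|\xi|\sim2^j\}$. No partition of unity in $\Lambda$ repairs this, since the zero set depends only on the direction of $X_{\lambda_0}(x_0)$, not on its magnitude. Consequently the claimed inverse ``of $L^p$-operator norm $2^{-j}\|X_\lambda\|_{L^\infty}/I(X)$'' does not exist, and your high-frequency tail estimate does not go through as written.

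The paper's mechanism is different and purely algebraic. At each point $x$ one has identities such as
\[
|X(x)|^2\,\partial_1^2=X^1(x)\,\partial_X\partial_1-X^2(x)\,\partial_X\partial_2+(X^2(x))^2\Delta,
\]
and analogues for $\partial_2^2$, $\partial_1\partial_2$. Applied to $(-\Delta)^{-1}g$ and divided by $|X_{\lambda_0}(x)|\geq I(X)$ on the set $U_{\lambda_0}$, these give the pointwise bound \eqref{S9eq5}, hence
\[
\|\nabla^2(-\Delta)^{-1}g\|_{L^\infty}\lesssim\|g\|_{L^\infty}+\frac{1}{I(X)}\sup_{\lambda}\bigl\|\partial_{X_\lambda}\nabla(-\Delta)^{-1}g\bigr\|_{L^\infty}.
\]
The task is thus reduced to bounding $\partial_X\sigma(D)g$ in $L^\infty$ for a symbol $\sigma$ of order $-1$ (not your $T$ of order $0$); this drop in order is exactly the derivative room that makes the estimate close. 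That bound is Lemma~\ref{S9lem1}: via Bony's decomposition, every piece except the single paraproduct $T_{\sigma(D)\partial_kg}X^k$ lies in $L^p\cap\dot W^{1,p}$ with the right constants, so $L^\infty$ follows from 2D Sobolev interpolation $\|a\|_{L^\infty}\lesssim\|a\|_{L^p}^{1-2/p}\|\nabla a\|_{L^p}^{2/p}$. The dyadic balancing over a cutoff $N$ (your final step) is used only on this one exceptional paraproduct, and that is where both interpolated striated terms in \eqref{S9eq4} originate.

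In short: replace ``invert $\partial_X$'' by ``use the pointwise algebraic identity to trade $\nabla^2(-\Delta)^{-1}$ for $\partial_X\nabla(-\Delta)^{-1}$ plus $g$'', and track that the resulting symbol is of order $-1$.
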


The proof of this proposition is motivated by Theorem 3.3.1 of \cite{Ch1995}, where the vector field belongs to some H\"older space. We shall present
the detailed proof of Proposition \ref{S9prop1} in Section \ref{Sect5}. As an application of Proposition \ref{S9prop1}, we shall prove the following estimate
of $\bigl\|\na u_{X_\la}\bigr\|_{L^p}.$

\begin{col}\label{S9col1}
{\sl Let $(\r,u)$ be a smooth enough solution of \eqref{INS} on $[0,T^\ast[$ and $\bigl(X_\la(t)\bigr)_{\la\in\Lambda}$ be a non-degenerate family of vector fields
 where $X_\la$ satisfies \eqref{Xtrans} with initial data $X_\la$ for each $\la\in\La.$ For a given positive constant $\frak{M}$ and for some $\e_1$ sufficiently small, we define
\beq \label{S9eq11as}
T^\star\eqdefa \sup\bigl\{ T<T^\ast:\ \sup_{\la\in\La}\|\na X_\la\|_{L^\infty_T(L^p)}\leq \frak{M} \andf \|\mu(\r_0)-1\|_{L^\infty}\f{\frak{M}}{I(X)}\leq \e_1\ \bigr\}.
\eeq
 Then under the assumption of \eqref{S1eq2} and \beq\label{S9eq11af}
 \sup_{\la\in\La}\bigl\|\p_{X_\la}\mu(\r_0)\bigr\|_{L^\infty}\leq C_{\mu,X},\eeq
 for any $t\in [0,T^\star],$
 we have
\beq \label{S9eq11}
\begin{split}
\sup_{\la\in\La}\|\na u_{X_\la}(t)\|_{L^p}\leq &  C\Bigl\{\|\na u\|_{L^p}\Bigl[1+\f{\sup_{\la\in\La}\|X_\la(t)\|_{L^\infty}^{1-\f2p}}{I(X(t))}\Bigr]\\
&+\bigl[\|\na u\|_{L^p}+\|D_t u\|_{L^2}^{\f2p}\|\na D_t u\|_{L^2}^{1-\f2p}\bigr]
\sup_{\la\in\La}\|X_\la(t)\|_{L^\infty}\\
&+\bigl(\|\na u\|_{L^p}
+\|\na u\|_{L^\infty}+\|D_tu\|_{L^2}\bigr)\sup_{\la\in\La}\|\na X_\la(t)\|_{L^p}\Bigr\},
\end{split}
\eeq for some positive constant $C$ depending on $C_{\mu,X}.$}
\end{col}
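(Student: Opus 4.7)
My plan is to read the second equation of \eqref{XLINS} as a constant-coefficient Stokes system with small variable-viscosity perturbation, inhomogeneous divergence, and forcing, then apply $L^p$ elliptic theory, and finally match the pieces of the right hand side with the three clusters of \eqref{S9eq11} via Proposition \ref{S9prop1}. Using $-\dive(2\mu(\r)\cM(u_{X_\la}))=-\D u_{X_\la}-\na\dive u_{X_\la}-\dive(2(\mu(\r)-1)\cM(u_{X_\la}))$, together with the identity $\dive u_{X_\la}=\p_iu^j\p_jX_\la^i$ (which follows from \eqref{S3eq6a} and $\dive u=0$), I would recast the system as
\[
-\D u_{X_\la}+\na\Pi_{X_\la}=\Phi_\la,\qquad \dive u_{X_\la}=\p_iu^j\p_jX_\la^i,
\]
with $\Phi_\la=G-\r D_tu_{X_\la}+\dive(2(\mu(\r)-1)\cM(u_{X_\la}))+\na\dive u_{X_\la}$. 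The classical $L^p$ Stokes estimate then yields $\|\na u_{X_\la}\|_{L^p}\lesssim\|\Phi_\la\|_{W^{-1,p}}+\|\dive u_{X_\la}\|_{L^p}$; the smallness \eqref{S1eq2} lets me absorb the $(\mu(\r)-1)\cM(u_{X_\la})$ contribution on the left, and the divergence term directly produces $\|\na u\|_{L^\infty}\|\na X_\la\|_{L^p}$, contributing to the third cluster of \eqref{S9eq11}.

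Next I would control $\r D_tu_{X_\la}$. The commutation $[\p_{X_\la},D_t]=0$ from \eqref{S3eq6a} gives $D_tu_{X_\la}=X_\la\cdot\na D_tu$; testing against $\phi\in W^{1,p'}$ and integrating by parts using $\dive X_\la=0$ yields $\|\r D_tu_{X_\la}\|_{W^{-1,p}}\lesssim\|X_\la\|_{L^\infty}\|D_tu\|_{L^p}$, and the 2D Gagliardo-Nirenberg inequality $\|D_tu\|_{L^p}\lesssim\|D_tu\|_{L^2}^{2/p}\|\na D_tu\|_{L^2}^{1-2/p}$ then reconstructs the second cluster. For the pieces of $G$ displayed in \eqref{S3eq7}, I would first observe that $\mu(\r)$ is transported, so $D_t\p_{X_\la}\mu(\r)=0$, and \eqref{S9eq11af} gives $\|\p_{X_\la}\mu(\r)\|_{L^\infty_t(L^\infty)}\leq C_{\mu,X}$; hence $\dive(2\p_{X_\la}\mu(\r)\cM(u))$ has $W^{-1,p}$ norm bounded by $C_{\mu,X}\|\na u\|_{L^p}$, and the remaining commutator-type terms pair $\|\na X_\la\|_{L^p}$ with $\|\na u\|_{L^\infty}$, $\|\na u\|_{L^p}$, or $\|D_tu\|_{L^2}$, reproducing the third cluster.

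The genuinely delicate term is the pressure contribution $\p_jX_\la\cdot\na\Pi$ in $G$. Splitting $\Pi=\Pi_1+\Pi_2$ as in \eqref{b.4a}, the first part $\na\Pi_1=\Q(\r D_tu)$ is Calder\'on-Zygmund-controlled in $L^p$ by $\|D_tu\|_{L^p}$ and feeds the second cluster. However $\na\Pi_2$ is a double Riesz transform of $2\mu(\r)\cM(u)$ and does \emph{not} belong to $L^\infty$ by Calder\'on-Zygmund theory alone; this is precisely where Proposition \ref{S9prop1} is decisive. Applying \eqref{S9eq4} with $g=2\mu(\r)\cM(u)$ and some $s\in ]2/p,1[$ generates exactly the factor $\|X_\la\|_{L^\infty}^{1-2/p}/I(X)$ paired with $\|\na u\|_{L^p}$ that produces the first cluster of \eqref{S9eq11}. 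The auxiliary $\|\p_{X_\la}g\|_{L^p}$ term appearing in \eqref{S9eq4} is treated via $\p_{X_\la}(\mu(\r)\cM(u))=\p_{X_\la}\mu(\r)\cdot\cM(u)+\mu(\r)\cM(u_{X_\la})+[\p_{X_\la},\cM]u$; the first summand is handled by \eqref{S9eq11af}, the commutator yields $\|\na u\|_{L^\infty}\|\na X_\la\|_{L^p}$, and the $\cM(u_{X_\la})$ piece is absorbed into the left hand side of the Stokes estimate thanks to the smallness condition \eqref{S9eq11as}.

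The main obstacle I anticipate is closing this last pressure step: verifying that Proposition \ref{S9prop1} furnishes an $L^\infty$ bound of $\na\Pi_2$ that is \emph{linear} in $\|\na u\|_{L^p}$ with the correct power $1-2/p$ on $\|X_\la\|_{L^\infty}/I(X)$, and that every auxiliary factor coming from \eqref{S9eq4} — in particular the $\bigl(\|\na X_\la\|_{L^p}\|g\|_{L^\infty}+\|\p_{X_\la}g\|_{L^p}\bigr)^{2/p}$ and $(\|\na X_\la\|_{L^p}\|g\|_{L^\infty})^{2/(ps)}$ terms — can be redistributed via Young's inequality into the three clusters of \eqref{S9eq11} without spawning a self-referential $\|\na u_{X_\la}\|_{L^p}$ contribution. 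Once this matching is done, taking the supremum over $\la\in\La$ and using the smallness $\|\mu(\r_0)-1\|_{L^\infty}\frak{M}/I(X)\leq\ep_1$ from \eqref{S9eq11as} to absorb the residual perturbation produces \eqref{S9eq11}.
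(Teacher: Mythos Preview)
Your proposal is correct and follows essentially the same route as the paper: both recast the $u_{X_\la}$ equation as a perturbed Stokes system, absorb the $(\mu(\r)-1)\cM(u_{X_\la})$ piece via \eqref{S1eq2}, split $\Pi=\Pi_1+\Pi_2$ as in \eqref{b.4a}, invoke Proposition~\ref{S9prop1} to bound $\|\Pi_2\|_{L^\infty}$ (note it is $\Pi_2$, not $\na\Pi_2$, that is the double Riesz transform), and then use the smallness in \eqref{S9eq11as} to absorb the self-referential $\|\na u_{X_\la}\|_{L^p}$ term. The only cosmetic difference is that the paper introduces the explicit corrector $\hbar=-\na(-\D)^{-1}\mathrm{Tr}(\na X\na u)$ to make $u_X-\hbar$ divergence-free and works with the Leray projection directly, rather than citing an abstract $L^p$ Stokes estimate; also, the paper places the $\p_jX_\la\cdot\na\Pi_1$ contribution in the third cluster (pairing $\|\na X_\la\|_{L^p}$ with $\|D_tu\|_{L^2}$) rather than the second.
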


In Section \ref{Sect6}, we shall first present the estimates of $\|X_\la(t)\|_{L^\infty}$ and $\|\na X_\la(t)\|_{L^p}.$

\begin{prop}\label{S6prop1}
{\sl Let $X_0\in L^\infty\cap \dot W^{1,p}$ and $u\in L^1_t(Lip(\R^2))$ be a smooth solenoidal vector field. Let  $X(t,\cdot)$ be a smooth enough solution of
\eqref{Xtrans}. Then for any $r\in [1,\infty],$ we have
\beq\label{S6eq1}
\begin{split}
&\|X_0\|_{L^r}\exp\bigl(-\|\na u\|_{L^1_t(L^\infty)}\bigr)\leq \|X(t)\|_{L^r}\leq \|X_0\|_{L^r}\exp\bigl(\|\na u\|_{L^1_t(L^\infty)}\bigr).
\end{split}
\eeq
Moreover, if $\left(X_\la(0)\right)_{\la\in\La}$ be a non-degenerate family of vector fields and $X_\la(t)$ be the corresponding solution
  of \eqref{Xtrans} with initial data $X_\la(0).$ Then under the assumptions of Proposition \ref{S2prop2} and Corollary \ref{S9col1}, for $t\leq T^\star$ given by \eqref{S9eq11as}, one has
\beq \label{S6eq2}
\sup_{\la\in\La}\|\nabla X_\la(t)\|_{L^p}\leq C_0\Bigl(1+\sup_{\la\in\La}\|\nabla X_\la(0)\|_{L^p}+\Bigl[1+
\f{1}{I(X(0))^{\f{p}{p-2}}}\Bigr]\sup_{\la\in\La}\|X_\la(0)\|_{L^\infty}\Bigr){\rm V}(t).
\eeq Here and in what follows, we always denote ${\rm V}(t)=\exp\left(C\|\na u\|_{L^1_t(L^\infty)}\right).$ }
\end{prop}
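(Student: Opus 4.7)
The $L^r$ bounds \eqref{S6eq1} follow from a standard energy argument on \eqref{Xtrans}. For $r\in[1,\infty)$ I would test the equation against $|X|^{r-2}X$; the contribution of $u\cdot\nabla X$ vanishes by $\dive u=0$, leaving $\tfrac1r\tfrac{d}{dt}\|X\|_{L^r}^r=\int (X\!\cdot\!\nabla u)\!\cdot\!|X|^{r-2}X\,dx$, whose absolute value is bounded by $\|\nabla u\|_{L^\infty}\|X\|_{L^r}^r$. Gronwall's inequality, applied to each sign of the resulting differential inequality, yields both sides of \eqref{S6eq1}. The case $r=\infty$ is obtained either by letting $r\to\infty$, or directly from the characteristics: if $\psi$ denotes the flow of $u$, then $X(t,\psi(t,y))=D\psi(t,y)X_0(y)$ and $\|D\psi(t)^{\pm 1}\|_{L^\infty}\leq \exp(\|\nabla u\|_{L^1_t(L^\infty)})$.

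For \eqref{S6eq2} the first step is to derive the transport equation for $\nabla X$. A short computation starting from \eqref{Xtrans} and cancelling $\partial_iX^k\partial_ku^j$ gives
\begin{equation*}
D_t\partial_iX^j=-\partial_iu^k\,\partial_kX^j+\partial_iu_X^j,\qquad u_X\eqdefa \partial_Xu,
\end{equation*}
so that an $L^p$ energy estimate (again using $\dive u=0$) produces $\tfrac{d}{dt}\|\nabla X(t)\|_{L^p}\leq \|\nabla u\|_{L^\infty}\|\nabla X(t)\|_{L^p}+\|\nabla u_X(t)\|_{L^p}$, and a first Gronwall yields
\begin{equation*}
\|\nabla X_\lambda(t)\|_{L^p}\leq V(t)\Bigl(\|\nabla X_\lambda(0)\|_{L^p}+\int_0^t\|\nabla u_{X_\lambda}(s)\|_{L^p}\,ds\Bigr).
\end{equation*}

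The remaining integral I would bound via Corollary \ref{S9col1}. The two auxiliary quantities appearing there remain controlled: \eqref{S6eq1} with $r=\infty$ gives $\sup_\lambda\|X_\lambda(t)\|_{L^\infty}\leq V(t)\sup_\lambda\|X_\lambda(0)\|_{L^\infty}$, while the pushforward identity $X_\lambda(t,\psi(t,y))=D\psi(t,y)X_\lambda(0,y)$ combined with $\|D\psi(t)^{-1}\|_{L^\infty}\leq V(t)$ yields $I(X(t))\geq V(t)^{-1}I(X(0))$, so the non-degeneracy of the family survives up to an exponential in $\|\nabla u\|_{L^1_t(L^\infty)}$. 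Inserting \eqref{S9eq11} and using the two-dimensional Gagliardo-Nirenberg interpolations for $\|\nabla u\|_{L^p}$ and $\|\nabla u\|_{L^\infty}$ reduces every time integral to quantities already controlled by Propositions \ref{S2prop1}, \ref{S2prop2} and \ref{S3prop1}. Taking $\sup_\lambda$ yields a linear Gronwall inequality for $M(t)\eqdefa\sup_\lambda\|\nabla X_\lambda(t)\|_{L^p}$; the feedback term $\int_0^t(\|\nabla u\|_{L^p}+\|\nabla u\|_{L^\infty}+\|D_tu\|_{L^2})M(s)\,ds$ has time-integrable coefficient and is absorbed into an enlarged $V(t)$, while a Young inequality applied to the mixed product $\sup_\lambda\|X_\lambda(0)\|_{L^\infty}^{(p-2)/p}/I(X(0))$ produces the splitting $\sup_\lambda\|X_\lambda(0)\|_{L^\infty}+1/I(X(0))^{p/(p-2)}$ announced in \eqref{S6eq2}.

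The main obstacle is this last closure. The feedback of $M$ into $\|\nabla u_{X_\lambda}\|_{L^p}$ through the final line of \eqref{S9eq11}, combined with the singular factor $1/I(X(t))$ which could in principle deteriorate along the flow, would break the Gronwall argument without the bootstrap hypothesis \eqref{S9eq11as} and the smallness \eqref{S1eq2}. These two ingredients are precisely what keeps $1/I(X(t))$ comparable to $1/I(X(0))$ up to a factor $V(t)$ and what allows the $M$-feedback to be absorbed without destroying the exponent $p/(p-2)$.
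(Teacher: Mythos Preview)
Your proposal is correct and follows essentially the same route as the paper: energy estimate on \eqref{Xtrans} for \eqref{S6eq1}, then differentiate to obtain the transport equation for $\nabla X$, insert the bound \eqref{S9eq11} from Corollary~\ref{S9col1}, control the time integrals of $\|\nabla u\|_{L^p}$, $\|D_tu\|_{L^2}$ and $\|D_tu\|_{L^2}^{2/p}\|\nabla D_tu\|_{L^2}^{1-2/p}$ via Propositions~\ref{S2prop2} and~\ref{S3prop1}, and close by Gronwall. The only cosmetic differences are that the paper records the lower bound $I(X(t))\geq I(X(0))\exp\bigl(-\|\nabla u\|_{L^1_t(L^\infty)}\bigr)$ directly from the pointwise differential inequality along characteristics (your pushforward argument is equivalent), and that it inserts \eqref{S9eq11} into the integrated inequality first and applies a single Gronwall rather than two.
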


By virtue of Propositions \ref{S9prop1} and \ref{S6prop1}, we prove  the following key ingredient used in the proof of Theorem \ref{thm1} in Section \ref{Sect6}:

\begin{prop} \label{S9lem3}
{\sl Let $u$ be a smooth enough solution of \eqref{INS} on $[0,T^\ast[.$  Let $\left(X_\la(0)\right)_{\la\in\La}$ be
a non-degenerate family of vector fields. Then under the assumptions of Theorem \ref{thm1}, there exist a positive constant  $\frak{N}$  and some small enough constant $\e_0,$
which depend only on $C_0, I(X(0)), \sup_{\la\in\La}\|X_\la(0)\|_{L^\infty},$  $\sup_{\la\in\La}\|\na X_\la(0)\|_{L^p}$ and $C_{\mu,X}$
 so that under the smallness condition \eqref{S1eq2}, there holds
\beq \label{S6eq4}
\begin{split}
\|\na u\|_{L^1_t(L^\infty)} \leq \frak{N}\quad\mbox{for any}\ \ t\in [0,T^\ast[.
\end{split}
\eeq}
\end{prop}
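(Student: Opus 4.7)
The strategy is to decompose $\nabla u$ via the momentum equation of \eqref{INS} and the Leray projection, and then control each piece of the decomposition: a mild piece that is bounded in $L^\infty$ by standard $\nabla(-\Delta)^{-1}$ estimates, and a problematic piece to which Proposition \ref{S9prop1} applies. Since $\dive u=0$, one has
\[
\dive(2\mu(\rho)\cM(u))=\Delta u+\dive\bigl(2(\mu(\rho)-1)\cM(u)\bigr),
\]
so applying $\PP$ to the momentum equation of \eqref{INS} gives $\Delta u=\PP\bigl(\rho D_t u-\dive(2(\mu(\rho)-1)\cM(u))\bigr)$, hence the decomposition $\nabla u=\mathrm{I}+\mathrm{II}$ with
\[
\mathrm{I}=-\nabla(-\Delta)^{-1}\PP(\rho D_tu),\qquad \mathrm{II}=\nabla(-\Delta)^{-1}\PP\dive\bigl(2(\mu(\rho)-1)\cM(u)\bigr).
\]

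For $\mathrm{I}$, the operator $\nabla(-\Delta)^{-1}$ maps $L^{q_1}\cap L^{q_2}$ into $L^\infty(\R^2)$ whenever $1<q_1<2<q_2$. Interpolating $\|\rho D_tu\|_{L^{q_1}}$ and $\|\rho D_tu\|_{L^{q_2}}$ between $\|D_tu\|_{L^2}$ and $\|\nabla D_tu\|_{L^2}$ via Gagliardo-Nirenberg and plugging in the weighted bounds \eqref{p.2a}, \eqref{S3eq6} yields an estimate of $\|\mathrm{I}(t)\|_{L^\infty}$ by an $L^1(\R^+_t)$ function of the initial data; the $\sqrt{t}$ weight in Proposition \ref{S3prop1} compensates for the initial singularity of $D_tu$ while the $\w{t}^{(1/2+\d)_-}$ factors ensure integrability at infinity. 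Term $\mathrm{II}$ is the delicate one: it is $\nabla^2(-\Delta)^{-1}$ applied to $g\eqdefa 2(\mu(\rho)-1)\cM(u)$, up to Riesz-type projections that do not alter the argument, and this is precisely where Proposition \ref{S9prop1}, used with the transported family $X(t)=(X_\la(t))_{\la\in\La}$ from Proposition \ref{S6prop1}, provides the $L^\infty$ control.

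To feed Proposition \ref{S9prop1}, we need the following ingredients. First, $\|g\|_{L^p}+\|g\|_{L^\infty}\lesssim \ep_0\bigl(\|\nabla u\|_{L^p}+\|\nabla u\|_{L^\infty}\bigr)$, where the prefactor $\ep_0$ comes from \eqref{S1eq2} and is preserved in time since $\mu(\rho)$ is transported along the flow of $u$. Second, $\|X_\la(t)\|_{L^\infty}$ and $\|\nabla X_\la(t)\|_{L^p}$ are controlled by $\mathrm{V}(t)$ through \eqref{S6eq1}-\eqref{S6eq2}, and $I(X(t))\geq I(X(0))\mathrm{V}(t)^{-1}$ by the lower bound in \eqref{S6eq1}. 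Third, Leibniz gives $\p_{X_\la}g=2(\p_{X_\la}\mu(\rho))\cM(u)+2(\mu(\rho)-1)\p_{X_\la}\cM(u)$; the factor $\p_{X_\la}\mu(\rho)$ satisfies $D_t(\p_{X_\la}\mu(\rho))=0$ by the commutator identity $[\p_{X_\la},D_t]=0$ in \eqref{S3eq6a}, and so the initial bound $\|\p_{X_\la}\mu(\rho_0)\|_{L^\infty}\leq C_{\mu,X}$ is propagated to all $t>0$. For the second term, $\p_{X_\la}\cM(u)$ differs from $\cM(u_{X_\la})$ only by a pointwise commutator of size $|\nabla X_\la||\nabla u|$; Corollary \ref{S9col1} supplies $\|\nabla u_{X_\la}\|_{L^p}$, while the commutator is multiplied by the small factor $\|\mu(\rho)-1\|_{L^\infty}\leq \ep_0$.

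Collecting these estimates inside Proposition \ref{S9prop1}, one arrives at a pointwise in time inequality, valid on $[0,T^\star]$, of the form
\[
\|\nabla u(t)\|_{L^\infty}\leq F(t)\,\mathrm{V}(t)^C+\Bigl(\ep_0+\frac{\ep_1}{I(X(0))}\Bigr)\mathrm{V}(t)^C\,\|\nabla u(t)\|_{L^\infty},
\]
where $F\in L^1(\R^+)$ depends only on the quantities controlling $C_0$. By the definition \eqref{S9eq11as} of $T^\star$, the prefactor of $\|\nabla u(t)\|_{L^\infty}$ is at most $2\ep_1/I(X(0))\cdot\mathrm{V}(t)^C$; choosing $\ep_0,\ep_1$ small enough (depending on $C_0, I(X(0))$, $\sup_\la\|X_\la(0)\|_{L^\infty}$, $\sup_\la\|\nabla X_\la(0)\|_{L^p}$ and $C_{\mu,X}$) so that this is bounded by $1/2$ on the bootstrap interval, one absorbs the last term and, integrating in $t$, gets a Gronwall-type bound $\|\nabla u\|_{L^1_t(L^\infty)}\leq 2\int_0^t F(s)\mathrm{V}(s)^C\,ds$ in which $\mathrm{V}(t)=\exp(C\|\nabla u\|_{L^1_t(L^\infty)})$ itself. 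A standard continuity argument then closes the loop and yields $\|\nabla u\|_{L^1_t(L^\infty)}\leq\frak{N}$ on $[0,T^\star]$; combined with \eqref{S6eq2} this forces $\sup_\la\|\nabla X_\la\|_{L^\infty_t(L^p)}\leq \frak{M}/2$, contradicting maximality of $T^\star$ unless $T^\star=T^\ast$. The main obstacle is precisely this closure: the three unknowns $\|\nabla u\|_{L^\infty}$, $\|\nabla u_{X_\la}\|_{L^p}$ and $\|\nabla X_\la\|_{L^p}$ are coupled through the exponential factor $\mathrm{V}(t)$, and the smallness parameters $\ep_0$ in \eqref{S1eq2} and $\ep_1$ in \eqref{S9eq11as} must be tuned jointly against the size of $\frak{M}$ and $I(X(0))^{-1}$ for the absorption to work.
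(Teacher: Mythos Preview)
Your overall strategy matches the paper's: decompose $\nabla u$ via the momentum equation into a piece driven by $\rho D_tu$ and a double-Riesz piece driven by $(\mu(\rho)-1)\cM(u)$, control the latter via Proposition~\ref{S9prop1} together with Corollary~\ref{S9col1} and Proposition~\ref{S6prop1}, and close by a bootstrap on $[0,T^\star]$. The structure of your absorption/continuity argument at the end is exactly what the paper does in \eqref{S6eq12}--\eqref{S6eq17}.

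There is, however, a genuine gap in your treatment of term $\mathrm{I}=-\nabla(-\Delta)^{-1}\PP(\rho D_tu)$. You claim one can interpolate $\|\rho D_tu\|_{L^{q_1}}$ for some $q_1<2$ between $\|D_tu\|_{L^2}$ and $\|\nabla D_tu\|_{L^2}$ via Gagliardo--Nirenberg. In two dimensions Gagliardo--Nirenberg from $L^2$ and $\dot H^1$ only produces $L^q$ for $q\ge 2$; no estimate below $L^2$ is available for $D_tu$ from Propositions~\ref{S2prop2}--\ref{S3prop1}, and the hypothesis $u_0\in\dot H^{-2\delta}$ does not propagate to negative regularity of $D_tu$. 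Without the $L^{q_1}$ control ($q_1<2$), the low-frequency part of $\nabla(-\Delta)^{-1}\PP(\rho D_tu)$ is not bounded in $L^\infty$ (the kernel $x/|x|^2$ is not integrable at infinity in $\R^2$).

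The paper fixes exactly this point by inserting a Littlewood--Paley cutoff: it writes $\nabla u=S_0\nabla u+(I-S_0)\nabla u$, bounds $\|S_0\nabla u\|_{L^\infty}\le C\|\nabla u\|_{L^p}$ by Bernstein, and only applies the momentum-equation decomposition to $(I-S_0)\nabla u$. With the high-frequency projector in place one has $\|\nabla(-\Delta)^{-1}\PP(I-S_0)(\rho D_tu)\|_{H^1}\le C\|\rho D_tu\|_{L^2}$, and then the interpolation $\|f\|_{L^\infty}\lesssim\|f\|_{H^1}^{1/2}\|\nabla f\|_{L^4}^{1/2}$ yields the bound $\|D_tu\|_{L^2}^{3/4}\|\nabla D_tu\|_{L^2}^{1/4}$ in \eqref{S6eq8}, which is what feeds into the time integral. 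Once you insert this cutoff, your argument goes through and coincides with the paper's.
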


Now we are in a position to complete the proof of Theorem \ref{thm1}.

\begin{proof}[Proof of Theorem \ref{thm1}]
We denote
$\r_{0,\eta}\eqdefa\r_0\ast j_\eta,$ $u_{0,\eta}\eqdefa u_0\ast
j_\eta,$  and $\mu_\eta=\mu\ast j_\eta,$ where
$j_\eta(|x|)=\eta^{-2}j(|x|/\eta)$ is the standard Friedrich's
mollifier. Then along the same line to the proof of Propositions \ref{S2prop2}, \ref{S3prop1}, \ref{S6prop1}
and \ref{S9lem3}, we can prove that the inequalities (\ref{ener2}-\ref{S3eq6}) and
(\ref{S6eq1}-\ref{S6eq4}) hold for the solution,  $(\r_\eta,u_\eta, \na\Pi_\eta),$
of  \eqref{INS} with viscous coefficient $\mu_\eta$ and
with initial data $(\r_{0,\eta},u_{0,\eta})$ on $[0,T_\eta^\ast[$ for some maximal time of existence $T^\ast_\eta$ provided that
$\eta$ is sufficiently small.
Correspondingly, for any $\la\in\La,$ \eqref{Xtrans} with initial data $X_\la$ has a unique solution $X_{\la,\eta}(t)$ for any $t<T^\ast_\eta$
which satisfies the estimates \eqref{S6eq1} and \eqref{S6eq2}.

By virtue of \eqref{S6eq4} and Theorem 4.1 of \cite{AZ1}, we conclude that $T^\ast_\eta=\infty.$ Then a standard
compactness argument yields the existence part of  Theorem \ref{thm1}. Moreover, by virtue of
\eqref{S6eq1} and \eqref{S6eq2}, the limit set $\left(X_\la(t)\right)_{\la\in\La}$ of   $\left(X_{\la,\eta}(t)\right)_{\la\in\La}$
as $\eta$ goes to $0$ is a non-degenerate family of vector fields and $X_\la(t)\in L^\infty(\R^+; W^{1,p}(\R^2))$ for each $\la\in\La.$
The uniqueness part of Theorem \ref{thm1} follows along the same line to the proof of Theorem 1.2 of \cite{AZ1}. We skip the details here.
\end{proof}

To prove Theorem \ref{thm2}, as in \cite{LZ1,LZ2}, we need to propagate striated regularity of the convection velocity field.

\begin{prop}\label{S3col1}
{\sl Let $\d\in ]0,1/2[,$  $p\in \left]2,2+2/\d\right[,$ and $X_0\in L^\infty\cap\dot{W}^{1,p}(\R^2).$ Let $(\r,u,X)$ be a smooth enough solution
 of the coupled system \eqref{INS} with \eqref{Xtrans} on $[0,T^\ast[.$ Then under the assumptions of Proposition \ref{S2prop2} and $\bigl\|\p_{X_0}\r_0\bigr\|_{L^\infty}\leq C_{X,\r_0},$  for any $t<T^\ast,$  we have
\beq \label{S3eq14}
\|\w{t}^{\d_-}u_X(t)\|_{L^2}^2+\|\w{t}^{\d_-}\na u_X\|_{L^2_t(L^2)}^2\leq C_1(1+\|X\|_{L^\infty_t(L^\infty)}^2+\|\na X\|_{L^\infty(L^p)}^2\bigr){\rm V}(t),
\eeq Here and in all that follows, we always denote $C_1$ to be a positive constant depending on $C_0,$ $\|X_0\|_{ L^\infty\cap\dot{W}^{1,p}}$ and $C_{X,\r_0}.$}
\end{prop}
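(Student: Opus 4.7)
The plan is to run a time-weighted $L^2$ energy estimate on the linearised momentum equation for $u_X=\p_X u$ in~\eqref{XLINS}. Taking the $L^2$ inner product of that equation with $\w{t}^{2\d_-}u_X$ and using the mass equation $\pa_t\r+\dive(\r u)=0$ to rewrite the convective part as a full time derivative would produce
\[
\tfrac12\tfrac{d}{dt}\bigl(\w{t}^{2\d_-}\|\sqrt{\r}u_X\|_{L^2}^2\bigr)+2\w{t}^{2\d_-}\!\int\!\mu(\r)|\cM(u_X)|^2\,dx=\d_-t\w{t}^{2\d_--2}\|\sqrt\r u_X\|_{L^2}^2+\w{t}^{2\d_-}(G\,|\,u_X)+\w{t}^{2\d_-}(\Pi_X\,|\,\dive u_X).
\]
Since $\mu(\r)\ge 3/4$ under~\eqref{S1eq2} (as already exploited in Proposition~\ref{S3prop1}), and since $\int|\cM(u_X)|^2\,dx=\tfrac12(\|\na u_X\|_{L^2}^2+\|\dive u_X\|_{L^2}^2)$, the dissipation on the left already dominates $\tfrac34\w{t}^{2\d_-}\|\na u_X\|_{L^2}^2$, which is precisely the quantity one wants on the left-hand side of~\eqref{S3eq14}.

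Next, I would bound the source $G$ from~\eqref{S3eq7} piece by piece. The transport equation $\p_t\r_X+u\cdot\na\r_X=0$ in~\eqref{XLINS} preserves $\|\r_X(t)\|_{L^\infty}\le C_{X,\r_0}$, and smoothness of $\mu$ then yields $\|\p_X\mu(\r)\|_{L^\infty}\le C\,C_{X,\r_0}$. The piece $\r_X D_tu$ is paired directly with $u_X$ and controlled via the time-weighted estimate on $D_tu$ of Proposition~\ref{S3prop1}; the three divergence-form pieces $\p_i(\p_X\mu(\r)\cM_{ij}(u))$, $\p_i\bigl(\mu(\r)(\p_iX\!\cdot\!\na u^j+\p_jX\!\cdot\!\na u^i)\bigr)$ and $\p_iX\!\cdot\!\na(2\mu(\r)\cM_{ij}(u))$ would be integrated by parts against $u_X$ (using $\dive X=0$ from~\eqref{S3eq6a} wherever required), producing expressions bounded by $(\|X\|_{L^\infty}+\|\na X\|_{L^p})\|\na u\|_{L^{2p/(p-2)}}\|\na u_X\|_{L^2}$, a fraction of which is absorbed into the viscous dissipation. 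The piece $\p_jX\cdot\na\Pi$ cannot be treated term-by-term; for it I would use $\dive X=0$ once more to rewrite $(\p_jX^i\p_i\Pi\,|\,u_X^j)=-(\Pi\,|\,\p_jX^i\p_iu_X^j)$, so that only $\|\Pi\|_{L^{2p/(p-2)}}$ is needed, and that is delivered by Desjardins' pressure bound~\eqref{upi}.

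The main obstacle will be the remaining pressure contribution $(\Pi_X\,|\,\dive u_X)$, which does not vanish because $\dive u_X=\p_iX^k\p_ku^i\not\equiv 0$. I would write $\Pi_X=X\cdot\na\Pi$ and decompose $\na\Pi$ via~\eqref{b.4a} as $\na\Pi=\bigl(\na\Pi-\mathbb{Q}\dive(2\mu(\r)\cM(u))\bigr)+\mathbb{Q}\dive(2\mu(\r)\cM(u))$; the first summand is in $L^2$ by Proposition~\ref{S2prop1}, and the second is handled by exploiting the smallness of $\mu(\r)-1$ in~\eqref{S1eq2} together with the $L^2$-boundedness of the Leray projector, so as to absorb the ensuing term into the dissipation. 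The resulting control has the form $\|X\|_{L^\infty}\|\na X\|_{L^p}\|\na u\|_{L^{2p/(p-2)}}\cdot(\text{an $L^2_t$-integrable factor})$, fitting the right-hand side of~\eqref{S3eq14}. The hypothesis $p\in(2,2+2/\d)$ is exactly what makes $\|\na u\|_{L^{2p/(p-2)}}$ time-integrable against the $\w{t}^{\d_-}$ weight, through interpolation between the bounds of Proposition~\ref{S2prop2} on $\|\w{t}^{\d_-}\na u\|_{L^2_t(L^2)}$ and $\|\w{t}^{(1/2+\d)_-}\na u\|_{L^\infty_t(L^2)}$. Assembling these pieces would yield a Gronwall inequality
\[
\tfrac{d}{dt}\bigl(\w{t}^{2\d_-}\|\sqrt\r u_X\|_{L^2}^2\bigr)+c\,\w{t}^{2\d_-}\|\na u_X\|_{L^2}^2\le f(t)\,\w{t}^{2\d_-}\|\sqrt\r u_X\|_{L^2}^2+h(t)
\]
with $f\in L^1(\R^+)$ controlled by ${\rm V}(t)$ and $\int_0^t h(s)\,ds\le C_1\bigl(1+\|X\|_{L^\infty_t(L^\infty)}^2+\|\na X\|_{L^\infty_t(L^p)}^2\bigr){\rm V}(t)$, from which~\eqref{S3eq14} follows after integrating in time.
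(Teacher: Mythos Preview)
Your overall strategy---a time-weighted $L^2$ energy estimate on the $u_X$-equation followed by Gronwall---matches the paper's. The treatment of most pieces of $G$ is also essentially right (though you should cite Proposition~\ref{S2prop2} rather than~\ref{S3prop1} for the time-weighted bound on $D_tu$, and the decomposition $\Pi=\Pi_1+\Pi_2$ of~\eqref{b.4a} rather than Desjardins' torus bound~\eqref{upi}). The real gap is in your handling of the pressure contribution $(\Pi_X\mid \dive u_X)$.

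You write $\Pi_X=X\cdot\nabla\Pi$ and decompose $\nabla\Pi=-\mathbb{Q}(\rho D_tu)+\mathbb{Q}\dive(2\mu(\rho)\cM(u))$. The first summand is indeed in $L^2$. But the second equals $\nabla\Pi_2=\nabla(-\Delta)^{-1}\dive\dive\bigl(2(\mu(\rho)-1)\cM(u)\bigr)$, which is an order-one operator applied to $(\mu(\rho)-1)\cM(u)$; controlling it in any $L^q$ would require a derivative of $\mu(\rho)$ or of $\nabla u$, neither of which is available (the density may be discontinuous and $\nabla^2u$ is not bounded). Smallness of $\mu(\rho)-1$ does not help, since $X\cdot\nabla\Pi_2$ carries no $\nabla u_X$ structure to absorb. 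If instead you integrate by parts via $\dive X=0$ to produce $-(\Pi_2\mid \p_X\dive u_X)$, expanding $\p_X\dive u_X=\p_X(\p_iX^k\p_ku^i)$ forces the term $\p_i(\p_XX^k)\,\p_ku^i$, i.e.\ a bound on $\nabla\p_XX$, which is not controlled at this stage of the argument (that comes only later, in Proposition~\ref{S0prop1}).

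The paper avoids this entirely by \emph{not} writing $\Pi_X=X\cdot\nabla\Pi$. Instead it takes $\dive$ of the $u_X$-momentum equation to obtain the elliptic representation~\eqref{S3eq10}, so that $(G-\nabla\Pi_X\mid u_X)$ becomes $(\PP G\mid u_X)$ plus two terms built from $\rho D_tu_X$ and $\cM(u_X)$. The delicate piece $\bigl(\nabla(-\Delta)^{-1}\dive(\rho D_tu_X)\mid u_X\bigr)$ is then rewritten (see~\eqref{S3eq11}) as the time derivative of $\int\nabla(-\Delta)^{-1}\dive(\rho u_X)\mid u\cdot\nabla X\,dx$ plus lower-order remainders; this is why the energy functional in Lemma~\ref{energyprop} is the corrected quantity $\cY(t)$ of~\eqref{S3eq12} rather than simply $\tfrac12\|\sqrt\rho\,u_X\|_{L^2}^2$. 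With $\cY(t)$ in hand, multiplying~\eqref{S3eq13} by $\w{t}^{2\d_-}$ and applying Gronwall (using $p<2+2/\d$ exactly as you anticipated) yields~\eqref{S3eq14}.
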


\begin{prop}\label{S4prop1}
{\sl Let $p\in \left]4,2+2/\d\right[.$ Then under the assumptions of Proposition \ref{S3col1} and \eqref{S1eq2}, for any $t<T^\ast,$ one has
\beq \label{S4eq13}
\begin{split}
\bigl\|&\sqrt{t}\w{t}^{\d_-}\na u_X(t)\bigr\|_{L^2}^2+\bigl\|\sqrt{t}\w{t}^{\d_-}D_t u_X(t)\bigr\|_{L^2}^2\\
&\qquad\qquad\leq C_1\bigl(1+\|X\|_{L^\infty_t(L^\infty)}^2+\|\na X\|_{L^\infty_t(L^p)}^2\bigr){\rm V}(t).
\end{split}
\eeq If we assume moreover that $\p_{X_0}u_0\in \dot{H}^1(\R^2),$ we have
\beq \label{S4eq13op}
\begin{split}
\bigl\|&\w{t}^{\left(\d+\f12\right)_-}\na u_X(t)\bigr\|_{L^2}^2+\bigl\|\w{t}^{\left(\d+\f12\right)_-}D_t u_X(t)\bigr\|_{L^2}^2\\
&\qquad\qquad\leq C_1\bigl(1+\bigl\|\na\p_{X_0}u_0\bigr\|_{L^2}+\|X\|_{L^\infty_t(L^\infty)}^2+\|\na X\|_{L^\infty_t(L^p)}^2\bigr){\rm V}(t).
\end{split}
\eeq
}
\end{prop}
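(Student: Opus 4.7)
The plan is to perform a higher-order energy estimate on the momentum equation in \eqref{XLINS} with $D_tu_X$ as test function, following the Desjardins-type strategy already used in Proposition \ref{S3prop1} for $u$ itself, but now accounting for the fact that $u_X$ is not divergence-free. Taking the $L^2$ inner product of the second equation in \eqref{XLINS} with $D_tu_X$ and using the identity $\na D_tu_X = \p_t\na u_X + u\cdot\na\na u_X + \na u\cdot\na u_X$, the viscous term produces, after integration by parts,
\begin{equation*}
\f12\f{d}{dt}\int_{\R^2}2\mu(\r)|\cM(u_X)|^2\,dx + \int_{\R^2}\r|D_tu_X|^2\,dx,
\end{equation*}
up to commutator remainders that are absorbed via the transport $\p_t\mu(\r)+u\cdot\na\mu(\r)=0$ and the bound on $\|\na u\|_{L^\infty}$. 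The right-hand side gathers $(G\,|\,D_tu_X)-(\na\Pi_X\,|\,D_tu_X)$.

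The new difficulty, compared with Proposition \ref{S3prop1}, is that $\dive u_X = \dive(u\cdot\na X)\neq 0$, so the pressure term cannot be discarded. I would integrate by parts to obtain $-(\Pi_X\,|\,\dive D_tu_X)$, then rewrite $\dive D_tu_X = \p_t\dive u_X + \dive(u\cdot\na u_X)$; the first piece is the material derivative of $\dive(u\cdot\na X)$ and is controlled by $\|\na u\|_{L^\infty}\|\na X\|_{L^p}\|\na u\|_{L^{p'}}$-type bounds, while $\Pi_X$ itself is reconstructed from the elliptic equation obtained by applying $\dive$ to the $u_X$-momentum equation, which, in the spirit of \eqref{b.4a}, expresses $\Pi_X$ via double Riesz transforms of $\r D_tu_X$, $\mu(\r)\cM(u_X)$ and $G$; the $L^\infty$ bound of Proposition \ref{S9prop1} furnishes the needed control. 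Each piece of the source $G$ in \eqref{S3eq7} is then estimated by H\"older's inequality: $\|\r_X\|_{L^\infty}$ is propagated through the transport equation from $\|\p_{X_0}\r_0\|_{L^\infty}\leq C_{X,\r_0}$; $\|\p_X\mu(\r)\|_{L^\infty}$ is similarly controlled; the quantities $\|\na u\|_{L^p}, \|D_tu\|_{L^2}, \|\na D_tu\|_{L^2}, \|\na\Pi\|_{L^p}$ are governed by Propositions \ref{S2prop2} and \ref{S3prop1}; and the powers of $\|X\|_{L^\infty}$, $\|\na X\|_{L^p}$ appear exactly as required by the right-hand side of \eqref{S4eq13}.

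Once the pointwise-in-time differential inequality is in place, I multiply by the weight $t\w{t}^{2\d_-}$ and integrate on $[0,t]$. The time-derivative of the weight yields an error of the form $\w{t}^{2\d_-}\bigl(\|\sqrt{\mu(\r)}\cM(u_X)\|_{L^2}^2+\|D_tu_X\|_{L^2}^2\bigr)$, which is absorbed using the integrated bound \eqref{S3eq14}. A Gronwall argument that folds the uniform bound $\|\na u\|_{L^1_t(L^\infty)}\leq\frak{N}$ from Proposition \ref{S9lem3} into the factor ${\rm V}(t)$ then delivers \eqref{S4eq13}. For \eqref{S4eq13op}, the extra assumption $\p_{X_0}u_0\in\dot H^1(\R^2)$ supplies an initial bound on $\na u_X(0)$, so the weight $t\w{t}^{2\d_-}$ can be replaced by $\w{t}^{(1+2\d)_-}$ at the cost of an additional $\|\na\p_{X_0}u_0\|_{L^2}$ contribution. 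The main obstacle is the pressure $\Pi_X$ together with the divergence correction $\dive(u\cdot\na X)$: both must be estimated in $L^\infty$ and $L^p$ respectively, using Proposition \ref{S9prop1}, and it is precisely the restriction $p\in]4,2+2/\d[$ that ensures $W^{1,p}\hookrightarrow L^\infty$ with enough room to close the estimates while the decay $\w{t}^{\d_-}$ remains integrable against the time-weight losses.
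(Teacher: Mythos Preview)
Your broad strategy---test the $u_X$-momentum equation by $D_tu_X$, treat the pressure through $\dive D_tu_X$, then apply a time-weighted Gronwall---matches the paper's, but there is a genuine gap in how you propose to close the estimate on the right-hand side.

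The first problem is your ``Hölder on each piece of $G$'' step.  Two pieces of $G$ in \eqref{S3eq7} cannot be bounded that way.  For the term $\p_jX\!\cdot\!\na\Pi$, the component $\na\Pi_2$ (with $\Pi_2$ from \eqref{b.4a}) is \emph{not} in any $L^q$: it is an order-one singular operator on $\mu(\rho)\cM(u)$, and $\mu(\rho)$ has no derivative bounds.  Only $\Pi_2\in L^q$ (via $\|\na u\|_{L^q}$) or $\Pi_2\in L^\infty$ (via Proposition~\ref{S9prop1}) are available, so after an integration by parts you are forced to face $\na D_tu_X$, which you do not control.  The same obstruction arises for $\p_iX\!\cdot\!\na\bigl(2\mu(\rho)\cM_{ij}(u)\bigr)$ and for $\p_i\bigl(2\p_X\mu(\rho)\cM_{ij}(u)\bigr)$.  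The paper's way out (Lemma~\ref{S4lem1}) is to write each of these pairings, after an $x$-integration by parts, as a \emph{total time derivative} $\frac{d}{dt}\int(\cdots)$ plus a remainder; the time-derivative pieces are then estimated pointwise at the endpoints using only $\|\na u\|_{L^q}$, $\|\na u_X\|_{L^2}$ and $\|\na X\|_{L^p}$, while the remainders involve only $D_t\na u$, $D_t\na X$ and commutators $[\PP,u\cdot\na]$.  This integration-by-parts-in-time device is the missing idea in your sketch.

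The second problem is your appeal to Proposition~\ref{S9prop1} for an $L^\infty$ bound on $\Pi_X$.  Reconstructed from \eqref{S3eq10}, $\Pi_X$ contains $(-\Delta)^{-1}\dive\dive\bigl(2\mu(\rho)\cM(u_X)\bigr)$; applying Proposition~\ref{S9prop1} to it would require the striated quantity $\p_{X_\lambda}\bigl(\mu(\rho)\cM(u_X)\bigr)$ in $L^p$, i.e.\ a second-order striated derivative $\p_{X_\lambda}\p_X u$, which is precisely one order higher than what the proposition is meant to deliver.  The paper never uses Proposition~\ref{S9prop1} in this proof.  What it does need, and you do not mention, is an interpolation-type bound for $\|\na u_X\|_{L^{2p/(p-2)}}$ (Lemma~\ref{S4lem2}), which is where the restriction $p>4$ really enters: it guarantees that the exponent $2p/(p-4)$ appearing there is finite.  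Your explanation of $p>4$ via $W^{1,p}\hookrightarrow L^\infty$ is not the operative constraint, since that embedding holds for every $p>2$.
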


Propositions \ref{S3col1} and \ref{S4prop1} will be proved in Sections \ref{Sect7} and \ref{Sect8} respectively. Then we shall prove
in Section \ref{Sect9} the following propositions:

\begin{prop}\label{S0prop1}
{\sl Let $\d\in ]1/3,1/2[.$
We assume that $\p_{X_0}\mu(\r_0)\in L^6(\R^2)\cap L^\infty(\R^2)$ and $\p_{X_0}^2\mu(\r_0)\in L^2(\R^2).$
Then under the assumptions of Proposition \ref{S4prop1} and \eqref{S1eq2}, for any $t<T^\ast,$ one has
\beq \label{S0eq1}
\begin{split}
\|\na \p_XX(t)\|_{L^2}\leq C_2\bigl(1+\|X\|_{L^\infty_t(L^\infty)}&+\|\na X\|_{L^\infty_t(L^4)}+\|\na X\|_{L^\infty_t(L^8)}\bigr)^3\\
&\qquad\ \ \times{\rm V}(t)
\exp\bigl(C_2\|\Pi_2\|_{L^1_t(L^\infty)}\bigr).
\end{split}
\eeq
where $\Pi_2$ is given by \eqref{b.4a} and  $C_2$ is a positive constant depending on $C_1,$  $\|\p_{X_0}\mu(\r_0)\|_{L^6\cap L^\infty}$ and $\|\p_{X_0}^2\mu(\r_0)\|_{L^2}.$}
\end{prop}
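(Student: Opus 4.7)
The strategy is to propagate $\nabla\partial_X X$ in $L^2$ via a transport equation with source $\nabla \partial_X^2 u$, then control that source using striated estimates on $u_X$ together with the momentum equation. Starting from \eqref{Xtrans} and the commutation $[D_t,\partial_X]=0$ of \eqref{S3eq6a}, one obtains the clean identity
\begin{equation*}
D_t(\partial_X X)=\partial_X D_t X=\partial_X^2 u.
\end{equation*}
Applying $\nabla$ and using $[D_t,\nabla]f=-(\nabla u)^T\nabla f$ yields $D_t(\nabla\partial_X X)+(\nabla u)^T\nabla\partial_X X=\nabla(\partial_X^2 u)$. An $L^2$ energy estimate combined with the $L^1_t(L^\infty)$ bound on $\nabla u$ from Proposition \ref{S9lem3} and Gronwall's inequality then reduces matters to estimating $\|\nabla\partial_X^2 u\|_{L^1_t(L^2)}$.

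For this source term I would expand $\partial_X^2 u=\partial_X u_X$ and apply H\"older with Gagliardo--Nirenberg interpolation to obtain
\begin{equation*}
\|\nabla\partial_X^2 u\|_{L^2}\lesssim \|\nabla X\|_{L^4}\|\nabla u_X\|_{L^4}+\|\nabla X\|_{L^8}\|\nabla u_X\|_{L^{8/3}}+\|X\|_{L^\infty}\|\nabla^2 u_X\|_{L^2}+\cdots,
\end{equation*}
which accounts for the precise norms $L^4$ and $L^8$ on $\nabla X$ in \eqref{S0eq1}. The first two products are estimated by Propositions \ref{S3col1}, \ref{S4prop1} and Corollary \ref{S9col1}; the remaining $\|\nabla^2 u_X\|_{L^2}$ is controlled by elliptic regularity applied to \eqref{XLINS}, rewriting $\mu(\rho)\Delta u_X$ as a combination of $\rho D_t u_X$, the right-hand side $G$ of \eqref{S3eq7}, and $\nabla\Pi_X$. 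The striated control on $\mu(\rho)$ entering through $\nabla\mu\cdot\mathcal{M}(u_X)$ and several pieces of $G$ is propagated in $L^6\cap L^\infty$ and in $L^2$ from the hypotheses $\partial_{X_0}\mu(\rho_0)\in L^6\cap L^\infty$ and $\partial_{X_0}^2\mu(\rho_0)\in L^2$ via the transport equation $D_t\mu(\rho)=0$.

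The delicate piece is $\nabla\Pi_X$, whose singular part is of the same order as $\nabla^2 u_X$. Following the Desjardins-type pressure decomposition \eqref{b.4a}, this singular part is governed by $\Pi_2$; using the commutator $\partial_X\nabla\Pi_2=\nabla(\partial_X\Pi_2)-(\nabla X)^T\nabla\Pi_2$ and integrating by parts inside the $L^2$ inner product, its contribution is recast as $\|\Pi_2\|_{L^\infty}$ multiplied by $\|\nabla\partial_X X\|_{L^2}$ plus already-controlled striated quantities. Consequently $\|\Pi_2\|_{L^\infty}$ enters as a time-dependent coefficient of $\|\nabla\partial_X X\|_{L^2}^2$ in the differential inequality, and Gronwall produces exactly the factor $\exp(C_2\|\Pi_2\|_{L^1_t(L^\infty)})$. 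The cubic dependence in \eqref{S0eq1} accumulates because the two applications of $\partial_X$ and the final spatial derivative generate at most three independent factors of $X$ or $\nabla X$, once all cross terms have been absorbed by the H\"older products above.

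The main obstacle is the handling of $\nabla\Pi_2$: it is not amenable to energy bounds on its own, and only the integration-by-parts trick---trading $\nabla\Pi_2$ for $\|\Pi_2\|_{L^\infty}$ at the cost of an extra derivative on the test function---avoids a loss of regularity. A secondary difficulty is the bookkeeping needed to keep the overall polynomial dependence on striated norms of $X$ exactly cubic, which forces the specific H\"older exponents $(4,4)$ and $(8,8/3)$ in the interpolation above and requires a careful treatment of the commutator $[\partial_X,\Delta]u=-(\Delta X)\cdot\nabla u-2\nabla X:\nabla^2 u$ arising when one commutes $\partial_X$ past the Laplacian.
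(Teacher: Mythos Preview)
Your transport-and-Gronwall skeleton is correct and matches the paper: $D_t(\partial_X X)=\partial_X^2 u$, then
\[
\frac{d}{dt}\|\nabla\partial_X X\|_{L^2}\le \|\nabla u\|_{L^\infty}\|\nabla\partial_X X\|_{L^2}+\|\nabla\partial_X^2 u\|_{L^2},
\]
and the goal is an estimate of the form
\[
\|\nabla\partial_X^2 u\|_{L^2}\le C\bigl(\|\nabla u\|_{L^\infty}+\|\Pi_2\|_{L^\infty}\bigr)\|\nabla\partial_X X\|_{L^2}+\text{(integrable in time)}.
\]
The gap is in how you propose to obtain this estimate. After writing $\nabla\partial_X^2 u=(\nabla X)^T\nabla u_X+X\cdot\nabla(\nabla u_X)$ you try to bound $\|X\|_{L^\infty}\|\nabla^2 u_X\|_{L^2}$ by ``elliptic regularity applied to \eqref{XLINS}, rewriting $\mu(\rho)\Delta u_X$ as \ldots''. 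This step fails: $\mu(\rho)$ is merely in $L^\infty$ (discontinuous across the interface), so $\nabla\mu(\rho)$ is not a function and you cannot pass from $\dive(2\mu(\rho)\cM(u_X))$ to $\mu(\rho)\Delta u_X+2\nabla\mu(\rho)\cdot\cM(u_X)$. Equivalently, if you differentiate the representation \eqref{S4eq3p} once more in space, the term $\nabla^2(-\Delta)^{-1}\PP\dive\bigl((\mu(\rho)-1)\cM(u_X-\hbar)\bigr)$ requires $(\mu(\rho)-1)\cM(u_X)\in \dot H^1$, which again forces $\nabla\mu(\rho)$. No $L^2$ control of $\nabla^2 u_X$ is available in this setting.

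The paper avoids $\nabla^2 u_X$ entirely. It writes $\nabla\partial_X^2 u=[\nabla,\partial_X]u_X+\partial_X(\nabla u_X)$, inserts the representation \eqref{S4eq3p} for $\nabla u_X$, and then moves $\partial_X$ inside each piece via the commutator estimate of Proposition~\ref{Saprop1} for $[\partial_X;\nabla(-\Delta)^{-1}\PP\dive]$. The point is that when $\partial_X$ lands on the coefficient it produces $\partial_X\mu(\rho)$, which \emph{is} controlled (in $L^6\cap L^\infty$ by hypothesis), and when it lands on $\cM(u_X)$ it produces $\partial_X\cM(u_X)$, which contains $\nabla\partial_X^2 u$ with the small prefactor $\|\mu(\rho)-1\|_{L^\infty}$ and can be absorbed. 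The $\Pi_2$ contribution arises not from an integration by parts in an inner product but directly: in $\partial_X\nabla(-\Delta)^{-1}\PP(\partial_j X\,\partial_j\Pi_2)$ one writes $\partial_j X\,\partial_j\Pi_2=\partial_j(\partial_j X\,\Pi_2)-(\Delta X)\Pi_2$ (using $\dive X=0$ implicitly), commutes $\partial_X$ past the zero-order operator, and the piece $\partial_X(\partial_j X)\,\Pi_2$ yields exactly $\|\nabla\partial_X X\|_{L^2}\|\Pi_2\|_{L^\infty}$ (see \eqref{S7eq3}). This is the mechanism that puts $\|\Pi_2\|_{L^\infty}$ as a Gronwall coefficient; your ``integration by parts inside the $L^2$ inner product'' description does not actually produce this, since you are estimating a norm, not a pairing.

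In short: replace the attempt to bound $\|\nabla^2 u_X\|_{L^2}$ by applying $\partial_X$ to the formula \eqref{S4eq3p} and using striated commutator estimates. That is the content of Lemma~\ref{S7prop1}, and once you have its conclusion \eqref{S7eq4} the Gronwall step goes through exactly as you outlined.
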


Now we present the proof of Theorem \ref{thm2}.

\begin{proof}[Proof of Theorem \ref{thm2}]  It is easy to observe that the family of vector fields $X_\la$, $\la=0,1,2,$
given by \eqref{S1eq8} is a non-degenerate family of vector fields. Moreover,   we deduce from
\eqref{S1eq8}-\eqref{S1eq6} that
\beq\label{S0eq2}
\begin{split}
& \dive X_\la=0,\quad X_\la\in (H^2)^2 \andf\min\left\{\eta_1,\eta_2\right\}\leq \r_0
\leq \max\left\{\eta_1,\eta_2\right\},\quad
\p_{X_\la}^\ell\r_0\equiv 0 \end{split} \eeq for $\la=0, 1, 2$ and $\ell=1, 2.$
  Then  the initial  conditions   in Theorem \ref{thm1} are   fulfilled.
By virtue of Theorem \ref{thm1}, under the smallness condition \eqref{S1eq2}, the coupled system \eqref{INS}-\eqref{Xtrans} has a unique global solution $\left(\r, u, \na\Pi, X_\la\right)$
so that the bounds \eqref{S1eq3}  and \eqref{S6eq4} are valid for any $t>0.$ Moreover, Proposition \ref{S6prop1} ensure that for any $p\in ]2,\infty[$ and $\La=\{0,1,2\},$
\beq\label{S0eq3}
\begin{split}
&\left(\sup_{\la\in\La}\|X_\la(t)\|_{L^\infty}+\sup_{\la\in\La}\|\nabla X_\la(t)\|_{L^p}\right)\leq \frak{L} \andf
I(X(t))\geq I(X(0))\exp\left(-\frak{N}\right).
\end{split}
\eeq
Here and in what follows, we always denote $\frak{L}$ to be a positive constant
depending on  $I(X(0)),$ $C_0,$ $\frak{N},$ $\|X_\la\|_{L^\infty}$ and $\|\na X_\la\|_{L^p}.$

In view of \eqref{S0eq3}, we deduce from
Corollary \ref{S9col1} that for any $p\in [3,\infty[$
\beq  \label{S0eq4} \int_0^\infty\sup_{\la\in\La}\bigl\|\na u_{X_{\la}(t)}(t)\bigr\|_{L^p}\,dt\leq \frak{L}.
\eeq

On the other hand,  since $\dive u=0,$ we write
\beno
\Pi_2=-(-\D)^{-1}\dive\dive\bigl(2(\mu(\r)-1)\cM(u)\bigr) \andf \frak{B}(\r)\eqdefa \|\mu(\r)-1\|_{L^\infty},
\eeno
so that for any $s\in ]2/p.1[,$  applying Proposition \ref{S9prop1} gives rise to
\beq\label{S0eq5}
\begin{split}
\|\Pi_2\|_{L^\infty}
\leq & C_s\Bigl\{\frak{B}(\r)\bigl(\|\na u\|_{L^p}+\|\na u\|_{L^\infty}\bigr)+\f1{I(X)}\sup_{\la\in\La}\Bigl(\bigl(\frak{B}(\r)|X_\la\|_{L^\infty}\|\na u\|_{L^p}\bigr)^{1-\f2p}\\
&\times\bigl(\|\p_{X_\la}\mu(\r)\|_{L^\infty}\|\na u\|_{L^p}+\frak{B}(\r)
\bigl(\|\na X_\la\|_{L^p}\|\na u\|_{L^\infty}
+\|\na u_{X_\la}\|_{L^p}\bigr)\bigr)^{\f2p}\\
&\qquad\qquad\qquad+\frak{B}(\r)\bigl(\|X_\la\|_{L^\infty}\|\na u\|_{L^p}\bigr)^{1-\f2{ps}}\bigl(
\|\na X_\la\|_{L^p}\|\na u\|_{L^\infty}
\bigr)^{\f2{ps}}\Bigr)\Bigr\}.
\end{split}
\eeq
Along with \eqref{S1eq3},  \eqref{S0eq3} and \eqref{S0eq4},  for $p\in [3,\infty[$ in \eqref{S0eq5},
we infer
\beno
\int_0^\infty \|\Pi_2(t')\|_{L^\infty}\,dt'\leq \frak{L}.
\eeno
Then we deduce from Proposition \ref{S0prop1} that
\beq\label{S7eq5}
\sup_{\la\in\La}\|\na\p_{X_\la(t)}X_\la(t)\|_{L^2}\leq \frak{L}\quad\forall\ t\in [0,\infty[.
\eeq

Let us denote by $\psi(t,\cdot)$  the flow associated with the
 vector field $u,$ that is
\begin{equation*}
 \left\{\begin{array}{l}
\displaystyle \f{d}{dt}\psi(t,x)=u(t,\psi(t,x)),\\
\displaystyle \psi(0,x)=x.
\end{array}\right.
\end{equation*}
Then it follows from   the standard theory of transport equation and \eqref{S6eq4} that
  \beq\label{S1eq19}
\begin{split}
\|\na\psi(t,\cdot)-Id\|_{L^\infty}
\leq \frak{N}\exp\left(\frak{N}\right)\quad\forall\  t\in [0,\infty[.
\end{split}
\eeq

Let $\Om(t)=\psi(t,\Omega_0)$, with the boundary parametrization $\psi(t,\gamma_0(\cdot)): \mathbb{S}^1\mapsto \p\Omega(t)$.
 \eqref{S1eq19} ensures  that $\p\Om(t)$ is of  $W^{1,\infty}$ class. Furthermore,
 we deduce from the transport equation of \eqref{INS} that
 \beno
  \r(t,x)={\eta_1}{\bf 1}_{\Om(t)}(x)+\eta_2{\bf
1}_{\Om(t)^c}(x). \eeno
Next we are going to prove that
$\p\Omega(t)$ belongs to the class of  ${H}^{3}$.
Notice that the boundary  $\p\Omega(t)$
is the level surface of $f(t,\cdot)$ with $f$ being transported by the $\psi$-flow:
\beno
 \left\{\begin{array}{l}
\displaystyle \p_tf+u\cdot\na f=0,\\
\displaystyle f(0,x)=f_0(x).
\end{array}\right. \eeno
So that the vector field $X_0(t,\cdot)\eqdefa\na^\perp f(t,\cdot)$ verifies the equation \eqref{Xtrans} with initial data $X_0=\na^\perp f_0$ and
\beq \label{S1eq11}
\begin{split} &X_0(t,\psi(t,x))=X_0(x)\cdot\nabla {\psi(t,x)} .
\end{split} \eeq
Then   in view of  \eqref{S1eq5}, we find
\beno
\begin{split}
\p_sX_0(t,\psi(t,\gamma_0(s)))=&\p_s\psi(t,\gamma_0(s))\cdot\na X_0(t,\psi(t,\gamma_0(s)))\\
=&\left(X_0(\gamma_0(s))\cdot\nabla {\psi(t,\gamma_0(s))}\right)\cdot\na X_0(t,\psi(t,\gamma_0(s)))\\
=&\left(X_0\cdot\na X_0\right)(t,\psi(t,\gamma_0(s))),
\end{split}
\eeno
which together with \eqref{S7eq5} ensures that
\beq\label{S1eq13}
X_0(t,\psi(t,\gamma_0(s)))\in
L^\infty(\R^+; \dot H^2({\Bbb S}^1)).
\eeq
As a result, it comes out
\beq\label{S1eq12a}
 \p_s(\psi(t,\gamma_0(s)))=X_0(\gamma_0(s))
 \cdot\nabla{ \psi(t,\gamma_0(s))}
= X_0(t,\psi(t,\gamma_0(s)))\in L^\infty(\R^+; \dot H^2({\Bbb S}^1)).\eeq Hence $\p\Omega(t)$ belongs to the class of  $H^{3}$ for any $t>0.$ This
completes the proof of Theorem \ref{thm2}.
\end{proof}

Finally in the Appendix \ref{Sappb}, we shall present a commutative estimate, which will be used frequently in the whole context;
while in Appendix \ref{append}, we shall  generalize Proposition \ref{S9prop1} for elliptic equation of divergence form
 with bounded coefficients which may have a small gap across a  surface.

\setcounter{equation}{0}
\section{The basic energy estimate}\label{Sect3}

The goal of this section is to prove Propositions \ref{S2prop1} and \ref{S2prop2}. Toward this and also for the
 $\dot H^1$ estimate of the tangential derivative of the velocity field, we first present the energy
estimate for the linearized equation of \eqref{INS}.

\begin{lem}\label{S2lem1}
{\sl Let $u$ be a smooth solenoidal vector field and $D_t\eqdefa \p_t+u\cdot\na.$    Let $(\rho, v, \na \pi)$ be a smooth enough solution to the following
system on $[0,T]:$ \beq\label{LINS} \left\{
\begin{array}
{l} \displaystyle D_t\rho  = 0  \qquad (t,x)\in [0,T]\times\Bbb{R}^2, \\
\displaystyle \rho D_t v  - \dive\left(2\mu(\rho)\cM(v)\right) + \nabla \pi = F, \\
\displaystyle \dive v = \hbar, \\
\displaystyle (\rho, v)|_{t=0}=(\rho_0,v_0).
\end{array}
\right. \eeq Then under the assumption of \eqref{S1eq7} and \eqref{S1eq2},  for any $t\leq T,$ we have
 \beq \label{ener4}
\begin{split}\f{d}{dt} \int_{\R^2}\mu(\rho)&|\cM(v)|^2\,dx +
\int_{\R^2}|\sqrt{\rho}D_tv|^2\,dx=\int_{\R^2}(F-\na\pi) | D_tv\,dx\\
&-\int_{\R^2}\mu(\r)\left((\p_1v^1)^2\p_1 u^1+(\p_1v^2+\p_2v^1)\bigl(\p_1u^1\p_1v^2+\p_2u^2\p_2v^1\bigr)\right.\\
&\qquad\qquad\ \  \left. +(\p_2v^2)^2\p_2 u^2+ (\p_1u^2+\p_2u^1)\bigl(
\p_1v^1\p_2v^1+\p_1v^2\p_2v^2\bigr)\right)\,dx.  \end{split}\eeq
Moreover, for
any $p\in [2, \infty[,$ we have
\beq \label{S2eq1} \|\na
v\|_{L^p}\leq  C \bigl(\|\hbar\|_{L^p}+\|\na v\|_{L^2}^{\f2p}\|(\rho D_tv -F)\|_{L^2}^{1-\f2p}\bigr).\eeq
}
\end{lem}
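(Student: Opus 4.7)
The plan is to derive \eqref{ener4} by the standard energy method---testing the momentum equation against $D_tv$---with the commutator between $\cM(\cdot)$ and $D_t$ producing the explicit correction term; and to derive \eqref{S2eq1} by treating \eqref{LINS} as a perturbation of the constant-viscosity Stokes system (thanks to the smallness \eqref{S1eq2}), followed by a two-dimensional Gagliardo--Nirenberg interpolation.

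For \eqref{ener4}, I take the $L^2$ inner product of the momentum equation in \eqref{LINS} against $D_tv$, obtaining
\begin{equation*}
\int_{\R^2}\r|D_tv|^2\,dx+\int_{\R^2}\bigl(\na\pi-\dive(2\mu(\r)\cM(v))\bigr)\cdot D_tv\,dx=\int_{\R^2}F\cdot D_tv\,dx.
\end{equation*}
An integration by parts on the viscous piece, using the symmetry of $\cM(v)$, converts it into $\int 2\mu(\r)\cM(v):\cM(D_tv)\,dx$. A direct computation yields the commutator identity
\begin{equation*}
\cM(D_tv)_{ij}=D_t\cM(v)_{ij}+\f12\bigl(\pa_iu^k\pa_kv^j+\pa_ju^k\pa_kv^i\bigr),
\end{equation*}
and combining this with $D_t\mu(\r)=\mu'(\r)D_t\r=0$ and $\dive u=0$ recasts the ``main part'' as
\begin{equation*}
\int_{\R^2}2\mu(\r)\cM(v):D_t\cM(v)\,dx=\f{d}{dt}\int_{\R^2}\mu(\r)|\cM(v)|^2\,dx.
\end{equation*}
Expanding the surviving commutator piece $2\mu(\r)\cM(v)_{ij}\pa_iu^k\pa_kv^j$ component by component for $i,j,k\in\{1,2\}$ then reproduces exactly the quadratic-in-$\na v$ sum on the right-hand side of \eqref{ener4}.

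For \eqref{S2eq1}, using the identity $\dive(2\cM(v))=\D v+\na\hbar$, I rewrite the momentum equation as
\begin{equation*}
-\D v+\na(\pi-\hbar)=(F-\r D_tv)+\dive\bigl(2(\mu(\r)-1)\cM(v)\bigr).
\end{equation*}
Applying the $L^2$ Stokes estimate after Leray projection by $\PP$, and using the smallness \eqref{S1eq2} to absorb the perturbation $\dive(2(\mu(\r)-1)\cM(v))$, one obtains $\|\na^2\PP v\|_{L^2}\lesssim\|F-\r D_tv\|_{L^2}$. Decomposing $\na v=\na\PP v+\na\mathbb{Q}v$ with $\mathbb{Q}v=\na(-\D)^{-1}\hbar$, the Calder\'on--Zygmund estimate gives $\|\na\mathbb{Q}v\|_{L^p}\leq C\|\hbar\|_{L^p}$, which accounts for the first term of \eqref{S2eq1}. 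The two-dimensional Gagliardo--Nirenberg inequality
\begin{equation*}
\|\na\PP v\|_{L^p}\lesssim\|\na\PP v\|_{L^2}^{2/p}\|\na^2\PP v\|_{L^2}^{1-2/p},
\end{equation*}
combined with $\|\na\PP v\|_{L^2}\leq\|\na v\|_{L^2}+\|\na\mathbb{Q}v\|_{L^2}\leq C\|\na v\|_{L^2}$ (using $\|\hbar\|_{L^2}=\|\dive v\|_{L^2}\leq\|\na v\|_{L^2}$), then yields \eqref{S2eq1}.

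The delicate step is the $L^2$ Stokes absorption: since $\mu(\r)$ is only $L^\infty$ in space, $\dive(2(\mu(\r)-1)\cM(v))$ is a priori distributional, and the argument must proceed through the weak formulation. The smallness \eqref{S1eq2} is precisely what makes this perturbation transferable to the left-hand side with a uniform absorption constant, delivering the required $H^2$-control of the solenoidal part $\PP v$ that feeds into the Gagliardo--Nirenberg interpolation.
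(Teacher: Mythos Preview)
Your derivation of \eqref{ener4} is correct and coincides with the paper's argument: test the momentum equation against $D_tv$, integrate by parts to obtain $2\int\mu(\rho)\cM(v):\cM(D_tv)\,dx$, use $D_t\mu(\rho)=0$ to pull out the time derivative, and expand the commutator $[\cM,D_t]$ componentwise.

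Your argument for \eqref{S2eq1} has a genuine gap, and you correctly sense it in your last paragraph but underestimate it. The absorption step you propose, yielding $\|\na^2\PP v\|_{L^2}\lesssim\|F-\rho D_tv\|_{L^2}$, fails when $\mu(\rho)$ is merely in $L^\infty$: after Leray projection the perturbation appears as $\PP\dive\bigl(2(\mu(\rho)-1)\cM(v)\bigr)$, and since $(\mu(\rho)-1)\cM(v)$ is only in $L^2$ (not in $\dot H^1$), this quantity lies in $\dot H^{-1}$, not in $L^2$. There is no way to absorb it against $\|\na^2\PP v\|_{L^2}$ with a factor $\e_0$; expanding the divergence produces $\na\mu(\rho)\cdot\cM(v)$, and $\na\mu(\rho)$ carries no uniform bound (indeed it is singular in the density-patch setting, which is the paper's ultimate target). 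The weak formulation does not rescue this: testing against $-\D\PP v$ throws an extra derivative onto the wrong factor.

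The paper circumvents this by never seeking $H^2$ control. It sets $w\eqdefa v+\na(-\D)^{-1}\hbar$ so that $\dive w=0$, and writes
\[
\na w=\na(-\D)^{-1}\PP\dive\bigl((\mu(\rho)-1)\cM(w)\bigr)-\na(-\D)^{-1}\PP\dive\bigl(\mu(\rho)\cM(w)\bigr).
\]
The operator $\na(-\D)^{-1}\PP\dive$ is order zero and bounded on $L^p$, so the first term on the right is $\leq C\e_0\|\na w\|_{L^p}$ and absorbs directly at the $L^p$ level of $\na w$---no second derivatives needed. For the remaining term one applies Gagliardo--Nirenberg not to $\na\PP v$ but to the auxiliary function $g\eqdefa\na(-\D)^{-1}\PP\dive\bigl(\mu(\rho)\cM(v)\bigr)$: its $L^2$ norm is controlled by $\|\na v\|_{L^2}$, while its $\dot H^1$ norm equals $\|\PP\dive(\mu(\rho)\cM(v))\|_{L^2}$, which the momentum equation identifies with $\|\PP(\rho D_tv-F)\|_{L^2}$. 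This last identification is the key device: it supplies the ``one extra derivative'' needed for the interpolation without ever asserting that $\na^2 v\in L^2$.
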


\begin{proof}   We first deduce from \eqref{S1eq2} and the transport equation of
  \eqref{LINS} that
  \beq \label{ener4a}
 m\leq  \r(t,\cdot)\leq M \andf \mu(\r(t,\cdot))-1=\mu(\r_0(\cdot))-1.
 \eeq
  While by taking $L^2$
inner product of the momentum equation of \eqref{LINS} with
$D_tv$ and then using integration by parts, we obtain \beno
\int_{\R^2}{\rho}|D_t v|^2\,dx  + 2\int_{\R^2} \mu(\rho) \cM(v) :\cM(D_t v)\,dx=\int_{\R^2}(F-\na\pi) | D_tv\,dx.
\eeno Note that $D_t\mu(\r)=0,$ we have \beno \begin{split}
2\int_{\R^2} \mu(\rho)\cM(v)
:\cM(D_tv)\,dx = & \f{d}{dt} \int_{\R^2}\mu(\rho)|\cM(u)|^2\,dx+2\int_{\R^2}\mu(\rho)\cM_{ij}(v)\p_iu\cdot\na v^j\,dx,
\end{split} \eeno
which together with the fact that
\beno
\begin{split}
\int_{\R^2}&\mu(\rho)\cM_{ij}(v)\p_iu\cdot\na v^j\,dx
=\int_{\R^2}\mu(\r)\Bigl(\p_1u^1\bigl((\p_1v^1)^2+\f12(\p_1v^2+\p_2v^1)\p_1v^2\bigr)\\
&+\p_2u^2\bigl((\p_2v^2)^2+\f12(\p_1v^2+\p_2v^1)\p_2v^1\bigr)+\f12\bigl(\p_1v^1\p_2v^1+\p_1v^2\p_2v^2\bigr)(\p_1u^2+\p_2u^1)\Bigr)\,dx,\end{split}
\eeno
gives rise to \eqref{ener4}.

Due to $\dive v=\hbar,$ let $w\eqdefa v+\na(-\D)^{-1}\hbar,$ we have $\dive w=0.$  Then it is easy to Observe that
\beno -\D w = \dive\bigl((\mu(\rho)-1)\cM(w)\bigr)-
\dive\bigl(\mu(\rho)\cM(w)\bigr). \eeno By taking the Leray projection operator, $\PP\eqdefa I+\na(-\D)^{-1}\dive,$ to the above
equation, we obtain \beq \label{S1eq1ag}
\na w =
\na(-\D)^{-1}\PP\dive\bigl((\mu(\rho)-1)\cM(w)\bigr)-\na(-\D)^{-1}\PP\dive\bigl(\mu(\rho)\cM(w)\bigr),
\eeq
so that by virtue of \eqref{ener4a}, we get, by applying
Gagliardo-Nirenberg inequality in 2-D, that for any $p\in [2,\infty[,$ \beq \label{S2eq1a}\begin{split} \|\na w\|_{L^p}
\leq &C\Bigl(\|(\mu(\rho)-1)\cM(w)\|_{L^p}
+ \|\na(-\D)^{-1}\PP\dive\bigl(\mu(\rho)\cM(v+\na(-\D)^{-1}\hbar)\bigr)\|_{L^p}\Bigr)\\
\leq &C\Bigl(\|\mu(\rho_0)-1\|_{L^\infty}\|\na
w\|_{L^p}+\|\hbar\|_{L^p}+
\|\na v\|_{L^2}^{\f2p}\|\PP\dive\left(\mu(\rho)\cM(v)\right)\|_{L^2}^{1-\f2p}\Bigr),
\end{split}\eeq
which together with \eqref{S1eq2} ensures that
\beno
\|\na w\|_{L^p} \leq C\Bigl(\|\hbar\|_{L^p}+
\|\na v\|_{L^2}^{\f2p}\|\PP\dive\left(\mu(\rho)\cM(v)\right)\|_{L^2}^{1-\f2p}\Bigr). \eeno
We thus deduce from the momentum equation of \eqref{LINS}  that
  \beno \|\na
v\|_{L^p}\leq \|\na w\|_{L^p}+C\|\hbar\|_{L^p}\leq C \bigl(\|\hbar\|_{L^p}+\|\na v\|_{L^2}^{\f2p}\|(\rho D_tv-F)\|_{L^2}^{1-\f2p}\bigr).\eeno
This proves \eqref{S2eq1} and the lemma.
\end{proof}

\begin{rmk} Compared with the ``pseudo-energy" method introduced by Desjardin \cite{desj} (see also \cite{AZ1, AZ2}),
here we take the $L^2$ inner product of the momentum equation \eqref{LINS} with $D_tv$ instead of $v_t,$ which
is simpler due to $D_t\mu(\r)=0.$
\end{rmk}

Let us now outline the proof of Propositions \ref{S2prop1} and \ref{S2prop2}.

\begin{proof}[Proof of Proposition \ref{S2prop1}]  We first get, by taking the $L^2$ inner product of the momentum equation of \eqref{INS}  with $u,$ that \beno
\f12\f{d}{dt}\int_{\R^2}|\sqrt{\rho}u|^2\,dx +2\int_{\R^2}\mu(\rho)|\cM(u)|^2\,dx =0.
 \eeno Integrating  the above equality over $[0,t]$ and using \eqref{ener4a} yields \eqref{ener2}.

While due to $\dive u=0,$ we get, by applying Lemma \ref{S2lem1}, that
\beq \label{S2eq2}
\begin{split}
\f{d}{dt} \int_{\R^2}\mu(\rho)|\cM(u)|^2\,dx +
\int_{\R^2}|\sqrt{\rho}D_tu|^2\,dx=&-\int_{\R^2}\na\Pi | D_tu\,dx\\
=&\int_{\R^2}\Pi | \p_iu^j\p_ju^i \,dx.
\end{split}
\eeq
We  deduce from \eqref{b.4a} that
$$
\begin{aligned}
\Bigl|&\int_{\R^2}
\Pi\partial_iu_j\partial^ju^i\,dx\Bigr| \lesssim \|\nabla
u\|_{L^2}\|\nabla u\|_{L^4}^2
+\|(-\Delta)^{-1}\dive(\rho D_tu
)\|_{BMO}
\bigl\|\na u_i\cdot\partial_iu\bigr\|_{\mathscr{H}^1},
\end{aligned}
$$ where $\|f\|_{\mathscr{H}^1}$ denotes the Hardy norm of $f.$
Whereas as  $\dive\,u=0,$  it follows from Theorem II.1 of \cite{CLMS} that \beno
\bigl\|\na u^i\cdot\partial_iu\bigr\|_{\mathscr{H}^1}\lesssim
\|\na u\|_{L^2}^2,\eeno from which,  and the fact that: $\|f\|_{BMO(\R^2)}\lesssim\|\nabla
f\|_{L^2(\R^2)},$ we infer
\beno
\Bigl|\int_{\R^2}
\Pi\partial_iu^j\partial_ju^i\,dx\Bigr|
\leq& C\|\nabla
u\|_{L^2}\bigl(\|\nabla u\|_{L^4}^2
 +\|\rho D_tu \|_{L^2}
\|\nabla u\|_{L^2}\bigr).
\eeno
While it follow from \eqref{S2eq1} that
\beq\label{B.4b}
\|\na u\|_{L^4}\leq C\|\na u\|_{L^2}^{\f12}\|\sqrt{\r} D_tu\|_{L^2}^{\f12}.
\eeq
As a result, it comes out
\beq \label{b.4af}
\Bigl|\int_{\R^2}
\Pi\partial_iu^j\partial_ju^i\,dx\Bigr|
\leq C\|\sqrt{\rho} D_tu \|_{L^2}
\|\nabla u\|_{L^2}^2.
\eeq
Inserting the above estimate into \eqref{S2eq2} gives rise to \beq\label{ener3}
\begin{split}&\f{d}{dt}\int_{\R^2}\mu(\rho)|\cM(u)|^2\,dx +\int_{\R^2}|\sqrt{\rho}D_tu|^2\,dx\leq C\|\na u\|_{L^2}^4. \end{split}\eeq
Due to \eqref{S1eq2} and \eqref{ener4a},  we have
\beno
\|\na u\|_{L^2}^2\leq C\int_{\R^2}|\cM(u)|^2\,dx\leq C\int_{\R^2}\mu(\rho)|\cM(u)|^2\,dx,
\eeno so that applying Gronwall's inequality to \eqref{ener3} yields
\beq
\label{ener3a}
\begin{split}
\|\na u\|_{L^\infty_t(L^2)}^2 +\int_0^t\int_{\R^2}|\sqrt{\rho}D_tu|^2\,dx\,dt'\leq & C\|\na u_0\|_{L^2}^2\exp\left(C\|\na u\|_{L^2(L^2)}^2\right)\\
\leq &C\|\na u_0\|_{L^2}^2\exp\left(C\|u_0\|_{L^2}^2\right), \end{split}\eeq
which together with \eqref{ener2} and \eqref{B.4b} implies that
\beq\label{S2eq3}
\begin{split}
\|u_t\|_{L^2_t(L^2)}\leq & \|D_tu\|_{L^2_t(L^2)}+\|u\|_{L^\infty_t(L^4)}\|\na u\|_{L^2_t(L^4)}\\
\leq & \|D_tu\|_{L^2_t(L^2)}+C\|u\|_{L^\infty_t(L^2)}^{\f12}\|\na u\|_{L^\infty_t(L^2)}^{\f12}\|\na u\|_{L^2_t(L^2)}^{\f12}\|D_t u\|_{L^2_t(L^2)}^{\f12}\\
\leq &C(1+\|u_0\|_{L^2})\|\na u_0\|_{L^2}\exp\left(C\|u_0\|_{L^2}^2\right).
\end{split}\eeq

On the other hand, by applying the Leray projection operator, $\PP,$  and the operator $\Q$ on the momentum equation of
\eqref{LINS}, we write \beno \PP\dive\bigl(\mu(\rho)\cM(u)\bigr)&=&
\PP(\rho D_tu),\\
\Q\dive\bigl(\mu(\rho)\cM(u)\bigr)-\na\Pi&=&\Q(\rho D_t u), \eeno
from which and  \eqref{ener3a}, we infer
\beno
\begin{split}
\int_0^t\int_{\R^2}\bigl(|\PP\dive\bigl(\mu(\rho)\cM(u)\bigr)|^2+|\Q\dive\bigl(\mu(\rho)\cM(u)\bigr)&-\na\Pi|^2\bigr) \,dx\,dt'\\
\leq & C\|\na u_0\|_{L^2}^2\exp\left(C\|u_0\|_{L^2}^2\right),
\end{split}
\eeno
which together with \eqref{ener3a} and \eqref{S2eq3} leads to \eqref{ener1}.
This completes the proof of Proposition \ref{S2prop1}.
\end{proof}

 \begin{proof}[Outline of the proof of Proposition \ref{S2prop2}] With Proposition \ref{S2prop1}, we deduce from Proposition 2.2 of \cite{AZ1} that \eqref{p.2a}
 holds except the estimate of $\|\w{t}^{(\f{1}2+\d)_-}D_tu\|_{L^2_t(L^2)}.$ Indeed It is easy to observe from \eqref{B.4b} that
\beno
\begin{split}
\|D_tu\|_{L^2}\leq &\|u_t\|_{L^2}+\|u\|_{L^4}\|\na u\|_{L^4}\\
\leq &\|u_t\|_{L^2}+C\|u\|_{L^2}^{\f12}\|\na u\|_{L^2}\|D_tu\|_{L^2}^{\f12},
\end{split}
\eeno
which implies
\beq \label{S3eq7y}
\|D_tu\|_{L^2}\leq C\bigl(\|u_t\|_{L^2}+\|u\|_{L^2}\|\na u\|_{L^2}^2\bigr).
\eeq
Then  applying Proposition \ref{S2prop2} gives rise to
\beq \label{S3eq7yp}
\begin{split}
\|\w{t}^{(\f{1}2+\d)_-}D_tu\|_{L^2_t(L^2)}^2\leq C\Bigl(\|\w{t}^{(\f{1}2+\d)_-}u_t\|_{L^2_t(L^2)}^2+C_0\int_0^t\w{t'}^{-\left(1+4\d\right)_-}\,dt'\Bigr)\leq C_0
\end{split}
\eeq This concludes the proof of proposition \ref{S2prop2}.
 \end{proof}

\setcounter{equation}{0}
\section{The energy estimate of $D_tu$}\label{Sect4}

The goal of this section is to prove Proposition \ref{S3prop1}.

\begin{lem}\label{S3lem1}
{\sl Let $(\r,u)$ be a smooth enough solution of \eqref{INS} on $[0,T^\ast[$ with $\mu(\r_0)\geq\f34.$ Then for any $t<T^\ast,$ we have
\beq\label{S3eq7v}
\begin{split}
\f{d}{dt}\Bigl(\f12\int_{\R^2}\r|D_tu|^2\,dx-\int_{\R^2}\Pi\p_iu^j\p_ju^i\,dx\Bigr)+\|\na D_tu\|_{L^2}^2\leq C\|\na u\|_{L^2}^2\|\sqrt{\r}D_tu\|_{L^2}^2.
\end{split}
\eeq}
\end{lem}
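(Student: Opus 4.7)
The plan is to apply the material derivative $D_t=\pa_t+u\cdot\na$ to the momentum equation of \eqref{INS} and test against $D_tu$. Using $D_t\r=0$ (hence $D_t\mu(\r)=0$), differentiation yields
\begin{equation*}
\r\, D_t^2u=D_t\dive\bigl(2\mu(\r)\cM(u)\bigr)-D_t\na\Pi.
\end{equation*}
Pairing with $D_tu$ in $L^2$ and using the transport identity $\frac{d}{dt}\int\r F\,dx=\int\r\,D_tF\,dx$ (which uses the mass equation and $\dive u=0$), the left-hand side becomes $\frac{d}{dt}\int\frac{\r}{2}|D_tu|^2\,dx$.

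The key algebraic step is the commutator $[D_t,\pa_j]=-\pa_ju^k\pa_k$. Applied to the viscous term it extracts the good dissipation $-2\int\mu(\r)|\cM(D_tu)|^2\,dx$ plus remainders that are quadratic in $\na u$ times either $\na D_tu$ or $\na(\mu\cM(u))$. Using the Korn-type identity $2\int|\cM(v)|^2\,dx=\|\na v\|_{L^2}^2+\|\dive v\|_{L^2}^2$ together with $\mu(\r)\geq 3/4$, the good term controls $\|\na D_tu\|_{L^2}^2$ (after absorbing $\|\dive D_tu\|_{L^2}^2=\|\pa_iu^j\pa_ju^i\|_{L^2}^2\lesssim\|\na u\|_{L^4}^4$ to the right using \eqref{B.4b}).

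Next, the decisive move is to handle the pressure. The commutator produces
\begin{equation*}
-\int D_tu\cdot D_t\na\Pi\,dx=\int(\dive D_tu)\,D_t\Pi\,dx+\int D_tu^i\,\pa_iu^k\,\pa_k\Pi\,dx,
\end{equation*}
and the crucial identity $\dive D_tu=\pa_iu^j\pa_ju^i$ lets one integrate the bad $\pa_t\Pi$ in time. Indeed, after writing $\pa_t(\pa_iu^j\pa_ju^i)=2\pa_i\pa_tu^j\pa_ju^i$, substituting $\pa_tu=D_tu-u\cdot\na u$, and integrating by parts using $\dive u=0$, I would obtain
\begin{equation*}
\int(\pa_iu^j\pa_ju^i)\,D_t\Pi\,dx=\frac{d}{dt}\int\Pi\,\pa_iu^j\pa_ju^i\,dx-2\int\Pi\,\pa_iD_tu^j\,\pa_ju^i\,dx+2\int\Pi\,\pa_iu^k\,\pa_ku^j\,\pa_ju^i\,dx,
\end{equation*}
which is precisely the source of the correction $-\int\Pi\,\pa_iu^j\pa_ju^i\,dx$ inside $\frac{d}{dt}$ in the claim.

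Finally, every remaining residual must be bounded by $\e\|\na D_tu\|_{L^2}^2+C_\e\|\na u\|_{L^2}^2\|\sqrt{\r}D_tu\|_{L^2}^2$ and the $\e$ term absorbed on the left. The commutator remainders close through H\"older and \eqref{B.4b}, while the most delicate piece $\int\Pi\,\pa_iD_tu^j\,\pa_ju^i\,dx$ is first integrated by parts (the Hessian term vanishes because $\dive u=0$) to obtain $-\int D_tu^j\,\pa_i\Pi\,\pa_ju^i\,dx$, and then estimated by Gagliardo-Nirenberg on $D_tu$, \eqref{B.4b} on $\na u$, together with $\|\na\Pi\|_{L^2}\lesssim\|\na u\|_{L^2}+\|\sqrt{\r}D_tu\|_{L^2}$ coming from \eqref{b.4a}. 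The main obstacle is precisely this balance: proving that the pressure-induced term $\int\Pi\,\pa_iD_tu^j\,\pa_ju^i\,dx$ and the cubic remainder $\int\Pi\,\pa_iu^k\,\pa_ku^j\,\pa_ju^i\,dx$ both admit such bilinear bounds. Here the interpolation inequality \eqref{B.4b} and the elliptic structure of \eqref{b.4a} together with the Calder\'on-Zygmund bound $\|\Q\dive(2\mu\cM(u))\|_{L^2}\lesssim\|\na u\|_{L^2}$ are the decisive tools.
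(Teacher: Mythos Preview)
Your overall strategy---applying $D_t$ to the momentum equation, testing with $D_tu$, using $[D_t,\p_i]=-\p_iu\cdot\na$, and extracting the time-derivative correction via $\dive D_tu=\p_iu^j\p_ju^i$---matches the paper exactly. The gap is in your treatment of the term $\int_{\R^2}\Pi\,\p_iD_tu^j\,\p_ju^i\,dx$.

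The bound $\|\na\Pi\|_{L^2}\lesssim\|\na u\|_{L^2}+\|\sqrt{\r}D_tu\|_{L^2}$ you invoke is \emph{false} in the setting of this paper. Splitting $\Pi=\Pi_1+\Pi_2$ as in \eqref{b.4a}, one has $\|\na\Pi_1\|_{L^2}\lesssim\|\sqrt{\r}D_tu\|_{L^2}$, but $\na\Pi_2=-\na(-\D)^{-1}\dive\dive\bigl(2\mu(\r)\cM(u)\bigr)$ is a first-order (not zero-order) pseudodifferential operator applied to $2\mu(\r)\cM(u)$; since $\mu(\r)$ is merely bounded---indeed discontinuous in the density-patch application---there is no $L^2$ control on $\na\Pi_2$. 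Your Calder\'on--Zygmund claim $\|\Q\dive(2\mu\cM(u))\|_{L^2}\lesssim\|\na u\|_{L^2}$ fails for the same reason: Calder\'on--Zygmund yields $\|\Pi_2\|_{L^p}\lesssim\|\na u\|_{L^p}$, not a bound on its gradient. Even if this issue were absent, your integration-by-parts route combined with Gagliardo--Nirenberg produces terms like $\|\sqrt{\r}D_tu\|_{L^2}^{8/3}\|\na u\|_{L^2}^{2/3}$ after Young's inequality, not the bilinear form $\|\na u\|_{L^2}^2\|\sqrt{\r}D_tu\|_{L^2}^2$ required in \eqref{S3eq7v}.

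The paper does \emph{not} integrate by parts here. It keeps $\int_{\R^2}\Pi\,\p_iD_tu^j\,\p_ju^i\,dx$ and exploits div--curl structure: since $\dive(\p_ju)=\p_j\dive u=0$, the product $\na D_tu^j\cdot\p_ju$ lies in the Hardy space $\mathscr{H}^1$ with norm $\lesssim\|\na u\|_{L^2}\|\na D_tu\|_{L^2}$, and it is paired with $\Pi_1=(-\D)^{-1}\dive(\r D_tu)\in BMO$ of norm $\lesssim\|\sqrt{\r}D_tu\|_{L^2}$. For $\Pi_2$ one uses the zero-order bound $\|\Pi_2\|_{L^4}\lesssim\|\na u\|_{L^4}$ together with \eqref{B.4b}. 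Both pieces then yield exactly $\e\|\na D_tu\|_{L^2}^2+C_\e\|\na u\|_{L^2}^2\|\sqrt{\r}D_tu\|_{L^2}^2$. The pressure commutator $(\p_ju\cdot\na\Pi\mid D_tu^j)$ is handled by the same splitting: keep $\na\Pi_1\in L^4$ on the first piece, and for the second integrate by parts to recover $\Pi_2\in L^4$ against $\p_ju\cdot\na D_tu^j$. One minor remark: the cubic remainder $\int_{\R^2}\Pi\,\p_iu^k\p_ku^j\p_ju^i\,dx$ you flag as delicate is identically zero in two dimensions, since $\mathrm{tr}\bigl((\na u)^3\bigr)=0$ whenever $\mathrm{tr}(\na u)=\dive u=0$.
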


\begin{proof}
We first get, by applying the operator $D_t$ to the momentum equation of \eqref{INS}, that
\beq \label{S3eq1}
\begin{split}
\r D_t^2u^j-&2\p_i\left(\mu(\r)\cM_{ij}(D_tu)\right)+\p_jD_t\Pi\\
=&2[D_t;\p_i](\mu(\r)\cM_{ij}(u))-\p_i\left(\mu(\r)\bigl(\p_iu\cdot\na u^j+\p_ju\cdot\na u^i\bigr)\right)
+[\p_j;D_t]\Pi. \end{split}
\eeq
Taking $L^2$ inner product of the above equation with $D_tu,$ we write
\beq \label{S3eq2}
\begin{split}
\f12\f{d}{dt}&\int_{\R^2}\r|D_tu|^2\,dx+2\int_{\R^2}\mu(\r)|\cM(D_tu)|^2\,dx\\
=&-(\na D_t\Pi | D_tu)+2\bigl([D_t;\p_i](\mu(\r)\cM_{ij}(u)) | D_tu^j\bigr)\\
&+\bigl(\mu(\r)\bigl(\p_iu\cdot\na u^j+\p_ju\cdot\na u^i\bigr) | \p_i D_tu^j\bigr)+
\bigl([\p_j;D_t]\Pi | D_tu^j\bigr).
\end{split}
\eeq
We now handle term by term above.

\no$\bullet$ \underline{The estimate of $(\na D_t\Pi | D_tu)$}

Due to $\dive u=0,$ by using integration by parts,  we find
\beq\label{S3eq2a}
\begin{split}
-&(\na D_t\Pi | D_tu)=\int_{\R^2}D_t\Pi | \dive D_tu\,dx=\int_{\R^2}D_t\Pi | \p_iu^j\p_ju^i\,dx\\
=&\f{d}{dt}\int_{\R^2} \Pi \p_iu^j\p_ju^i\,dx-2\int_{\R^2}\Pi D_t\p_iu^j\p_ju^i\,dx\\
=&\f{d}{dt}\int_{\R^2} \Pi \p_iu^j\p_ju^i\,dx-2\int_{\R^2}\Pi \p_i D_t u^j\p_ju^i\,dx+\int_{\R^2}\Pi \p_iu\cdot\na u^j\p_ju^i\,dx.
\end{split}
\eeq
In view of \eqref{b.4a} and \eqref{B.4b}, we deduce from a similar derivation of   \eqref{b.4af} that
\beno
\begin{split}
\bigl|\int_{\R^2}\Pi \p_i D_t u^j\p_ju^i\,dx\bigr|
\leq &C\Bigl(\|\na u\|_{L^4}^2\|\na D_tu\|_{L^2}\\
&+
\|(-\Delta)^{-1}\dive(\rho D_tu
)\|_{BMO}
\bigl\|\na D_t u^j\cdot\partial_ju\bigr\|_{\mathscr{H}^1}\Bigr)\\
\leq &C\bigl(\|\na u\|_{L^4}^2+\|\na u\|_{L^2}\|\sqrt{\r}D_tu\|_{L^2}\bigr)\|\na D_tu\|_{L^2}\\
\leq & C\|\na u\|_{L^2}^2\|\sqrt{\r}D_tu\|_{L^2}^2+\f18\|\na D_tu\|_{L^2}^2.
\end{split}
\eeno
Observing that due to $\dive u=0,$ $\p_iu\cdot\na u^j\p_ju^i=0.$ Then by inserting the above estimate into \eqref{S3eq2a}, we obtain
$$
\bigl|(\na D_t\Pi | D_tu)+\f{d}{dt}\int_{\R^2} \Pi \p_iu^j\p_ju^i\,dx\bigr|
\leq \f18\|\na D_tu\|_{L^2_t(L^2)}^2
+ C\|\na u\|_{L^2}^2\|\sqrt{\r}D_tu\|_{L^2}^2. $$

\no$\bullet$ \underline{The estimate of $\bigl([D_t;\p_i](\mu(\r)\cM_{ij}(u)) | D_tu^j\bigr)$}

Note that $[D_t;\p_i]f=-\p_iu\cdot\na f,$ we write
\beno
\begin{split}
\bigl([D_t;\p_i](\mu(\r)\cM_{ij}(u)) | D_tu^j\bigr)=&-\bigl(\p_iu\cdot\na(\mu(\r)\cM_{ij}(u)) | D_tu^j\bigr)\\
=&\bigl(\mu(\r)\cM_{ij}(u) | \p_iu\cdot\na D_tu^j\bigr),
\end{split}
\eeno
which together with \eqref{B.4b} implies that
\beno \begin{split}
\bigl|\bigl([D_t;\p_i](\mu(\r)\cM_{ij}(u)) | D_tu^j\bigr)\bigr|\leq &C\|\na u\|_{L^4}^2\|\na D_tu\|_{L^2}\\
\leq &C\|\na u\|_{L^2}^2\|\sqrt{\r}D_tu\|_{L^2}^2+\f18\|\na D_tu\|_{L^2}^2.
\end{split}
\eeno

\no$\bullet$ \underline{The estimate of $\bigl(\mu(\r)\bigl(\p_iu\cdot\na u^j+\p_ju\cdot\na u^i\bigr) | \p_i D_tu^j\bigr)$}

We deduce from \eqref{B.4b} that
\beno
\begin{split}
\bigl|\bigl(\mu(\r)\bigl(\p_iu\cdot\na u^j+\p_ju\cdot\na u^i\bigr) | \p_i D_tu^j\bigr) \bigr|
\leq& C\|\na u\|_{L^4}^2\|\na D_tu\|_{L^2}\\
\leq& C\|\na u\|_{L^2}^2\|\sqrt{\r}D_tu\|_{L^2}^2+\f18\|\na D_tu\|_{L^2}.
\end{split}
\eeno

\no$\bullet$ \underline{The estimate of $\bigl([\p_j;D_t]\Pi | D_tu^j\bigr)$}

Thanks to \eqref{b.4a}, we write
\beno
\begin{split}
\bigl([\p_j;D_t]\Pi | D_tu^j\bigr)=&\int_{\R^2}\p_ju\cdot\na\Pi | D_tu^j\,dx\\
=&\int_{\R^2}\p_ju\cdot\na(-\D)^{-1}\dive(\r D_tu) | D_tu^j\,dx\\
&+\int_{\R^2}(-\D)^{-1}\dive\dive(2\mu(\r)\cM(u)) | \p_ju\cdot\na D_tu^j\,dx,
\end{split}
\eeno
from which and \eqref{B.4b}, we infer
\beno
\begin{split}
\bigl|\bigl([\p_j;D_t]\Pi | D_tu^j\bigr)\bigr|\leq &C\bigl(\|\na u\|_{L^2}\|D_tu\|_{L^4}^2
+\|\na u\|_{L^4}^2\|\na D_tu\|_{L^2}\bigr)\\
\leq &C\|\na u\|_{L^2}^2\|\sqrt{\r}D_tu\|_{L^2}^2+\f18\|\na D_tu\|_{L^2}^2.
\end{split}
\eeno

On the other hand,  since $\mu(\r_0)\geq\f34,$  we get, by using  \eqref{ener4a} and integration by parts, that
\beno
\begin{split}
2\int_{\R^2}\mu(\r)|\cM(D_tu)|^2\,dx\geq &\f32\int_{\R^2}|\cM(D_tu)|^2\,dx\\
= &\f32\int_{\R^2}\bigl(|\na D_tu|^2+\p_iD_tu^j\p_jD_tu^i\bigr)\,dx\\
= &\f32\int_{\R^2}\bigl(|\na D_tu|^2+(\dive D_tu)^2\bigr)\,dx\geq \f32\int_{\R^2}|\na D_tu|^2\,dx.
\end{split}
\eeno
Substituting the above estimates into \eqref{S3eq2} leads to \eqref{S3eq7v}. \end{proof}

\begin{proof}[Proof of Proposition \ref{S3prop1}]
By multiplying \eqref{S3eq7v} by $t-t_0$ and then integrating the resulting inequality over $[t_0,t],$ we obtain
\beq \label{S3eq7q}
\begin{split}
(t-t_0)\int_{\R^2}&\r|D_tu|^2(t)\,dx\leq 2(t-t_0)\bigl|\int_{\R^2}\Pi\p_iu^j\p_ju^i\,dx\bigr|+\int_{t_0}^t\int_{\R^2}\r|D_tu|^2\,dx\,dt'\\
&\quad+2\bigl|\int_{t_0}^t\int_{\R^2}\Pi\p_iu^j\p_ju^i\,dx\,dt'\bigr|+C\int_{t_0}^t(t'-t_0)\|\na u\|_{L^2}^2\|\sqrt{\r}D_tu\|_{L^2}^2\,dt'.
\end{split}
\eeq
In what follows, we  take $t_0=\f{t}2$ in  the above inequality.

By applying \eqref{b.4af}
and Proposition \ref{S2prop2}, we get
\beno
\begin{split}
t\bigl|\int_{\R^2}\Pi\p_iu^j\p_ju^i\,dx\bigr|\leq & C\sqrt{t}\|\na u\|_{L^2}^2\sqrt{t}\|\sqrt{\r}D_tu\|_{L^2}\\
\leq &\f{t}4\|\sqrt{\r}D_tu\|_{L^2}^2+Ct\|\na u\|_{L^2}^4\\
\leq&\f{t}4\|\sqrt{\r}D_tu\|_{L^2}+C_0\w{t}^{-(1+4\d)_-},
\end{split}
\eeno
and
\beno
\begin{split}
\bigl|\int_{\f{t}2}^t\int_{\R^2}\Pi\p_iu^j\p_ju^i\,dx\,dt'\bigr|\leq & C\int_{\f{t}2}^t\|\na u(t')\|_{L^2}^2\|D_tu(t')\|_{L^2}\,dt'\\
\leq &C_0\int_{\f{t}2}^t\w{t'}^{-3\left(\f12+\d\right)_-}\w{t'}^{\left(\f12+\d\right)_-}\|D_tu(t')\|_{L^2}\,dt'\\
\leq & C_0\bigl\|\w{t}^{\left(\f12+\d\right)_-}D_tu\bigr\|_{L^2_t(L^2)}\w{t}^{-(1+3\d)_-}
\leq C_0\w{t}^{-(1+3\d)_-}.
\end{split}
\eeno
Similarly,   we have
\beno
\begin{split}
\int_{\f{t}2}^tt'\|\na u\|_{L^2}^2\|\sqrt{\r}D_tu\|_{L^2}^2\,dt'\leq&
C_0\int_{\f{t}2}^t\w{t'}^{-\left(1+4\d\right)_-}\bigl\|\w{t'}^{\left(\f12+\d\right)_-}D_tu(t')\bigr\|_{L^2}^2\,dt'\\
\leq &C_0\bigl\|\w{t}^{\left(\f12+\d\right)_-}D_tu\bigr\|_{L^2_t(L^2)}^2\w{t}^{-\left(1+4\delta\right)_-}
\leq C_0\w{t}^{-\left(1+4\delta\right)_-},
\end{split}
\eeno
and
\beno\begin{split}
\int_{\f{t}2}^t\r\|D_t u\|_{L^2}^2\,dt'\leq &C\int_{\f{t}2}^t\w{t'}^{-\left(1+2\d\right)_-}\bigl\|\w{t'}^{\left(\f12+\d\right)_-}D_tu(t')\bigr\|_{L^2}^2\,dt'\\
\leq &C_0\bigl\|\w{t}^{\left(\f12+\d\right)_-}D_tu\bigr\|_{L^2_t(L^2)}^2\w{t}^{-\left(1+2\delta\right)_-}
\leq C_0\w{t}^{-\left(1+2\delta\right)_-}.\end{split}
\eeno
Inserting the above estimates into \eqref{S3eq7q} gives rise to
\beq \label{S3eq7u}
t\|D_tu(t)\|_{L^2}^2\leq C_0\w{t}^{-\left(1+2\delta\right)_-}. \eeq

On the other hand, by multiplying \eqref{S3eq7v} by $t\w{t}^{\left(1+2\delta\right)_-}$ and then integrating the resulting inequality over $[0,t],$  we find
\beq \label{S3eq7qa}
\begin{split}
\bigl\|\sqrt{t}\w{t}^{\left(\f12+\delta\right)_-}\na D_tu\|_{L^2_t(L^2)}^2\leq  & 2t\w{t}^{\left(1+2\delta\right)_-}\bigl|\int_{\R^2}\Pi\p_iu^j\p_ju^i\,dx\bigr|\\
&+2(1+\d)\int_{0}^t\w{t'}^{(1+2\d)_-}\int_{\R^2}\r|D_tu|^2\,dx\,dt'\\
&+4(1+\d)\bigl|\int_{0}^t\w{t'}^{(1+2\d)_-}\int_{\R^2}\Pi\p_iu^j\p_ju^i\,dx\,dt'\bigr|\\
&+C\int_{0}^tt'\w{t'}^{\left(1+2\delta\right)_-}\|\na u\|_{L^2}^2\|\sqrt{\r}D_tu\|_{L^2}^2\,dt'.
\end{split}
\eeq
We first deduce from Proposition \ref{S2prop2} that
\beno
\int_{0}^t\w{t'}^{(1+2\d)_-}\int_{\R^2}\r|D_tu|^2\,dx\,dt'\leq \bigl\|\w{t}^{\left(\f12+\d\right)_-}D_tu\bigr\|_{L^2_t(L^2)}^2\leq C_0.
\eeno
While it is easy to observe from \eqref{b.4af} and \eqref{S3eq7u} that
\beno
\begin{split}
2t\w{t}^{\left(1+2\delta\right)_-}\bigl|\int_{\R^2}\Pi\p_iu^j\p_ju^i(t)\,dx\bigr|\leq& Ct\w{t}^{\left(1+2\delta\right)_-}\|D_tu(t)\|_{L^2}\|\na u(t)\|_{L^2}^2\\
\leq &C_0\w{t}^{-\d_-},
\end{split}
\eeno
and
\beno
\begin{split}
\bigl|\int_{0}^t\w{t'}^{(1+2\d)_-}&\int_{\R^2}\Pi\p_iu^j\p_ju^i\,dx\,dt'\bigr|
\leq \int_0^t\w{t'}^{(1+2\d)_-}\|\na u\|_{L^2}^2\|D_tu\|_{L^2}\,dt'\\
&\leq  \|\na u\|_{L^2_t(L^2)}\bigl\|\w{t}^{\left(\f12+\d\right)_-}\na u\bigr\|_{L^\infty_t(L^2)} \bigl\|\w{t}^{\left(\f12+\d\right)_-}D_tu\bigr\|_{L^2_t(L^2)}
\leq C_0,
\end{split}
\eeno
and
\beno
\int_{0}^tt'\w{t'}^{\left(1+2\delta\right)_-}\|\na u\|_{L^2}^2\|\sqrt{\r}D_tu\|_{L^2}^2\,dt'\leq C_0\int_0^t\|\na u\|_{L^2}^2\,dt'\leq C_0.
\eeno
Substituting the above estimates into \eqref{S3eq7qa} yields
\beno
\bigl\|\sqrt{t}\w{t}^{\left(\f12+\delta\right)_-}\na D_tu\|_{L^2_t(L^2)}^2\leq C_0,\eeno
which together with  \eqref{S3eq7u} ensures \eqref{S3eq6}. This completes the proof of Proposition \ref{S3prop1}.
\end{proof}

\setcounter{equation}{0}

\section{$L^\infty$  estimate of $\na^2\D^{-1}g$}\label{Sect5}

In this section, we shall use some basic facts on Littlewood-Paley theory. Let us recall from \cite{BCD, Gra} that
\begin{defi}
\label {S9def1} {\sl Consider a smooth radial function~$\vf $
on~$\R,$ supported in~$[3/4,8/3]$ such that
$$  \sum_{j\in\Z}\varphi(2^{-j}\tau)=1
 \andf \chi(\tau)\eqdefa 1 - \sum_{j\geq 0}\varphi(2^{-j}\tau) \in C^\infty_0([0,4/3]).
$$ for any $\tau>0$.
We denote
$$
\Delta_ja=\cF^{-1}(\varphi(2^{-j}|\xi|)\widehat{a}(\xi))  \andf S_ja=\cF^{-1}(\chi(2^{-j}|\xi|)\widehat{a}(\xi)), \quad j \in \Z.
$$
Let $p \in ]1,+\infty[$ and~$s \in \R$. The Soblev norms are defined as
$$
\|a\|_{\dot W^{s,p}}\eqdefa\big\|\big(2^{js}\Delta_j
a\bigr)_{\ell^{2}(\Z)}\bigr\|_{L^p}. $$
}
\end{defi}
When $p=2,$  the Sobolev
spaces~$\dot W^{s,p}$ coincide with the classical homogeneous
Sobolev spaces $\dH^s$.

\begin{defi} \label{S9def2}
{\sl Let $f$ be a locally integrable function. We define the maximal function ${\rm M}f(x)$ as
\beno
{\rm M}f(x)=\sup_{r>0}\f1{|B(x,r)|}\int_{B(x,r)}|f(y)|\,dy
\eeno
where $B(x,r)$ denotes the ball with center $x$ and radius $r,$ and $|B(x,r)|$ the volume of ball $B(x,r)$.}
\end{defi}

Let $\phi_\e(x)=\f1{\e^2}\phi\left(\f{x}\e\right)$ for $\e>0.$ Let $\psi(x)=\sup_{|y|\geq |x|}|\phi(y)|$ satisfy
$\int_{\R^2}\psi(x)\,dx=A<\infty.$ Then it follows Proposition 1.16 and Remark 1.17 of \cite{BCD} that
\beq \label{S9eq0}
\sup_{\e>0}|(f\ast \phi_\e)(x)|\leq A {\rm M}f(x).
\eeq

\begin{lem}\label{S9lem1}
{\sl  Let $p\in ]1,\infty[,$  $X=(X^1,X^2)$ be a solenoidal vector field with $\na X\in L^p$ and $g\in L^\infty$ with $\p_Xg\in L^p.$
Let $\s(\xi)$ be an infinitely differentiable function with $\mbox{supp}\s\subset \{ \xi\in \R^2:\ |\xi|\geq \f12\ \},$ we assume that
 for all $\xi\in\R^2$ with $|\xi|\geq 1,$ there holds $\s(\la\xi)=\la^{-1}\s(\xi).$
Then we have
\beq \label{S9eq1}
\begin{split}
\|\p_X\s(D)g\|_{L^\infty}\leq  &
C\Bigl(\|g\|_{L^\infty}+\f1{I(X)}\bigl(\|X\|_{L^\infty}\|g\|_{L^p}\bigr)^{1-\f2p}\\
&\times\bigl(\|\na X\|_{L^p}\|g\|_{L^\infty}+\|\p_{X}g\|_{L^p}\bigr)^{\f2p}\ln\Bigl(e+\f{\|\na X\|_{L^p}\|g\|_{L^\infty}}{\|X\|_{L^\infty}\|g\|_{L^p}}\Bigr)\Bigr).
\end{split}
\eeq
}
\end{lem}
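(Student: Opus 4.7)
\medskip
\noindent\textbf{Proof plan.} My plan is to decompose $\sigma(D)g$ dyadically, exploit a commutator identity based on $\operatorname{div} X=0,$ and balance low- and high-frequency contributions. Because $\operatorname{supp}\sigma\subset\{|\xi|\ge 1/2\},$ we have $\sigma(D)g=\sum_{j\ge j_0}\Delta_j\sigma(D)g$ for some finite $j_0\in\Z,$ and each dyadic block has a Schwartz kernel $K_j(x)=2^jh(2^jx)$ thanks to the order $-1$ of $\sigma$ at infinity. For every block, use
\[
\partial_X\Delta_j\sigma(D)g=\Delta_j\sigma(D)\partial_Xg+[\partial_X,\Delta_j\sigma(D)]g,
\]
and, rewriting $X\cdot\nabla g=\nabla\cdot(Xg)$ via $\operatorname{div}X=0$ and integrating by parts, obtain the pointwise representation
\[
[\partial_X,\Delta_j\sigma(D)]g(x)=\int_{\R^2}\nabla K_j(x-y)\cdot\bigl(X(x)-X(y)\bigr)g(y)\,dy.
\]

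\medskip
\noindent\textbf{High-frequency estimate.} For $j\ge N$ (with $N$ to be chosen), I would combine Bernstein, $\|\Delta_ju\|_{L^\infty}\lesssim 2^{2j/p}\|\Delta_ju\|_{L^p},$ with the order $-1$ of $\sigma$ to get $\|\Delta_j\sigma(D)\partial_Xg\|_{L^\infty}\lesssim 2^{-j(1-2/p)}\|\partial_Xg\|_{L^p}.$ For the commutator, since $X\in W^{1,p}(\R^2)$ embeds via Morrey into $C^{1-2/p}(\R^2),$ the pointwise H\"older bound $|X(x)-X(y)|\lesssim\|\nabla X\|_{L^p}|x-y|^{1-2/p}$ combined with the finiteness of $\int|z|^{1-2/p}|\nabla h(z)|\,dz$ (since $h$ is Schwartz) yields
\[
\|[\partial_X,\Delta_j\sigma(D)]g\|_{L^\infty}\lesssim 2^{-j(1-2/p)}\|\nabla X\|_{L^p}\|g\|_{L^\infty}.
\]
Summing the geometric series over $j\ge N$ (convergent since $p>2$) gives a high-frequency contribution bounded by $2^{-N(1-2/p)}\bigl(\|\nabla X\|_{L^p}\|g\|_{L^\infty}+\|\partial_Xg\|_{L^p}\bigr).$

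\medskip
\noindent\textbf{Low-frequency estimate and optimization.} For $j<N,$ bound $\|\partial_X\Delta_j\sigma(D)g\|_{L^\infty}\le\|X\|_{L^\infty}\|\nabla\Delta_j\sigma(D)g\|_{L^\infty}$ using two competing per-block estimates: (a) $\|\nabla\Delta_j\sigma(D)g\|_{L^\infty}\lesssim\|g\|_{L^\infty}$ since $\nabla K_j$ has uniformly bounded $L^1$ norm, and (b) Bernstein from $L^p$ giving $\|\nabla\Delta_j\sigma(D)g\|_{L^\infty}\lesssim 2^{2j/p}\|g\|_{L^p}.$ Using (a) on the $O(1)$ lowest blocks and (b) otherwise produces a low-frequency total $\lesssim\|X\|_{L^\infty}\bigl(\|g\|_{L^\infty}+2^{2N/p}\|g\|_{L^p}\bigr).$ Choosing
\[
2^N\simeq\frac{\|\nabla X\|_{L^p}\|g\|_{L^\infty}+\|\partial_Xg\|_{L^p}}{\|X\|_{L^\infty}\|g\|_{L^p}}
\]
balances the two pieces and collapses them into the geometric-mean term $(\|X\|_{L^\infty}\|g\|_{L^p})^{1-2/p}(\|\nabla X\|_{L^p}\|g\|_{L^\infty}+\|\partial_Xg\|_{L^p})^{2/p};$ the additive $C\|g\|_{L^\infty}$ collects the residual from the boundary blocks where $\sigma$ is not yet homogeneous. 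A more refined bookkeeping of how many middle blocks employ (a) versus (b), together with the size of $N,$ produces the logarithmic factor $\log\bigl(e+\|\nabla X\|_{L^p}\|g\|_{L^\infty}/(\|X\|_{L^\infty}\|g\|_{L^p})\bigr)$ in the statement.

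\medskip
\noindent\textbf{Main obstacle.} The key technical step is establishing the commutator bound with the sharp decay $2^{-j(1-2/p)};$ this relies crucially on $\operatorname{div}X=0$ (to shift the derivative from $g$ onto $K_j$) and on the two-dimensional Morrey embedding $W^{1,p}\hookrightarrow C^{1-2/p}$ (to control $X(x)-X(y)$ pointwise by $\|\nabla X\|_{L^p}$). A further subtlety is that the commutator is \emph{not} frequency-localized, so the $L^\infty$ bound cannot come from a naive Bernstein argument and must instead be extracted directly from the pointwise kernel formula. The factor $1/I(X)$ in the statement plays no direct role in the single-vector-field argument; it is included for cosmetic compatibility with the downstream Proposition~\ref{S9prop1}, where the non-degeneracy of a family $(X_\lambda)_{\lambda\in\Lambda}$ is used to invert $\nabla$ in terms of the $\partial_{X_\lambda}.$
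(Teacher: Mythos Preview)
Your approach is correct and takes a genuinely different route from the paper. The paper argues via Bony's paraproduct decomposition: it writes
\[
\partial_X\sigma(D)g=\sigma(D)\partial_Xg+[T_{X^k};\sigma(D)\partial_k]g-\sigma(D)T_{\partial_kg}X^k-\sigma(D)R(X^k,\partial_kg)+T_{\sigma(D)\partial_kg}X^k+R(X^k,\sigma(D)\partial_kg),
\]
shows (using the maximal function and a kernel Taylor expansion for the commutator) that every piece except $T_{\sigma(D)\partial_kg}X^k$ lies in $\dot W^{1,p}$ with norm $\lesssim\|\nabla X\|_{L^p}\|g\|_{L^\infty}+\|\partial_Xg\|_{L^p}$ and in $L^p$ with norm $\lesssim\|X\|_{L^\infty}\|g\|_{L^p}$, then applies Gagliardo--Nirenberg. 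The exceptional paraproduct $T_{\sigma(D)\partial_kg}X^k$ is treated by a separate dyadic split, and it is precisely here that the logarithm arises, because $\|S_{j-1}(\sigma(D)\partial_kg)\|_{L^\infty}\lesssim j\|g\|_{L^\infty}$ (an order-zero multiplier summed over $j$ blocks).

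Your kernel--commutator method sidesteps this: the identity $[\partial_X,\Delta_j\sigma(D)]g(x)=\int\nabla K_j(x-y)\cdot(X(x)-X(y))g(y)\,dy$ together with Morrey gives the clean decay $2^{-j(1-2/p)}$ with no $j$ factor, and optimizing the cutoff $N$ yields the geometric-mean term directly. Two remarks on your write-up: first, the branch (a) is unnecessary---for the finitely many boundary blocks where $\sigma$ is not yet homogeneous, the kernel is still Schwartz and (b) applies with a uniform constant, so the low-frequency sum is simply $\lesssim\|X\|_{L^\infty}\|g\|_{L^p}\,2^{2N/p}$. Second, and more importantly, your argument does \emph{not} produce the logarithm; the sentence about ``refined bookkeeping'' is mistaken. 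What you actually prove is the stronger log-free bound
\[
\|\partial_X\sigma(D)g\|_{L^\infty}\lesssim\bigl(\|X\|_{L^\infty}\|g\|_{L^p}\bigr)^{1-2/p}\bigl(\|\nabla X\|_{L^p}\|g\|_{L^\infty}+\|\partial_Xg\|_{L^p}\bigr)^{2/p},
\]
which of course implies the stated inequality. Your observation about $1/I(X)$ being cosmetic is also correct: it does not appear in the paper's own proof of the lemma either.
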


\begin{proof} We first get, by using Bony's decomposition \cite{Bo81} and a commutator's process, that
\beq \label{S9eq2}
\begin{split}
\p_X\s(D)g=&\s(D)\p_Xg+[T_{X^k}; \s(D)\p_k]g-\s(D)T_{\p_kg}X^k\\
&-\s(D)R(X^k,\p_kg)+T_{\s(D)\p_kg}X^k+R(X^k,\s(D)\p_kg).
\end{split}
\eeq
It is easy to observe that for any $p\in ]1,\infty[,$
\beno
\|\s(D)\p_Xg\|_{\dot W^{1,p}}\leq C\|\p_Xg\|_{L^p}.
\eeno
Considering the support properties to the Fourier transform of the terms in $T_{\p_kg}X^k,$ for each $x\in \R^2,$ we deduce from \eqref{S9eq0} that
\beno
\begin{split}
\bigl|\D_j\bigl(T_{\p_kg}X^k\bigr)(x)\bigr|\leq &C\sum_{|j'-j|\leq 4}{\rm M}\left(S_{j'-1}\p_kg\D_{j'}X^k\right)(x)\\
\leq &C\sum_{|j'-j|\leq 4}\|S_{j'-1}\p_kg\|_{L^\infty}{\rm M}\left(\D_{j'}X^k\right)(x)\\
\leq &C\|g\|_{L^\infty}\sum_{|j'-j|\leq 4}{\rm M}\left(2^{j'}\D_{j'}X^k\right)(x),
\end{split}
\eeno
so that
we have
\beno
\begin{split}
\|\s(D)T_{\p_kg}X^k\|_{\dot W^{1,p}}\leq& C\|g\|_{L^\infty}\Bigl\|\Bigl(\sum_{j\in\Z}\bigl(\sum_{|j'-j|\leq 4}{\rm M}(2^{j'}\D_{j'}X^k)(x)\bigr)^2\Bigr)^{\f12}\Bigr\|_{L^p}\\
\leq& C\|g\|_{L^\infty}\Bigl\|\Bigl(\sum_{j\in\Z}\bigl(2^{j}{\rm M}(\D_{j}X)(x)\bigr)^2\Bigr)^{\f12}\Bigr\|_{L^p}\\
\leq& C\|g\|_{L^\infty}\Bigl\|\Bigl(\sum_{j\in\Z}\bigl(2^{j}\D_{j}X(x)\bigr)^2\Bigr)^{\f12}\Bigr\|_{L^p}\leq C\|g\|_{L^\infty}\|\na X\|_{L^p}.
\end{split}
\eeno
Similarly, due to $\dive X=0$ and considering the support properties to the Fourier transform of the terms in $R(X^k,\p_kg),$ for each $x\in\R^2,$ we have
\beno
\begin{split}
|\D_j R(X^k,\p_kg)(x)|=&\bigl|\p_k\D_j\bigl(\sum_{j'\geq j-5}\D_{j'}X^k\wt{\D}_{j'}g\bigr)(x)\bigr|\\
\leq &C2^j{\rm M}\Bigl(\sum_{j'\geq j-5}\D_{j'}X\wt{\D}_{j'}g\Bigr)(x)\\
\leq &C\|g\|_{L^\infty}\Bigl(\sum_{j'\geq j-5}2^{j-j'}{\rm M}(2^{j'}\D_{j'}X)(x)\Bigr),
\end{split}
\eeno where $\wt{\D}_{j}g\eqdefa \sum_{|j'-j|\leq 1}\D_{j'}g.$
Applying Young inequality gives rise to
\beno
\begin{split}
\bigl\|\s(D)R(X^k,\p_kg)\|_{\dot W^{1,p}}\leq &\|R(X^k,\p_kg)\|_{L^{p}}\\
\leq &C\|g\|_{L^\infty}\Bigl(\sum_{\ell\geq -5}2^{-\ell}\Bigr)\Bigl\|\bigl(\sum_{j\in\Z}\bigl({\rm M}(2^{j}\D_{j}X)(x)\bigr)^2\bigr)^{\f12}
\Bigr\|_{L^p}\\
\leq &C\|g\|_{L^\infty}\|\na X\|_{L^p}.
\end{split}
\eeno
Exactly along the same line, for each $x\in\R^2,$ there holds
\beno
\begin{split}
\bigl|\D_jR(X^k,\s(D)\p_kg)(x)\bigr|\leq &C\|g\|_{L^\infty}\sum_{j'\geq j-5}{\rm M}(\D_{j'}X)(x)\\
\leq &C\|g\|_{L^\infty}2^{-j}\sum_{j'\geq j-5}2^{j-j'}{\rm M}(2^{j'}\D_{j'}X)(x),
\end{split}
\eeno
from which, we infer
\beno
\|R(X^k,\s(D)\p_kg)\|_{\dot W^{1,p}}\leq C\|g\|_{L^\infty}\|\na X\|_{L^p}.
\eeno
Next, we consider the estimate of the commutator, $[T_{X^k}; \s(D)\p_k]g.$
Let us denote $\th(\xi)\eqdefa-\xi\s(\xi)\varphi(\xi).$  It is easy to observe that
\beq\label{S9eq3}
[T_{X^k}; \s(D)\p_k]g=\sum_{j\in\Z} \bigl(S_{j-1}X^k\D_j\s(D)\p_kg-\s(D)\p_k(S_{j-1}X^k\D_jg)\bigr).
\eeq
Whereas by applying Abel rearrangement techniques, we get
\beno
\begin{split}
S_{j-1}&X^k\D_j\s(D)\p_kg-\s(D)\p_k(S_{j-1}X^k\D_jg)\\
=&\sum_{|j-\ell|\leq 4}\bigl(S_{j-1}X^k\D_j\s(D)\p_k\D_\ell g-\s(D)\p_k\D_\ell(S_{j-1}X^k\D_jg)\bigr)\\
=&\sum_{|j-\ell|\leq 4}\Bigl(S_{j-1}X\cdot\bigl(S_j\theta(2^{-\ell}D)g-S_{j-1}\theta(2^{-\ell}D)g\bigr)\\
&\qquad\qquad-\theta(2^{-\ell}D)\cdot\bigl(S_{j-1}X(S_jg-S_{j-1}g)\bigr)\Bigr)\\
=&-\sum_{|j-\ell|\leq 4}\bigl(\D_jX\cdot S_j\th(2^{-\ell} D)g-\th(2^{-\ell} D)\cdot(\D_j XS_jg)\bigr),
\end{split}
\eeno
which implies for each $x\in\R^2,$
\beq\label{S9eq3qe}
\begin{split}
\bigl(&S_{j-1}X^k\D_j\s(D)\p_kg-\s(D)\p_k(S_{j-1}X^k\D_jg)\bigr)(x)\\
&=\sum_{|j-\ell|\leq 4}2^{2\ell}\int_{\R^2}\check{\th}(2^{\ell}z)\bigl(\D_jX(x-z)-\D_jX(x)\bigr)S_jg(x-z)\,dz\\
&=\sum_{|j-\ell|\leq 4}2^{2\ell}\int_{\R^2}\check{\th}(2^{\ell}z)\int_0^1\D_j\na X(x-\tau z)\cdot
z\,d\tau S_jg(x-z)\,dz.
\end{split}
\eeq
Then for $\Psi(z)=\check{\th}(z)|z|,$ we deduce from \eqref{S9eq0} that
\beno
\begin{split}
\bigl|\bigl(&S_{j-1}X^k\D_j\s(D)\p_kg-\s(D)\p_k(S_{j-1}X^k\D_jg)\bigr)(x)\bigr|\\
&\leq C\|g\|_{L^\infty}\sum_{|j-\ell|\leq 4}2^\ell\int_0^1\int_{\R^2}|\Psi(2^\ell z)||\D_j\na X(x-\tau z)|\,dz\,d\tau\\
&=C2^{-j}\|g\|_{L^\infty}\sum_{|j-\ell|\leq 4}\int_0^1\left(\frac{2^\ell}\tau\right)^2\int_{\R^2}|\Psi(2^\ell z/\tau)||\D_j\na X(x- z)|\,dz\,d\tau\\
&\leq C2^{-j}\|g\|_{L^\infty}{\rm M}(\D_j\na X)(x).
\end{split}
\eeno
Considering the support properties to terms in \eqref{S9eq3},
we conclude that
\beno
\begin{split}
\|[T_{X^k}; \s(D)\p_k]g\|_{\dot W^{1,p}}\leq &C\|g\|_{L^\infty}\bigl\|\bigl(\sum_{j\in \Z}\bigl({\rm M}(\D_j\na X)(x)\bigr)^2\bigr)^{\f12}\bigr\|_{L^p}\\
\leq &C\|g\|_{L^\infty}\|\na X\|_{L^p}.
\end{split}
\eeno
By summing up the estimates, we achieve
\beno
\bigl\|\p_X\s(D)g-T_{\s(D)\p_kg}X^k\bigr\|_{\dot W^{1,p}}\leq C\bigl(\|\p_Xg\|_{L^p}+\|\na X\|_{L^p}\|g\|_{L^\infty}\bigr),
\eeno
from which  and the 2-D interpolation inequality
\beno
\|a\|_{L^\infty}\leq C\|a\|_{L^p}^{1-\f2p}\|\na a\|_{L^p}^{\f2p},
\eeno  we deduce that
\beq\label{S9eq1a}
\begin{split}
\bigl\|\p_X\s(D)g-T_{\s(D)\p_kg}X^k\bigr\|_{L^\infty}
\leq C\bigl(\|X\|_{L^\infty}\|g\|_{L^p}\bigr)^{1-\f2p}\bigl(\|\na X\|_{L^p}\|g\|_{L^\infty}+\|\p_{X}g\|_{L^p}\bigr)^{\f2p}.
\end{split}
\eeq
Here we used the fact that
\beq \label{S9eq1b}
\|\D_j(T_{\s(D)\p_kg}X^k)\|_{L^p}\leq C\|g\|_{L^p}\|X\|_{L^\infty}.
\eeq
Indeed it is easy to observe that
\beno
\begin{split}
\bigl\|\D_jT_{\s(D)\p_kg}X^k)\bigr\|_{L^p}\leq &\sum_{|j'-j|\leq 4}\|S_{j'-1}(\s(D)\p_kg)\|_{L^p}\|\D_{j'}X^k\|_{L^\infty} \andf\\
\|S_{j'-1}(\s(D)\p_kg)\|_{L^p}\leq &  C\|g\|_{L^p},
\end{split}
\eeno
which implies \eqref{S9eq1b}.

While note that $\mbox{supp}\s\subset \{ \xi\in\R^2:\ |\xi|\geq \f12\ \},$  we have
\beno
\|\D_j\s(D)\p_kg\|_{L^\infty}\leq C\|g\|_{L^\infty}\Rightarrow  \|S_{j'-1}(\s(D)\p_kg)\|_{L^\infty}\leq  Cj\|g\|_{L^\infty},
\eeno
so that we find
\beq\label{S9eq1c}
\begin{split}
\bigl\|\D_j(T_{\s(D)\p_kg}X^k)\bigr\|_{L^p}\leq &\sum_{|j'-j|\leq 4}\|S_{j'-1}(\s(D)\p_kg)\|_{L^\infty}\|\D_{j'}X^k\|_{L^p}\\
\leq & Cj2^{-j}\|g\|_{L^\infty}\|\na X\|_{L^p}.
\end{split}
\eeq
Then for any integer $N,$ we deduce from Bernstein Lemma and \eqref{S9eq1b}, \eqref{S9eq1c} that
\beno
\begin{split}
\|T_{\s(D)\p_kg}X^k\|_{L^\infty}
\lesssim &\sum_{j\leq N}2^{\f{2j}p}\|X\|_{L^\infty}\|g\|_{L^p}+\sum_{j>N }j2^{-j\left(1-\f{2}p\right)}\|\na X\|_{L^p}\|g\|_{L^\infty}\\
\lesssim &2^{\f{2N}p}\|X\|_{L^\infty}\|g\|_{L^p}+N2^{-N\left(1-\f{2}p\right)}\|\na X\|_{L^p}\|g\|_{L^\infty}.
\end{split}
\eeno
Taking $N$ so that $2^N\sim \f{\|g\|_{L^\infty}\|\na X\|_{L^p}}{\|g\|_{L^p}\|X\|_{L^\infty}}$ in the above inequality gives rise to
\beq \label{S9eq1d}
\|T_{\s(D)\p_kg}X^k\|_{L^\infty}\leq C\bigl(\|X\|_{L^\infty}\|g\|_{L^p}\bigr)^{1-\f2p}\bigl(\|\na X\|_{L^p}\|g\|_{L^\infty}\bigr)^{\f2p}
\ln\Bigl(e+\f{\|\na X\|_{L^p}\|g\|_{L^\infty}}{\|X\|_{L^\infty}\|g\|_{L^p}}\Bigr).
\eeq Together with \eqref{S9eq1a}, we conclude the proof of \eqref{S9eq1}, and thus  Lemma \ref{S9lem1}.
\end{proof}

\begin{rmk}\label{S9rmk1}
Note for any $s\in ]2/p,1[$ $ N2^{-N\left(1-\f{2}p\right)}\leq C_s2^{-N\left(s-\f{2}p\right)},$ so that we deduce from the proof of \eqref{S9eq1d} that
\beq \label{S9eq1e}
\|T_{\s(D)\p_kg}X^k\|_{L^\infty}\leq C_s\bigl(\|X\|_{L^\infty}\|g\|_{L^p}\bigr)^{1-\f2{ps}}\bigl(\|\na X\|_{L^p}\|g\|_{L^\infty}\bigr)^{\f2{ps}}.
\eeq
\end{rmk}

\begin{proof}[Proof of Proposition \ref{S9prop1}] The proof of this proposition is motivated by that of Theorem 3.3.1 of \cite{Ch1995}.
Indeed for any vector field $X(x)=(X^1(x),X^2(x)),$ there hold
\beno
\begin{split}
&|X(x)|\p_1^2=\f{X^1(x)X(x,D)\p_1-X^2(x)X(x,D)\p_2+(X^2(x))^2\D}{|X(x)|},\\
&|X(x)|\p_2^2=\f{X^2(x)X(x,D)\p_2-X^1(x)X(x,D)\p_1+(X^1(x))^2\D}{|X(x)|},\\
&|X(x)|\p_1\p_2=\f{X^1(x)X(x,D)\p_2+X^1(x)X(x,D)\p_2-X^1(x)X^2(x)\D}{|X(x)|}.
\end{split}
\eeno
Since $|X^i(x)|\leq |X(x)|,$ we infer that for each $x\in\R^2,$ there holds
\beq\label{S9eq5}
|X(x)|\bigl|\na^2(-\D)^{-1}g(x)\bigr|\leq C\bigl(|X(x)|\|g\|_{L^\infty}+\bigl\|X(x,D)\na(-\D)^{-1}g\bigr\|_{L^\infty}\bigr).
\eeq
Let us denote
\beno
U_\la\eqdefa \bigl\{ x\in\R^2:\ |X_\la(x)|\geq I(X)\ \bigr\}.
\eeno
Then we deduce from \eqref{S9eq5} that
\beno
\bigl\|\na^2(-\D)^{-1}g\bigr\|_{L^\infty(U_\la)}\leq C\bigl(\|g\|_{L^\infty}+\f1{I(X)}\bigl\|\p_X\na(-\D)^{-1}g\bigr\|_{L^\infty}\bigr).
\eeno
from which and Definition \ref{S1def1}, we deduce that
\beq\label{S9eq9a}
\|\na^2 \Delta^{-1} g\|_{L^\infty}\leq C\Bigl(\|g\|_{L^\infty}+\f1{I(X)}\sup_{\la\in\Lambda}\|\p_{X_\lambda}\na(-\D)^{-1} g\|_{L^\infty}\Bigr).
\eeq
Let us now split $\na^2 \Delta^{-1}g$ as
\beno
\na^2 \Delta^{-1} g=\na^2 \Delta^{-1}S_0 g+\na^2 \Delta^{-1}(1-S_0) g.
\eeno
It is easy to observe that
\beq
\label{S9eq9b}
\begin{split}
\|\na^2 \Delta^{-1}S_0 g\|_{L^\infty}\leq & C\bigl(\|\na^2 \Delta^{-1}S_0 g\|_{L^p}+\|\na^3 \Delta^{-1}S_0 g\|_{L^p}\bigr)\leq C\|g\|_{L^p}.
\end{split}
\eeq
Whereas it follows from \eqref{S9eq9a} that
\beno
\begin{split}
\|\na^2 \Delta^{-1}(1-S_0) g\|_{L^\infty}\leq &C\Bigl(\|(1-S_0) g\|_{L^\infty}+\f{1}{I(X)}\sup_{\la\in\Lambda}\bigl\|\p_{X_\la}\na \Delta^{-1}(1-S_0) g\bigr\|_{L^\infty}\Bigr)
\end{split}
\eeno
from which,  \eqref{S9eq1} and \eqref{S9eq1e},  for any $s\in ]2/p,1[,$ we infer that
\beno
\begin{split}
\|\na^2 \Delta^{-1}(1-S_0) g&\|_{L^\infty}\leq  C_s\Bigl\{\|g\|_{L^\infty}+\f1{I(X)}\sup_{\la\in\La}\Bigl(\bigl(\|\na X_\la\|_{L^p}\|g\|_{L^\infty}+\|\p_{X_\la}g\|_{L^p}\bigr)^{\f2p}
\\
&\times\bigl(\|X_\la\|_{L^\infty}\|g\|_{L^p}\bigr)^{1-\f2p}+\bigl(\|\na X_\la\|_{L^p}\|g\|_{L^\infty}\bigr)^{\f2{ps}}\bigl(\|X_\la\|_{L^\infty}\|g\|_{L^p}\bigr)^{1-\f2{ps}}\Bigr)\Bigr\}.
\end{split}
\eeno
Along with \eqref{S9eq9b}, we complete the proof of \eqref{S9eq4}.
\end{proof}

As an application of Proposition \ref{S9prop1}, let us now present the proof of Corollary \ref{S9col1}.

\begin{proof}[Proof of Corollary \ref{S9col1}] Let $X$ be a smooth enough solution of \eqref{Xtrans}, we first deduce
from the transport equation of \eqref{S3eq6a} that for any $\ell\in \N,$
\beno
\p_t\p_X^\ell\mu(\r)+u\cdot\na\p_X^\ell\mu(\r)=0,
\eeno
which \eqref{S9eq11af}  implies
\beq\label{S9eq20}
\|\p_X^\ell\mu(\r(t))\|_{L^p}=\|\p_{X_0}^\ell\mu(\r_0)\|_{L^p}\ \ \forall p\in [1,\infty] \andf \sup_{\la\in\La}\|\p_{X_\la(t)}\mu(\r(t))\|_{L^\infty}\leq C_{X,\mu}.
\eeq
In what follows, let us take any $X(t)\eqdefa \left(X_\la(t)\right)_{\la\in\La}$ where $X_\la(t)$ solves \eqref{Xtrans} with initial data $X_\la$ for any fixed $\la\in\La.$
We denote $\hbar\eqdefa -\na(-\D)^{-1}\mbox{Tr}(\na X\na u).$ Then in view of \eqref{S1eq1ag},we write
\beno
\na(u_X-\hbar)  =
\na(-\D)^{-1}\PP\dive\bigl((\mu(\rho)-1)\cM(u_X-\hbar)\bigr)-\na(-\D)^{-1}\PP\dive\bigl(\mu(\rho)\cM(u_X-\hbar)\bigr),
\eeno
from which and\eqref{XLINS}, we infer
\beq \label{S4eq3p}
\begin{split}
\na u_X=&\na \hbar+\na(-\D)^{-1}\PP\dive\bigl(\mu(\rho)\cM(\hbar)\bigr)-\na(-\D)^{-1}\PP\bigl(\r D_tu_X-G)\\
&\qquad\qquad\qquad\qquad\qquad+\na(-\D)^{-1}\PP\dive\bigl((\mu(\rho)-1)\cM(u_X-\hbar)\bigr).
\end{split}
\eeq
Yet it follows from \eqref{S3eq7} that
\beq \label{S4eq365}
\begin{split}
\r D_tu_X^j-G^j=&\p_X(\r D_tu^j)-\p_jX\cdot\na\Pi-\p_i\left(2\p_X\mu(\r)\cM_{ij}(u)\right)\\
&+\p_i\left(\mu(\r)\bigl(\p_iX\cdot\na u^j+\p_jX\cdot\na u^i\bigr)\right)
+\p_iX\cdot\na\left( 2\mu(\r)\cM_{ij}(u)\right),
\end{split}
\eeq
we then deduce from \eqref{S4eq3p} that
\beno
\begin{split}
\|\na u_X\|_{L^{p}}\leq & C\Bigl(\|\na \hbar\|_{L^{p}}+\|\na u\|_{L^{p}}+\|\mu(\r)-1\|_{L^\infty}\|\na u_X\|_{L^{p}}\\
&+\|\na u\|_{L^{\infty}}\|\na X\|_{L^p}+\bigl\|\na(-\D)^{-1}\PP\bigl(\p_X(\r D_tu)-\na X^k\p_k\Pi\bigr)\bigr\|_{L^{p}}\Bigr).
\end{split}
\eeno
Then under the condition \eqref{S1eq2},  we deduce from \eqref{ener4a} that
\beq\label{S9eq12}
\begin{split}
\|\na u_X\|_{L^p}\leq & C\Bigl(\|\na \hbar\|_{L^p}+\|\na u\|_{L^p}+\|\na u\|_{L^{\infty}}\|\na X\|_{L^{p}}\\
&\quad +\bigl\|\na(-\D)^{-1}\PP\bigl(\p_X(\r D_tu)-\na X^k\p_k\Pi\bigr)\bigr\|_{L^{p}}\Bigr).
\end{split}
\eeq
Due to $\dive X=0,$ we find
\beno
\begin{split}
\|\na(-\D)^{-1}{\Bbb P}\p_X(\r D_tu)\|_{L^{p}}= &\|\na(-\D)^{-1}{\Bbb P}\p_k(X^k\r D_tu)\|_{L^{p}} \\
\leq &C\|X\|_{L^\infty}\|D_tu\|_{L^p}
\leq C\|X\|_{L^\infty}\|D_tu\|_{L^2}^{\f2p}\|\na D_tu\|_{L^2}^{1-\f2p}.
\end{split}
\eeno
Whereas according to \eqref{b.4a}, we infer
\beno
\begin{split}
\bigl\|\na(-\D)^{-1}\PP(\na X^k\p_k\Pi_1)\bigr\|_{L^{p}}\leq &C\|\na X\|_{L^p}\|\na \Pi_1\|_{L^2}\\
\leq &C\|\na X\|_{L^p}\| D_tu\|_{L^2},
\end{split}
\eeno
and
\beno
\begin{split}
\bigl\|\na(-\D)^{-1}\PP(\na &X^k\p_k\Pi_2)\bigr\|_{L^{p}}\leq C\|\na X\|_{L^p}\|\Pi_2\|_{L^\infty}.
\end{split}
\eeno
For any $s\in ]2/p,1[,$ by substituting the above estimates into \eqref{S9eq12} and using \eqref{S0eq5}, we obtain
\beno
\begin{split}
\|\na u_{X}(t)\|_{L^p}\leq & C_s\Bigl\{\|\na u\|_{L^{p}}+\|D_tu\|_{L^2}^{\f2p}\|\na D_tu\|_{L^2}^{1-\f2p}\|X\|_{L^\infty}+\f{\sup_{\la\in\La}\|X_\la(t)\|_{L^\infty}^{1-\f2p}}{I(X(t))}\|\na u\|_{L^p}\\
&+\bigl(\|\na u\|_{L^p}+\|\na u\|_{L^\infty}+\|D_tu\|_{L^2}\bigr)\|\na X\|_{L^p}\\
&+\|\mu(\r_0)-1\|_{L^\infty}\f{\|\na X\|_{L^p}}{I(X(t))}\sup_{\la\in\La}\Bigl(\bigl(\|\na X_\la\|_{L^p}\|\na u\|_{L^\infty}
+\|\na u_{X_\la}\|_{L^p}\bigr)^{\f2p}
\\
&\times\bigl(\|X_\la\|_{L^\infty}\|\na u\|_{L^p}\bigr)^{1-\f2p}+\bigl(
\|\na X_\la\|_{L^p}\|\na u\|_{L^\infty}
\bigr)^{\f2{ps}}\bigl(\|X_\la\|_{L^\infty}\|\na u\|_{L^p}\bigr)^{1-\f2{ps}}\Bigr)\Bigr\}.
\end{split}
\eeno
Then under the assumption of \eqref{S9eq11as}, by taking $X=X_\la$ in the above inequality and take supremum of the resulting inequality for $\la\in\La,$
we achieve
\beno
\begin{split}
\sup_{\la\in\La}\|\na u_{X_\la(t)}(t)\|_{L^p}\leq  C\Bigl(&\bigl[\|\na u\|_{L^p}+\|D_tu\|_{L^2}^{\f2p}\|\na D_tu\|_{L^2}^{1-\f2p}\bigr]\sup_{\la\in\La}\|X_\la(t)\|_{L^\infty}\\
&+\bigl(\|\na u\|_{L^p}+\|\na u\|_{L^\infty}+\|D_tu\|_{L^2}\bigr)\sup_{\la\in\La}\|\na X_\la(t)\|_{L^p}\\
&+\Bigl[1+\f{\sup_{\la\in\La}\|X_\la(t)\|_{L^\infty}^{1-\f2p}}{I(X(t))}\Bigr]\|\na u\|_{L^p} \Bigr)+
\f12\sup_{\la\in\La}\|\na u_{X_\la(t)}(t)\|_{L^p},
\end{split}
\eeno
which implies \eqref{S9eq11}.
This completes the proof of the Corollary \ref{S9col1}.
\end{proof}

\setcounter{equation}{0}
\section{The $L^1_t({\rm Lip})$ estimate of the velocity field}\label{Sect6}

Let us first present the proof of Proposition \ref{S6prop1}.

\begin{proof}[Proof of Proposition \ref{S6prop1}] Due to  $\dive u=0,$ for any $r\in [1,\infty[,$ we get, by taking $L^2$ inner product of \eqref{Xtrans} with $|X|^{r-2}X,$ that
 \beno
 \begin{split}
 \bigl|\f{d}{dt}\|X(t)\|_{L^r}^r\bigr|= &r\bigl|\bigl(X\cdot\na u | |X|^{r-2}X\bigr)\bigr|\\
 \leq &r\|\na u\|_{L^\infty}\|X\|_{L^r}^r,
 \end{split}
 \eeno
 which implies the first inequality of \eqref{S6eq1} $r\in [1,\infty[$. The case for $r=\infty$ can be proved similarly.

 Since $\left(X_\la(0)\right)_{\la\in\La}$ is a non-degenerate family of vector fields, we deduce from \eqref{S6eq1} that
 $\left(X_\la(t)\right)_{\la\in\La}$ is also a non-degenerate family of vector fields. Moreover, there holds
 \beq \label{S6eq0}
 I(X(0))\exp\bigl(-\|\na u\|_{L^1_t(L^\infty)}\bigr)\leq I(X(t)) \leq I(X(0))\exp\bigl(\|\na u\|_{L^1_t(L^\infty)}\bigr).
 \eeq

Let us now turn to the proof of \eqref{S6eq2}.
We  first take  $\p_i$ to the System \eqref{Xtrans} to get
$$\pa_t \pa_i X^j+u\cdot\nabla \pa_i X^j=-\pa_i u\cdot\nabla X^j+\pa_i  u^j_X.$$
By multiplying the above equation by $(\p_iX^j)^{p-1}$ and integrating the resulting equality over $\R^2,$ we obtain
$$\frac{d}{dt}\|\nabla X(t)\|_{L^p}\leq \|\nabla u\|_{L^\infty}\|\nabla X\|_{L^p}+\|\nabla u_X \|_{L^p}.$$
Thanks to Corollary \ref{S9col1}, by taking $X(t)=X_\la(t)$ in the above inequality and integrating the inequality over $[0,t],$ and then inserting \eqref{S9eq11} to the resulting inequality,
we achieve
\beq \label{S6eq3}
\begin{split}
\sup_{\la\in\La}\|\na X_\la(t)\|_{L^p}\leq & \sup_{\la\in\La}
\|\na X_\la(0)\|_{L^p}+C\int_0^t\|\na u\|_{L^p}\Bigl(1+\f{\sup_{\la\in\La}\|X_\la(t')\|_{L^\infty}^{1-\f2p}}{I(X(t'))}\Bigr)\,dt'\\
&+ C\int_0^t
\bigl(\|\na u\|_{L^p}+\|D_t u\|_{L^2}^{\f2p}\|\na D_t u\|_{L^2}^{1-\f2p}\bigr)
\sup_{\la\in\La}\|X_\la(t')\|_{L^\infty}\,dt'\\
&+C\int_0^t\bigl(\|\na u\|_{L^p}
+\|\na u\|_{L^\infty}+\|D_tu\|_{L^2}\bigr)\sup_{\la\in\La}\|\na X_\la(t')\|_{L^p} \,dt'.
\end{split}
\eeq
It is easy to observe from \eqref{S2eq1} and Propositions \ref{S2prop2} and \ref{S3prop1} that for any $r\in [2,\infty[$
\beq \label{S4eq15}
\|\na u(t)\|_{L^r}\leq C\|\na u(t)\|_{L^2}^{\f2r}\|D_t u(t)\|_{L^2}^{1-\f2r}\leq C_1t^{\f1r-\f12}\w{t}^{-\left(\f12+\d\right)_-}.
\eeq
By virtue of \eqref{S4eq15}, as long as $\d>\f1p,$ we have
\beno
\int_0^t\|\na u(t')\|_{L^p}\,dt'\leq C_0\int_0^t(t')^{\f1p-\f12}\w{t'}^{-\left(\f12+\d\right)_-}\,dt'\leq C_0.
\eeno
Whereas it follows from Proposition \ref{S2prop2} that
\beq\label{S4eq15re}
\begin{split}
\int_0^t\|D_tu(t')\|_{L^2}\,dt'= &\int_0^t\w{t'}^{-\left(\f12+\d\right)_-}\bigl\|\w{t'}^{\left(\f12+\d\right)_-}D_tu(t')\|_{L^2}\,dt'\\
\leq &C\|\w{t'}^{\left(\f12+\d\right)_-}D_tu\|_{L^2_t(L^2)}\leq C_0,
\end{split}
\eeq
and
\beno
\begin{split}
\int_0^t&\|D_t u(t')\|_{L^2}^{\f2p}\|\na D_t u(t')\|_{L^2}^{1-\f2p}\,dt'\\
=&\int_0^t(t')^{-\f12\left(1-\f2p\right)}\w{t'}^{-\left(\f12+\d\right)_-}
\bigl\|\w{t'}^{\left(\f12+\d\right)_-}D_t u(t')\bigr\|_{L^2}^{\f2p}\bigl\|\sqrt{t'}\w{t'}^{\left(\f12+\d\right)_-}\na D_t u(t')\bigr\|_{L^2}^{1-\f2p}\,dt'\\
\leq &C\bigl\|\w{t'}^{\left(\f12+\d\right)_-}D_t u\bigr\|_{L^2_t(L^2)}^{\f2p}\bigl\|\sqrt{t'}\w{t'}^{\left(\f12+\d\right)_-}\na D_t u\bigr\|_{L^2_t(L^2)}^{1-\f2p}\leq C_0.
\end{split}
\eeno
Hence by applying Gronwall's inequality  to \eqref{S6eq3}, we arrive at
\beno
\begin{split}
\sup_{\la\in\La}\|\na X_\la(t)\|_{L^p}\leq C_0\Bigl(1+\sup_{\la\in\La}\|\na X_\la(0)\|_{L^p}&+\sup_{\la\in\La}\|X_\la\|_{L^\infty_t(L^\infty)}\\
&+\sup_{\tau\in [0,t]}\f{\sup_{\la\in\La}\|X_\la(\tau)\|_{L^\infty}^{1-\f2p}}{I(X(\tau))}\Bigr){\rm V}(t).
\end{split}
\eeno
Inserting the Inequality \eqref{S6eq0} to the above inequality leads to \eqref{S6eq2}.
This completes the proof of the proposition.
\end{proof}

\begin{proof}[Proof of Proposition \ref{S9lem3}]  Let  $S_0$ be the partial sum operator introduced in Definition \ref{S9def1}. We denote
\beq\label{S6eq8a}
\frak{A}\eqdefa
(-\D)^{-1}(I-S_0)\dive\bigl((\mu(\rho)-1)\cM(u)\bigr) \andf \frak{B}(\r)\eqdefa \|\mu(\r)-1\|_{L^\infty}.\eeq
Due to $\dive u=0,$ according to the momentum equation of \eqref{INS}, we write
\beno
 -\Delta u = -\PP(\rho D_tu)
+\PP\dive\bigl((\mu(\rho)-1)\cM(u)\bigr),\eeno
 which implies
\beq\label{S6eq7} \begin{split} (I-S_0)\na u =&- \na
(-\D)^{-1}\PP(I-S_0)(\rho D_tu)  +\na\PP\frak{A}. \end{split}\eeq

By virtue of the the 2-D interpolation inequality that
\beno
\|f\|_{L^\infty}\leq C\|f\|_{L^4}^{\f12}\|\na f\|_{L^4}^{\f12}\leq C\|f\|_{H^1}^{\f12}\|\na f\|_{L^4}^{\f12},
\eeno
and that
\beno
\bigl\|\na
(-\D)^{-1}\PP(I-S_0)(\rho D_tu)\bigr\|_{H^1}\leq C\|\r D_tu\|_{L^2},\eeno
we infer
\beq \label{S6eq8}
\begin{split}
\bigl\|\na
(-\D)^{-1}\PP(I-S_0)(\rho D_tu)\bigr\|_{L^\infty}\leq &C\|\r D_tu\|_{L^2}^{\f12}\|\r D_tu
\|_{L^4}^{\f12}\\
\leq &C\|D_tu\|_{L^2}^{\f34}\|\na D_tu
\|_{L^2}^{\f14}.
\end{split}
\eeq

On the other hand, let $X_\la(t)$ be the corresponding solution of \eqref{Xtrans} with initial data $X_\la(0).$
Thanks to \eqref{S6eq0},  $\left(X_\la(t)\right)_{\la\in\La}$ is a non-degenerate family of vector fields.
 Recall that $\PP=Id+\na(-\D)^{-1}\dive,$  for any $s\in ]2/p,1[,$ we
get, by applying Proposition \ref{S9prop1}, that
\beq\label{S6eq9}
\begin{split}
\bigl\|\na\PP\frak{A}\bigr\|_{L^\infty}
\lesssim &\frak{B}(\r)\|\na u\|_{L^p}+ \|\na\frak{A}\|_{L^\infty} +\f1{I(X)}\sup_{\la\in\Lambda}
\Bigl\{\bigl(\frak{B}(\r)\|X_\la\|_{L^\infty}\|\na u\|_{L^p}\bigr)^{1-\f2p}\\
&\qquad\qquad\times \bigl(\|\na X_\la\|_{L^p}\|\na\frak{A}\|_{L^\infty}+\bigl\|\p_{X_\la}\na\frak{A}\bigr\|_{L^p}\bigr)^{\f2p}\\
&\qquad\quad+\bigl(\frak{B}(\r)\|X_\la\|_{L^\infty}\|\na u\|_{L^p}\bigr)^{1-\f2{ps}} \Bigl(\|\na X_\la\|_{L^p}\|\na\frak{A}\|_{L^\infty}\Bigr)^{\f2{ps}}\Bigr\}.
\end{split}
\eeq
It follows from  Proposition \ref{Saprop1}  that
$$\bigl\|\p_{X_\la}\na\frak{A}\bigr\|_{L^{p}}
\lesssim \frak{B}(\r)\bigl(\|\na X_\la\|_{L^p}\|\na u\|_{L^\infty}+\|\na u_{X_\la}\|_{L^p}\bigr)+\|\p_{X_\la}\mu(\r)\|_{L^\infty}\|\na u\|_{L^p},
 $$
so that in view of  \eqref{S9eq20}, we get, by applying Young's inequality, that
\beno
\begin{split}
&\f1{I(X)}\bigl(\frak{B}(\r)\|X_\la\|_{L^\infty}\|\na u\|_{L^p}\bigr)^{1-\f2p}\bigl\|\p_{X_\la}\na\frak{A}\bigr\|_{L^{p}}^{\f2p}\\
&\lesssim \frak{B}(\r)\Bigl(I(X)^{-\f{p}{p-2}}\|\na u\|_{L^p}\|X_\la\|_{L^\infty}+\|\na u\|_{L^\infty}\|\na X_\la\|_{L^p}+\|\na u_{X_\la}\|_{L^p}\Bigr)
+\|\na u\|_{L^p}.
\end{split}
\eeno
While we deduce from Proposition \ref{S9prop1} that for any $s\in ]2/p,1[,$
\beno
\begin{split}
\|\na\frak{A}\|_{L^\infty}\lesssim &
\frak{B}(\r)\bigl(\|\na u\|_{L^p}+\|\na u\|_{L^\infty}\bigr)+\f1{I(X)}\sup_{\la\in\Lambda}\Bigl\{\bigl(\frak{B}(\r)\|X_\la\|_{L^\infty}\|\na u\|_{L^p}\bigr)^{1-\f2p}\\
&\qquad\times \Bigl(\frak{B}(\r)\bigl(\|\na u\|_{L^\infty}\|\na X_\la\|_{L^p}+\|\na u_{X_\la}\|_{L^p}\bigr)+\|\p_{X_\la}\mu(\r)\|_{L^\infty}\|\na u\|_{L^p}\Bigr)^{\f2p}\\
&\qquad+\frak{B}(\r)\bigl(\|X_\la\|_{L^\infty}\|\na u\|_{L^p}\bigr)^{1-\f2{ps}}\bigl(\|\na u\|_{L^\infty}\|\na X_\la\|_{L^p}\bigr)^{\f2{ps}}\Bigr\}.
\end{split}
\eeno
Applying Young's inequality yields
\beno
\begin{split}
\|\na\frak{A}\|_{L^\infty}\lesssim & \|\na u\|_{L^p}+\frak{B}(\r)\Bigl(\|\na u\|_{L^\infty}\\
&+\sup_{\la\in\La}\bigl(C(s,p,X)\|\na u\|_{L^p}\|X_\la\|_{L^\infty}+\|\na u\|_{L^\infty}\|\na X_\la\|_{L^p}+\|\na u_{X_\la}\|_{L^p}\bigr)\Bigr).
\end{split}
\eeno
 Here and in what follows, we always denote
\beq
\label{S6eq9a}
C(s,p,X)\eqdefa I(X)^{-\f{p}{p-2}}+I(X)^{-\f{ps}{ps-2}}. \eeq
The same estimate holds for
\beno
\begin{split}
\f1{I(X)}\sup_{\la\in\Lambda}&\bigl(\frak{B}(\r)\|X_\la\|_{L^\infty}\|\na u\|_{L^p}\bigr)^{1-\f2p}
\bigl(\|\na X_\la\|_{L^p}\|\na\frak{A}\|_{L^\infty}\bigr)^{\f2p}.
\end{split}
\eeno
Inserting the above inequalities into \eqref{S6eq9} gives rise to
\beq\label{S6eq10}
\begin{split}
\|\na\PP\frak{A}\|_{L^\infty}\leq  C\Bigl\{\|\na u\|_{L^p}+\frak{B}(\r)\Bigl(&\|\na u\|_{L^\infty}+\sup_{\la\in\La}\bigl(C(s,p,X)\|\na u\|_{L^p}\|X_\la\|_{L^\infty}\\
&\qquad+\|\na u\|_{L^\infty}\|\na X_\la\|_{L^p}+\|\na u_{X_\la}\|_{L^p}\bigr)\Bigr)\Bigr\}.
\end{split}
\eeq
By virtue of \eqref{S6eq7}, \eqref{S6eq8} and \eqref{S6eq10}, we achieve
\beno
\begin{split}
\|\na u\|_{L^\infty}\leq & \|S_0\na u\|_{L^\infty}+\|(I-S_0)\na u\|_{L^\infty}\\
\leq &C\Bigl\{\|\na u\|_{L^p}+\|D_tu\|_{L^2}^{\f34}\|\na D_tu\|_{L^2}^{\f14}+\frak{B}(\r)\Bigl(\|\na u\|_{L^\infty}\\
&+\sup_{\la\in\La}\bigl(C(s,p,X)\|\na u\|_{L^p}\|X_\la\|_{L^\infty}+\|\na u\|_{L^\infty}\|\na X_\la\|_{L^p}+\|\na u_{X_\la}\|_{L^p}\bigr)\Bigr)\Bigr\}.
\end{split}
\eeno
 Whereas for some $\frak{M},$ which we shall fix later on, we define $T^\star$ by \eqref{S9eq11as}. Then  under the smallness condition \eqref{S1eq2}, for any  $t\in [0,T^\star],$ we get, by inserting the estimate
\eqref{S9eq11} to the above inequality, that
\beq\label{S6eq11}
\begin{split}
\|\na & u(t)\|_{L^\infty}
\leq C\Bigl\{\|\na u\|_{L^p}+\|D_tu\|_{L^2}^{\f34}\|\na D_tu\|_{L^2}^{\f14}\\
&+\frak{B}(\r)\Bigl(\bigl(\bigl(1+C(s,p,X)\bigr)\|\na u\|_{L^p}+\|D_tu\|_{L^2}^{\f2p}\|\na D_tu\|_{L^2}^{1-\f2p}\bigr)\sup_{\la\in\La}\|X_\la\|_{L^\infty}\\
&+\|\na u\|_{L^p}\bigl(1+C(s,p,X)\bigr)+\bigl(\|\na u\|_{L^\infty}+\|\na u\|_{L^p}+\|D_tu\|_{L^2}\bigr)\sup_{\la\in\La}\|\na X_\la\|_{L^p}\Bigr)\Bigr\}.
\end{split}
\eeq
Yet it follows from the estimates following \eqref{S6eq3} that there exists a positive constant $\frak{N},$
which depends on $C_0$ and $C_{\mu,X},$ so that
\beno
C\int_0^{T^\ast}\Bigl(\na u\|_{L^p}+\|D_tu\|_{L^2}+\|D_tu\|_{L^2}^{\f34}\|\na D_tu\|_{L^2}^{\f14}+\|D_tu\|_{L^2}^{\f2p}\|\na D_tu\|_{L^2}^{1-\f2p}\Bigr)\,dt'
\leq \f{\frak{N}}2.
\eeno
Then
we deduce from  \eqref{S6eq1}, \eqref{S6eq2}, \eqref{ener4a} and \eqref{S6eq11} that
\beq\label{S6eq12}
\begin{split}
\|\na u\|_{L^1_t(L^\infty)}\leq & \f{\frak{N}}2\left(1+\frak{K}\frak{B}(\r_0)\exp\bigl(C\|\na u\|_{L^1_t(L^\infty)}\bigr)\right)\with\\
\frak{K}\eqdefa &C_0\Bigl(1+\bigl(1+C(s,p,X_0)\bigr)\sup_{\la\in\La}\|X_\la(0)\|_{L^\infty}+\sup_{\la\in\La}
\|\na X_\la(0)\|_{L^p}\Bigr).
\end{split}
\eeq
In particular, if $\e_0$ in \eqref{S1eq2} is so small that
\beq \label{S6eq13}
\e_0\leq \frac12\frak{K}^{-1}\exp\bigl(-C\frak{N}\bigr),
\eeq
we conclude that
\beq \label{S6eq16}
\begin{split}
\|\na u(t)\|_{L^1_t(L^\infty)} \leq \f{3\frak{N}}4\quad\mbox{for any}\ \ t\in [0,T^\star[.
\end{split}
\eeq
We now take
\beq \label{S6eq14}
\frak{M}\eqdefa 2C_0\Bigl(1+\bigl(1+C(s,p,X_0)\bigr)\sup_{\la\in\La}\|X_\la(0)\|_{L^\infty}+\sup_{\la\in\La}
\|\na X_\la(0)\|_{L^p}\Bigr)\exp\left(C\frak{N}\right).
\eeq
We claim that $T^\star=T^\ast.$ Otherwise, if $T^\star<T^\ast,$
we deduce from \eqref{S6eq2} that
\beq \label{S6eq17}
\|\na X_\la\|_{L^\infty_t(L^p)}\leq \frak{M}/2\quad\mbox{for any}\quad t\in [0,T^\star[,
\eeq
under the assumption that
\beq \label{S6eq15}
\e_0\leq \min\Bigl\{\f{\e_1 I(X_0)}{\frak{M}}\exp\bigl(-\frak{N}\bigr), \frac12\frak{K}^{-1}\exp\bigl(-C\frak{N}\bigr)\Bigr).
\eeq
This shows that under the assumption \eqref{S6eq15}, \eqref{S6eq16} and \eqref{S6eq17} hold on $[0, T^\star[,$ which contradicts with the definition of $T^\star$
given by \eqref{S9eq11as}.  This in turn shows that
$T^\star=T^\ast.$ We complete the proof of the Proposition \ref{S9lem3}.
\end{proof}

\setcounter{equation}{0}
\section{The energy estimate of $u_X$}\label{Sect7}

\begin{lem}\label{energyprop}
{\sl Let \beq \label{S3eq12}
\cY(t)\eqdefa \f12\int_{\R^2}\r|u_X|^2\,dx+\int_{\R^2}\na(-\D)^{-1}\dive(\r u_X) | u\cdot\na X\,dx.
\eeq
 Then under the assumptions of Proposition  \ref{S3col1}, for any $p\in ]2,\infty[$ and any $t<T^\ast, $ there holds
\beq\label{S3eq13}
\begin{split}
\f{d}{dt}\cY(t)+\|\na u_X\|_{L^2}^2\leq &C\bigl(\|\na u\|_{L^\infty}+\|D_tu\|_{L^2}+\w{t}^{-1_+}\bigr)\cY(t)\\
&+C\Bigl(\w{t}^{1_+}\|D_tu\|_{L^2}^2+\|\na u\|_{L^2}^2+\bigl(\|D_tu\|_{L^2}^{1+\f2p}+\|\na u\|_{L^{\f{2p}{p-2}}}^2\\
&\qquad+(\|\na u\|_{L^\infty}+\|D_tu\|_{L^2}+\w{t}^{-1_+})\|u\|_{L^{\f{2p}{p-2}}}^2\bigr)\|\na X\|_{L^p}^2\Bigr).\end{split}
\eeq
 }
\end{lem}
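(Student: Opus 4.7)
The plan is to run the standard energy estimate for $u_X$ based on the system \eqref{XLINS}, but with the kinetic-energy functional augmented by a lower-order correction chosen so that the only badly-controlled pressure contribution cancels identically. First, I would test the momentum equation of \eqref{XLINS} against $u_X$. Since $\dive u_X=\hbar\neq 0$, integrating by parts the pressure term and using the density equation to move the time derivative onto the kinetic density yields
\[
\f12\f{d}{dt}\int_{\R^2}\r|u_X|^2\,dx+2\int_{\R^2}\mu(\r)|\cM(u_X)|^2\,dx=\int_{\R^2}\Pi_X\hbar\,dx+\int_{\R^2}G\cdot u_X\,dx.
\]
The term $\int\Pi_X\hbar\,dx$ has no direct estimate, since $\Pi_X=\p_X\Pi$ is an uncontrolled derivative of the pressure under the sole assumption that $\mu(\r)$ is merely bounded.

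The cancellation comes from the second piece of $\cY$. Setting $w\eqdefa\na(-\D)^{-1}\dive(\r u_X)$ and $h\eqdefa u\cdot\na X$, I would combine the density equation with the $u_X$-equation to write
\[
\p_t(\r u_X)=-\dive(\r u\otimes u_X)+\dive(2\mu\cM(u_X))-\na\Pi_X+G,
\]
and then, applying $\na(-\D)^{-1}\dive$ and using $\na(-\D)^{-1}(-\D\Pi_X)=\na\Pi_X$, obtain
\[
\p_t w=\na\Pi_X+\na(-\D)^{-1}\dive\dive\bigl(2\mu\cM(u_X)-\r u\otimes u_X\bigr)+\na(-\D)^{-1}\dive G.
\]
Testing $\p_t w$ against $h$ produces $\int\na\Pi_X\cdot h\,dx=-\int\Pi_X\hbar\,dx$, which exactly cancels the bad term from the first step and yields the identity
\[
\f{d}{dt}\cY+2\int\mu|\cM(u_X)|^2\,dx=\int G\cdot u_X\,dx+\int\na(-\D)^{-1}\dive\dive\bigl(2\mu\cM(u_X)-\r u\otimes u_X\bigr)\cdot h\,dx+\int\na(-\D)^{-1}\dive G\cdot h\,dx+\int w\cdot\p_t h\,dx.
\]

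From this identity, it is routine to extract \eqref{S3eq13}. The dissipation $2\int\mu|\cM(u_X)|^2\,dx$ controls $\|\na u_X\|_{L^2}^2$ via Korn's inequality, up to corrections bounded by $\|\hbar\|_{L^2}^2\lesssim\|\na u\|_{L^{2p/(p-2)}}^2\|\na X\|_{L^p}^2$. The five contributions from $G$ in \eqref{S3eq7} are treated individually: the $\r_X D_tu$ piece uses $\|\r_X\|_{L^\infty}\leq C_{X,\r_0}$ propagated by the transport equation $D_t\r_X=0$; the $\p_jX\cdot\na\Pi$ piece uses the splitting \eqref{b.4a} with $\|\na\Pi_1\|_{L^2}\lesssim\|D_tu\|_{L^2}$ and $\Pi_2$ handled via the double-Riesz structure; the $\p_X\mu(\r)$ piece follows by integration by parts using the $L^\infty$ bound on $\p_X\mu(\r)$ propagated from the initial data; the two quadratic pieces in $\na X$ are estimated by H\"older with $\|\na u\|_{L^{2p/(p-2)}}$. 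The double-Riesz terms are bounded by $L^2$-continuity of $\na(-\D)^{-1}\dive\dive$, giving $\|\r u\otimes u_X\|_{L^2}\|h\|_{L^2}\lesssim\|u\|_{L^{2p/(p-2)}}^2\|\na X\|_{L^p}\|u_X\|_{L^p}$; Gagliardo-Nirenberg $\|u_X\|_{L^p}\lesssim\|u_X\|_{L^2}^{2/p}\|\na u_X\|_{L^2}^{1-2/p}$ combined with Young's inequality absorbs part into $\tfrac18\|\na u_X\|_{L^2}^2$. Finally $\p_t h$ is expanded from \eqref{Xtrans} via $\p_t X=u_X-u\cdot\na X$ and $\p_t u=D_tu-u\cdot\na u$, and $\int w\cdot\p_t h\,dx$ is controlled through $\|w\|_{L^2}\lesssim\|\r u_X\|_{L^2}\lesssim\cY(t)^{1/2}$. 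The time weights $\w{t}^{\pm 1_+}$ in \eqref{S3eq13} are introduced by splitting each product $AB$ via $AB\leq\e\w{t}^{-1_+}A^2+C_\e\w{t}^{1_+}B^2$ whenever $\cY$ or $\|D_tu\|_{L^2}$ must be separated from a time-decaying factor.

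The main difficulty is precisely the pressure cancellation set up in the first two steps; without the $\int w\cdot h\,dx$ correction in the definition of $\cY$, the term $\int\Pi_X\hbar\,dx$ is simply out of reach, because it would require a gradient-type estimate on $\Pi_X$ in the regime where $\mu(\r)$ is only close-to-constant in $L^\infty$. Once this structural cancellation is in place, the remaining bounds are obtained by standard interpolation plus Young's inequality against the dissipation, with the only care being to distribute the powers of $\w{t}$ correctly between factors.
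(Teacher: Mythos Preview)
Your structural insight is exactly the paper's: the correction term $\int w\cdot h$ in $\cY$ is there precisely so that the uncontrollable pressure contribution $\int\Pi_X\hbar\,dx$ cancels. Your derivation of this cancellation via $\p_tw$ is correct. The paper reaches the same cancellation by a slightly different route: it uses the elliptic formula \eqref{S3eq10} for $\Pi_X$ to write $\int(G-\na\Pi_X)\cdot u_X=\int\PP G\cdot u_X-\int\Q(\r D_tu_X-\dive(2\mu\cM(u_X)))\cdot u_X$, and then decomposes $\int\Q(\r D_tu_X)\cdot u_X$ through the \emph{material} derivative $D_t$, producing the time derivative of $\int w\cdot u_X(=\int w\cdot h)$, a commutator $\int[\Q;u\cdot\na](\r u_X)\cdot u_X$, and a remainder $\int w\cdot D_tu_X$ which is handled via the clean identity \eqref{S4eq2} for $\dive D_tu_X$.

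Your execution sketch has two gaps. First, ``$L^2$-continuity of $\na(-\D)^{-1}\dive\dive$'' is false: that operator is first order. What is bounded on $L^2$ is $(-\D)^{-1}\dive\dive$, so you must first integrate by parts to test against $\hbar=\dive h$; then the bound you wrote is recoverable. Second, and more serious, your claim that $\int w\cdot\p_th$ is ``controlled through $\|w\|_{L^2}$'' fails: expanding $\p_th$ produces $u\cdot\na u_X$, and in two dimensions you have neither $u\in L^\infty$ nor $\na w\in L^2$ (the latter would require control of $\na\r$, which is unavailable). The only way to close is to \emph{recombine} this piece with your term C: since $w$ is a gradient and $\dive u_X=\dive h$, one has $\int w\cdot(u\cdot\na u_X)=-\int(u\cdot\na w)\cdot u_X$, and writing $u\cdot\na w=\Q(u\cdot\na(\r u_X))+[u\cdot\na;\Q](\r u_X)$ shows that the principal part equals $-C$ plus a commutator bounded by $\|\na u\|_{L^\infty}\|u_X\|_{L^2}^2$. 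This is precisely the paper's $D_t$-based organization in disguise; without carrying out this recombination you cannot reach the form \eqref{S3eq13}.
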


\begin{proof} We first get, by taking $L^2$ inner product of $u_X$ with the momentum equation of \eqref{XLINS}, that
\beq \label{S3eq9}
\f12\f{d}{dt}\int_{\R^2}\r|u_X(t)|^2\,dx+2\int_{\R^2}\mu(\r)|\cM(u_X)|^2\,dx=\int_{\R^2}(G-\na \Pi_X) | u_X\,dx.
\eeq
To handle the term $\Pi_X$, we  take the divergence operator to the momentum equation
of \eqref{XLINS} to get
\beq \label{S3eq10}
\Pi_X=(-\D)^{-1}\dive\bigl(\r D_tu_X-\dive(2\mu(\r)\cM(u_X)\bigr)-\dive(-\D)^{-1}G.
\eeq
Then we have
\beq \label{S3eq10a}
\begin{split}
\int_{\R^2}(G-\na \Pi_X) | u_X\,dx=&\int_{\R^2}\PP G | u_X\,dx\\
&-\int_{\R^2}\na(-\D)^{-1}\dive\bigl(\r D_t u_X-\dive(2\mu(\r)\cM(u_X))\bigr) | u_X\,dx.
\end{split}
\eeq
Next let us handle term by term above.  By using integration by parts and $D_t\r=0,$ we find
\beq\label{S3eq11}
\begin{split}
\int_{\R^2}&\na(-\D)^{-1}\dive(\r D_t u_X) | u_X\,dx=\f{d}{dt}\int_{\R^2}\na(-\D)^{-1}\dive(\r u_X) | u_X\,dx\\
&+\int_{\R^2}[\na(-\D)^{-1}\dive; u\cdot\na](\r u_X) | u_X\,dx-\int_{\R^2}\na(-\D)^{-1}\dive(\r u_X) | D_tu_X\,dx.
\end{split}
\eeq
Due to $\dive u_X=\dive(u\cdot\na X),$ one has
\beno
\begin{split}
\int_{\R^2}\na(-\D)^{-1}\dive(\r u_X) | u_X\,dx=&-\int_{\R^2}(-\D)^{-1}\dive(\r u_X) | \dive u_X\,dx\\
= &\int_{\R^2}\na(-\D)^{-1}\dive(\r u_X) | u\cdot\na X\,dx.
\end{split}
\eeno
While it follows from commutator's estimate that
\beno
\bigl|\int_{\R^2}[\na(-\D)^{-1}\dive; u\cdot\na](\r u_X) | u_X\,dx\bigr|\leq C\|\na u\|_{L^\infty}\|u_X\|_{L^2}^2.
\eeno
Whereas note that $\dive X=0$ and $\dive u_X=\p_iX^k\p_ku^i,$ we deduce   from \eqref{Xtrans} that
\beq \label{S4eq2}
\begin{split}
\dive D_tu_X=&\p_t\dive u_X+\dive(u\cdot\na u_X)\\
=&D_t(\p_iX^k\p_ku^i)+\p_iu\cdot\na u_X^i\\
=&[D_t; \p_i]X^k\p_ku^i+\p_iD_tX^k\p_ku^i+\p_iX^kD_t\p_ku^i+\p_iu\cdot\na u_X^i\\
=&-\p_iu^\ell\p_\ell X^k\p_ku^i(=0)+\p_iu^k_X\p_ku^i+\p_iX^kD_t\p_ku^i+\p_iu\cdot\na u_X^i\\
=&2\p_iu_X^k\p_ku^i+\p_iX^k\p_kD_tu^i,
\end{split}
\eeq
so that, by using integration by parts, we achieve
\beno
\begin{split}
-\int_{\R^2}\na(-\D)^{-1}\dive(\r u_X) | D_tu_X\,dx=&\int_{\R^2}(-\D)^{-1}\dive(\r u_X) | \dive D_tu_X\,dx\\
=&-\int_{\R^2}\na(-\D)^{-1}\dive(\r u_X) | (2u_X\cdot\na u+D_tu\cdot\na X)\,dx,
\end{split}
\eeno
from which and the following interpolation inequality
\beno
\|a\|_{L^q}\leq C\|a\|_{L^2}^{\f2q}\|\na a\|_{L^2}^{1-\f2q}\quad \forall \ q\in ]2,\infty[,
\eeno
 we infer for any $\eta>0,$
\beno
\begin{split}
\bigl|\int_{\R^2}&\na(-\D)^{-1}\dive(\r u_X) | D_tu_X\,dx\bigr|\\
\leq & C\bigl(\|\na u\|_{L^\infty}\|u_X\|_{L^2}^2+\|u_X\|_{L^{\f{2p}{p-2}}}\|D_tu\|_{L^2}\|\na X\|_{L^p}\bigr)\\
\leq & C\bigl(\|\na u\|_{L^\infty}\|u_X\|_{L^2}^2+\|u_X\|_{L^2}^{1-\f2p}\|\na u_X\|_{L^2}^{\f2p}\|D_tu\|_{L^2}\|\na X\|_{L^p}\bigr)\\
\leq &\eta\|\na u_X\|_{L^2}^2+C_\eta\bigl((\|\na u\|_{L^\infty}+\|D_tu\|_{L^2})\|u_X\|_{L^2}^2+\|D_tu\|_{L^2}^{1+\f2p}\|\na X\|_{L^p}^2\bigr).
\end{split}
\eeno
Substituting the above estimates into \eqref{S3eq11} yields
\beq \label{S3eq11a}
\begin{split}
\bigl|\int_{\R^2}&\na(-\D)^{-1}\dive(\r D_t u_X) | u_X\,dx-\f{d}{dt}\int_{\R^2}\na(-\D)^{-1}\dive(\r u_X) | u\cdot\na X\,dx\bigr|\\
&\leq \eta\|\na u_X\|_{L^2}^2+C_\eta\bigl((\|\na u\|_{L^\infty}+\|D_tu\|_{L^2})\|u_X\|_{L^2}^2+\|D_tu\|_{L^2}^{1+\f2p}\|\na X\|_{L^p}^2\bigr).
\end{split}
\eeq

To handle the last term in \eqref{S3eq10a}, by using integration by parts and $\dive u_X=\mbox{tr}(\na X\na u),$ we obtain
\beq \label{S3eq11b}
\begin{split}
\bigl|\int_{\R^2}&\na(-\D)^{-1}\dive\dive(2\mu(\r)\cM(u_X)) | u_X\,dx\bigr|\\
=&\bigl|\int_{\R^2}(-\D)^{-1}\dive\dive(2\mu(\r)\cM(u_X)) | \mbox{tr}(\na X\na u)\,dx\bigr|\\
\leq &C\|\mu(\r)\|_{L^\infty}\|\na u_X\|_{L^2}\|\na X\|_{L^p}\|\na u\|_{L^{\f{2p}{p-2}}}\\
\leq &\eta\|\na u_X\|_{L^2}^2+C_\eta\|\na u\|_{L^{\f{2p}{p-2}}}^2\|\na X\|_{L^p}^2.
\end{split}
\eeq

It remains to deal with the term $\int_{\R^2}\PP G | u_X\,dx.$ Indeed it follows from the transport equation of \eqref{XLINS} that
\beno
\begin{split}
\bigl|\int_{\R^2}\PP(\r_XD_tu)  | u_X\,dx\bigr|\leq& C\|\r_X\|_{L^\infty}\|D_tu\|_{L^2}\|u_X\|_{L^2}\\
\leq &C\w{t}^{-1_+}\|u_X\|_{L^2}^2+C\w{t}^{1_+}\|D_tu\|_{L^2}^2,\\
\bigl|\int_{\R^2}\PP\dive(2\p_X\mu(\r)\cM(u))  | u_X\,dx\bigr|\leq& C\|\p_X\mu(\r)\|_{L^\infty}\|\na u\|_{L^2}\|\na u_X\|_{L^2}\\
\leq &\eta\|\na u_X\|_{L^2}^2+C_\eta\|\na u\|_{L^2}^2.
\end{split}
\eeno
While in view of \eqref{b.4a} and $\dive X=0,$ we write
\beno
\begin{split}
\int_{\R^2}\PP\bigl(\na X^k\p_k\Pi\bigr) | u_X\,dx=&\int_{\R^2}\PP\bigl(\na X^k\p_k(-\D)^{-1}\dive(\r D_tu)\bigr) |  u_X\,dx\\
&+\int_{\R^2}\PP\bigl(\na X^k(-\D)^{-1}\dive\dive(2\mu(\r)\cM(u))\bigr) | \p_k u_X\,dx,
\end{split}
\eeno
so that we obtain
\beno
\begin{split}
\bigl|\int_{\R^2}\PP\bigl(\na X^k\p_k \Pi\bigr) | u_X^j\,dx\bigr|&\leq C\|\na X\|_{L^p}\bigl(\|D_tu\|_{L^2}\|u_X\|_{L^{\f{2p}{p-2}}}
+\|\na u\|_{L^{\f{2p}{p-2}}}\|\na u_X\|_{L^2}\bigr)\\
&\leq \eta\|\na u_X\|_{L^2}^2+C_\eta\Bigl(\|D_tu\|_{L^2}\|u_X\|_{L^2}^2\\
&\qquad\qquad\qquad\qquad\quad+\bigl(\|\na u\|_{L^{\f{2p}{p-2}}}^2+\|D_tu\|_{L^2}^{1+\f2p}\bigr)\|\na X\|_{L^p}^2\Bigr).
\end{split}
\eeno
Finally due to the $\dive X=0,$ we get, by using integration by parts, that
\beno
\begin{split}
\bigl|\int_{\R^2}\PP\bigl(\p_iX\cdot\na(\mu(\r)\cM_{ij}(u))\bigr)  | u_X\,dx\big|\leq &C\|\na X\|_{L^p}\|\na u\|_{L^{\f{2p}{p-2}}}\|\na u_X\|_{L^2}\\
\leq &\eta\|\na u_X\|_{L^2}^2+C_\eta\|\na u\|_{L^{\f{2p}{p-2}}}^2\|\na X\|_{L^p}^2,\\
\bigl|\int_{\R^2}\PP\p_i\bigl(\mu(\r)(\p_iX\cdot\na u+\na X\cdot\na u^i)\bigr) | u_X\,dx\bigr|\leq &C\|\na X\|_{L^p}\|\na u\|_{L^{\f{2p}{p-2}}}\|\na u_X\|_{L^2}\\
\leq &\eta\|\na u_X\|_{L^2}^2+C_\eta\|\na u\|_{L^{\f{2p}{p-2}}}^2\|\na X\|_{L^p}^2.
\end{split}
\eeno
As a result, it comes out
\beq \label{S3eq11c}
\begin{split}
\bigl|\int_{\R^2}\PP G |& u_X\,dx\bigr|\leq 4\eta \|\na u_X\|_{L^2}^2+ C_\eta\bigl(\|D_tu\|_{L^2}+\w{t}^{-1_+}\bigr)\|u_X\|_{L^2}^2
\\
&+C_\eta\Bigl(\w{t}^{1_+}\|D_tu\|_{L^2}^2+\|\na u\|_{L^2}^2+\bigl(\|D_tu\|_{L^2}^{1+\f2p}+\|\na u\|_{L^{\f{2p}{p-2}}}^2\bigr)\|\na X\|_{L^p}^2\Bigr).\end{split}
\eeq

Notice that
\beno
\bigl|\int_{\R^2}\na(-\D)^{-1}\dive(\r u_X) | u\cdot\na X\,dx\bigr|\leq C\|u_X\|_{L^2}\|u\|_{L^{\f{2p}{p-2}}}\|\na X\|_{L^p},
\eeno
we have
\beno
\cY(t)\geq c\|u_X\|_{L^2}^2-C\|u\|_{L^{\f{2p}{p-2}}}^2\|\na X\|_{L^p}^2.
\eeno
By inserting the  Estimates \eqref{S3eq11a}, \eqref{S3eq11b} and \eqref{S3eq11c}  into \eqref{S3eq10a} and then substituting the resulting
inequality into \eqref{S3eq9} and taking $\eta$ sufficiently small, we achieve \eqref{S3eq13}. This completes the proof of the lemma.
\end{proof}

\begin{proof}[Proof of Proposition \ref{S3col1}] By multiplying \eqref{S3eq13} with $\w{t}^{2\d_-}$, we find
\beq\label{S3eq15}
\begin{split}
\f{d}{dt}&\bigl(\w{t}^{2\d_-}\cY(t)\bigr)+\|\w{t}^{\d_-}\na u_X\|_{L^2}^2\leq  C\bigl(\|\na u\|_{L^\infty}+\|D_tu\|_{L^2}+\w{t}^{-1_+}\bigr)\w{t}^{2\d_-}\cY(t)\\
&+2\d\w{t}^{2\d_--1}\cY(t)
+C\w{t}^{2\d_-}\Bigl(\w{t}^{1_+}\|D_tu\|_{L^2}^2+\|\na u\|_{L^2}^2\\
&+\bigl((\|\na u\|_{L^\infty}+\|D_tu\|_{L^2}+\w{t}^{-1_+})\|u\|_{L^{\f{2p}{p-2}}}^2+\|D_tu\|_{L^2}^{1+\f2p}+\|\na u\|_{L^{\f{2p}{p-2}}}^2\bigr)\|\na X\|_{L^p}^2\Bigr).\end{split}
\eeq
It is easy to observe from Proposition \ref{S2prop2} that
\beno
\begin{split}
\int_0^t\w{t'}^{2\d_--1}\cY(t')\,dt'\leq &\int_0^t\w{t'}^{2\d_--1}\bigl(\|X\|_{L^\infty}^2\|\na u\|_{L^2}^2+\|X\|_{L^\infty}\|\na X\|_{L^p}\|u\|_{L^{\f{2p}{p-2}}}\|\na u\|_{L^2}\bigr)\,dt'\\
\leq &C_0\bigl(\|X\|_{L^\infty_t(L^\infty)}^2+\|\na X\|_{L^\infty(L^p)}^2\bigr),
\end{split}
\eeno
and due to  $p<2\left(1+1/\d\right),$ one has
\beno
\begin{split}
\int_0^t\w{t'}^{2\d_-}\|D_tu(t')\|_{L^2}^{1+\f2p}\,dt'\leq \Bigl(\int_0^t\w{t'}^{2\d_--\f{p+2}{p-2}}\,dt'\Bigr)^{\f{p-2}{2p}}
\bigl\|\w{t}^{\left(\f12+\d\right)_-}D_tu\bigr\|_{L^2_t(L^2)}^{\f{p+2}p}\leq C_0.
\end{split}
\eeno
Similarly it follows from \eqref{S2eq1} and
Proposition \ref{S2prop1} that
\beno
\int_0^t\w{t'}^{2\d_-}\|\na u(t')\|_{L^{\f{2p}{p-2}}}^2\,dt'\leq C\|\w{t}^{\d}\na u\|_{L^2_t(L^2)}^{2\left(1-\f2p\right)}\|\w{t}^{\d_-} D_tu\|_{L^2_t(L^2)}^{\f4p}\leq C_0.
\eeno
Hence thanks to Proposition \ref{S2prop2} and the fact that $|\cY(0)|\leq \bigl(\|X_{0}\|_{L^\infty}^2+\|\na X_0\|_{L^p}^2\bigr)\|u_0\|_{H^1}^2,$  by applying Gronwall 's inequality to \eqref{S3eq15}, we achieve \eqref{S3eq14}.
\end{proof}

\setcounter{equation}{0}
\section{Energy estimate of $\na u_X$}\label{Sect8}

\begin{lem}\label{S4lem1}
{\sl Under the assumptions of Lemma \ref{energyprop}, for any $t<T^\ast,$ one has
\beq \label{S4eq0}
\begin{split}
\f{d}{dt}&\int_{\R^2}\mu(\r)|\cM(u_X)|^2\,dx+\int_{\R^2}\r|D_tu_X|^2\,dx\\
= &\f{d}{dt}\int_{\R^2}\na X^k(-\D)^{-1}\dive\dive\left(2\mu(\r)\cM(u)\right) | \p_k \PP u_X\,dx\\
&-\f{d}{dt}\Bigl(\int_{\R^2}\p_X\mu(\r)\cM(u) :\PP \na  u_X\,dx+\int_{\R^2}\mu(\r)\p_iX^k\cM_{ij}(u) |  \PP^j \p_ku_X^j\,dx\Bigr)\\
&-\f{d}{dt}\int_{\R^2}\mu(\r)\left(\p_iX\cdot\na u+\na X\cdot\na u^i\right)
| \p_i\PP u_X\,dx+{\rm r},
\end{split}
\eeq where $\PP=(\PP^1,\PP^2)=Id+\na(-\D)^{-1}\dive$ denotes the leray projection operator to the solenoidal vector field spaces, and ${\rm r}$ satisfies
\beno
\begin{split}
|{\rm r}|\leq &C\Bigl(\bigl(\|\na u\|_{L^\infty}+\|u\|_{L^\infty}^2\bigr)\|\na u_X\|_{L^2}^2+\|D_tu\|_{L^{\f{2p}{p-2}}}^2\|\na X\|_{L^p}^2+\|D_tu\|_{L^2}^2\Bigr)\\
&+C\bigl((\|\na D_tu\|_{L^2}+\|\na u\|_{L^4}^2)\|\na X\|_{L^p}
+\|\na u\|_{L^p}\|\na u_X\|_{L^2}\bigr)\|\na u_X\|_{L^{\f{2p}{p-2}}}\\
&+C\Bigl(\bigl(\|\na u\|_{L^{\f{4p}{p-2}}}^2+\|\na u\|_{L^\infty}\|\na u\|_{L^{\f{2p}{p-2}}}\bigr)\|\na X\|_{L^p}
\\
&\qquad\qquad\qquad\qquad+\|\na u\|_{L^4}^2+\|\na u\|_{L^2}\|\na u\|_{L^\infty}+\|\na D_tu\|_{L^2}\Bigr)\|\na u_X\|_{L^2}.
\end{split} \eeno
}
\end{lem}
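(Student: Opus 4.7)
The natural starting point is Lemma~\ref{S2lem1} applied to the linearized system~\eqref{XLINS}, taking $v=u_X$, $F=G$, $\pi=\Pi_X$, and reading off the non-trivial divergence $\hbar=\dive u_X=\p_iX^k\p_ku^i$ from~\eqref{XLINS}. This produces the identity
\beno
\f{d}{dt}\int_{\R^2}\mu(\r)|\cM(u_X)|^2\,dx+\int_{\R^2}\r|D_tu_X|^2\,dx=\int_{\R^2}(G-\na\Pi_X)\cdot D_tu_X\,dx-\cR_0,
\eeno
where $\cR_0$ is the explicit cubic correction from the rearrangement of $\cM(u_X){:}\cM(D_tu_X)$ displayed in~\eqref{ener4} and satisfies $|\cR_0|\leq C\|\na u\|_{L^\infty}\|\na u_X\|_{L^2}^2$, so it falls into ${\rm r}$. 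The plan is then to reorganize $\int(G-\na\Pi_X)\cdot D_tu_X\,dx$ as the four $\f{d}{dt}(\cdots)$ terms on the right-hand side of~\eqref{S4eq0} plus a remainder of the stated size.

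I would treat the five summands of $G_j$ from~\eqref{S3eq7} in parallel. The transport piece $-\r_XD_tu^j$ paired with $D_tu_X^j$, combined with $D_t\r_X=0$ (hence $\|\r_X\|_{L^\infty}\leq\|\p_{X_0}\r_0\|_{L^\infty}$), is absorbed into $\f14\int\r|D_tu_X|^2\,dx+C\|D_tu\|_{L^2}^2$ by Young's inequality. For each viscous-type summand $\p_i(\cdots)$, I integrate by parts in $x$ to move one derivative onto $D_tu_X^j$, commute via $[\p_i,D_t]=-\p_iu\cdot\na$ to rewrite $\p_iD_tu_X^j=D_t\p_iu_X^j+\p_iu\cdot\na u_X^j$, and exploit $D_t\mu(\r)=D_t(\p_X\mu(\r))=0$ to pull $D_t$ outside as a genuine $\f{d}{dt}$. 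Writing $u_X=\PP u_X+\Q u_X$ with $\Q u_X=\na(-\D)^{-1}\mathrm{tr}(\na X\na u)$, the $\PP$-parts of the resulting boundary terms match the second and third time-derivative terms in~\eqref{S4eq0}, while the $\Q$-parts are second Riesz transforms of $\mathrm{tr}(\na X\na u)$ and enter ${\rm r}$ through bounds of type $\|\na u\|_{L^{4p/(p-2)}}^2\|\na X\|_{L^p}\|\na u_X\|_{L^2}$. The commutator leftovers $\int\p_X\mu(\r)\,\p_iu\cdot\na u_X^j\,\cM_{ij}(u)\,dx$ and its analogues yield the $\|\na u\|_{L^\infty}\|\na u_X\|_{L^2}^2$ and $\|\na u\|_{L^2}\|\na u\|_{L^\infty}\|\na u_X\|_{L^2}$ pieces of ${\rm r}$.

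The piece $\p_jX\cdot\na\Pi$ of $G$ is split using $\Pi=\Pi_1+\Pi_2$ of~\eqref{b.4a}. The $\Pi_1$ contribution, after one integration by parts using $\dive X=0$ and the Calder\'on--Zygmund bound $\|\na\Pi_1\|_{L^2}\lesssim\|D_tu\|_{L^2}$, is controlled by $C\|D_tu\|_{L^2}\|\na X\|_{L^p}\|\na u_X\|_{L^{2p/(p-2)}}$ and goes into ${\rm r}$. The $\Pi_2$ contribution is recast as $-\f{d}{dt}\int\na X^k\,\Pi_2\,\p_k\PP u_X\,dx$ modulo lower-order pieces by moving $\p_t$ across and invoking $\p_tX=u_X-u\cdot\na X$ from~\eqref{Xtrans} together with $D_t\mu(\r)=0$; this produces the first time-derivative term of~\eqref{S4eq0}. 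Finally, for $-\int\na\Pi_X\cdot D_tu_X\,dx$, I would substitute~\eqref{S3eq10} together with the divergence identity~\eqref{S4eq2}, $\dive(D_tu_X)=2\p_iu_X^k\p_ku^i+\p_iX^k\p_kD_tu^i$. The first contribution yields $\cM(u_X)\otimes\na u$-type terms bounded by $C\|\na u\|_{L^\infty}\|\na u_X\|_{L^2}^2$, while the second, after one integration by parts in $k$ using $\dive X=0$, produces $\int\na\Pi_X\cdot\na X\cdot D_tu\,dx$, controlled by $C(\|\na D_tu\|_{L^2}+\|\na u\|_{L^4}^2)\|\na X\|_{L^p}\|\na u_X\|_{L^{2p/(p-2)}}+C\|D_tu\|_{L^{2p/(p-2)}}^2\|\na X\|_{L^p}^2$, all inside ${\rm r}$.

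The main obstacle is purely algebraic bookkeeping: one must verify that every commutator produced by interchanging $D_t$ with $\p_i$ and with $\PP$ lands in the precise list of norms used to bound ${\rm r}$. Conceptually the one delicate point is that $u_X$ is \emph{not} solenoidal, $\dive u_X=\mathrm{tr}(\na X\na u)$, so each $\PP$ appearing in~\eqref{S4eq0} must be introduced by hand through $u_X=\PP u_X+\Q u_X$, and the smooth $\Q$-contribution must then be traded against an extra $\|\na X\|_{L^p}$ factor. A secondary subtlety is that $\Pi_X$ from~\eqref{S3eq10} contains a $\dive(\r D_tu_X)$ piece: the corresponding contribution must be handled jointly with the $2\p_iu_X^k\p_ku^i$ term so that the resulting $\int\r|D_tu_X|^2$-type factors either cancel or are absorbed into the left-hand side via Young's inequality, leaving only the $\|\na u\|_{L^\infty}\|\na u_X\|_{L^2}^2$ remainder in ${\rm r}$.
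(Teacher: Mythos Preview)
Your overall plan is on the right track—start from Lemma~\ref{S2lem1} applied to \eqref{XLINS} and reorganize $(G-\na\Pi_X,\,D_tu_X)$ into time-derivative pieces plus remainder—but the way $\PP$ is introduced is not the one you describe, and as written your route has a gap. The paper does \emph{not} decompose $u_X=\PP u_X+\Q u_X$ in the boundary terms. Instead it invokes \eqref{S3eq10} at the outset to obtain
\[
\int_{\R^2}(G-\na\Pi_X)\cdot D_tu_X\,dx=\int_{\R^2}\PP G\cdot D_tu_X\,dx
-\int_{\R^2}\na(-\D)^{-1}\dive\bigl(\r D_tu_X-\dive(2\mu(\r)\cM(u_X))\bigr)\cdot D_tu_X\,dx,
\]
so the $-\dive(-\D)^{-1}G$ piece of $\Pi_X$ combines with $G$ to produce $\PP G$ directly. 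Since $\PP$ is self-adjoint, $(\PP G,D_tu_X)=(G,\PP D_tu_X)$; after integrating each viscous-type summand of $G$ by parts one faces $\p_i\PP D_tu_X$, which the paper rewrites as $\PP(\p_iu\cdot\na u_X)+[\PP;u\cdot\na]\p_iu_X+D_t\p_i\PP u_X$. The commutator $[\PP;u\cdot\na]$—absent from your plan—supplies the $\|\na u\|_{L^\infty}\|\na u\|_{L^{2p/(p-2)}}\|\na X\|_{L^p}\|\na u_X\|_{L^2}$ piece of ${\rm r}$, and the final $D_t\p_i\PP u_X$ is what allows the time derivative to be pulled out with $\PP u_X$ already in place.

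Your route instead produces $\f{d}{dt}\int(\cdots)\p_iu_X$ with no $\PP$, and then inserts $u_X=\PP u_X+\Q u_X$. But the $\Q$-contribution $\f{d}{dt}\int(\cdots)\p_i\Q u_X$ is still a time derivative: it cannot be placed in ${\rm r}$ as you claim, and expanding it forces you to compute $D_t\bigl(\na^2(-\D)^{-1}\mathrm{tr}(\na X\na u)\bigr)$, bringing in $\na D_tu$ and $\na u_X$ factors beyond what you list. This is not an accident: your treatment of $-\na\Pi_X$ via \eqref{S3eq10} silently drops the $-\dive(-\D)^{-1}G$ term of $\Pi_X$, and it is precisely this term that, once kept, converts $G$ into $\PP G$ and renders the $\Q u_X$ detour unnecessary. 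With the paper's decomposition in hand, the two remaining pressure-like terms are handled via \eqref{S4eq2} as you indicate; note that the $\na(-\D)^{-1}\dive(\r D_tu_X)$ contribution is what produces the $\|u\|_{L^\infty}^2\|\na u_X\|_{L^2}^2$ term of ${\rm r}$ after Young's inequality (that is $\|u\|_{L^\infty}$, not merely $\|\na u\|_{L^\infty}$).
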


\begin{proof}
In view of \eqref{XLINS}, we get, by applying Lemma \ref{S2lem1}, that
\beno \label{S4eq1}
\begin{split}
\f{d}{dt}&\int_{\R^2}\mu(\r)|\cM(u_X)|^2\,dx+\int_{\R^2}\r|D_tu_X|^2\,dx\\
&\qquad\leq C\|\na u\|_{L^\infty}\|\na u_X\|_{L^2}^2+\int_{\R^2}(G-\na\Pi_X) | D_tu_X\,dx,
\end{split}
\eeno
from which and \eqref{S3eq10}, we infer
\beq \label{S4eq10}
\begin{split}
\f{d}{dt}\int_{\R^2}\mu(\r)|\cM(u_X)|^2\,dx&+\int_{\R^2}\r|D_tu_X|^2\,dx\leq C\|\na u\|_{L^\infty}\|\na u_X\|_{L^2}^2\\
&+\bigl|\int_{\R^2}\na(-\D)^{-1}\dive\dive(2\mu(\r)\cM(u_X))|  D_tu_X\,dx\bigr|\\
&+\bigl|\int_{\R^2}\na(-\D)^{-1}\dive(\r D_tu_X) |  D_tu_X\,dx\bigr|+\bigl|\int_{\R^2}\PP G | D_tu_X\,dx\bigr|.
\end{split}
\eeq
Thanks to \eqref{S4eq2}, we get, by using integration by parts, that
\beno
\begin{split}
\bigl|\int_{\R^2}&\na(-\D)^{-1}\dive(\r D_tu_X) |  D_tu_X\,dx\bigr|\\
=&\bigl|\int_{\R^2}(-\D)^{-1}\dive(\r D_tu_X) |  \dive D_tu_X\,dx\bigr|\\
=&\bigl|\int_{\R^2}\na(-\D)^{-1}\dive(\r D_tu_X) |  \bigl(2u\cdot\na u_X+D_tu\cdot\na X\bigr)\,dx\bigr|\\
\leq &C\|\r D_tu_X\|_{L^2}\bigl(\|u\|_{L^\infty}\|\na u_X\|_{L^2}+\|D_tu\|_{L^{\f{2p}{p-2}}}\|\na X\|_{L^p}\bigr),
\end{split}
\eeno
Similarly, one has
\beno
\begin{split}
&\bigl|\int_{\R^2}\na(-\D)^{-1}\dive\dive(2\mu(\r)\cM(u_X))|  D_tu_X\,dx\bigr|\\
&=\bigl|\int_{\R^2}(-\D)^{-1}\dive\dive(2\mu(\r)\cM(u_X))|  \bigl(2\mbox{tr}(\na u_X\na u)+\mbox{tr}(\na X\na D_tu)\bigr)\,dx\bigr|\\
&\leq C\bigl(\|\na u\|_{L^\infty}\|\na u_X\|_{L^2}^2+\|\na u_X\|_{L^{\f{2p}{p-2}}}\|\na X\|_{L^p}\|\na D_tu\|_{L^2}\bigr).
\end{split}
\eeno

It remains to deal with the last term in \eqref{S4eq10}, which we shall handle term by term below.

\no$\bullet$ \underline{The estimate of $\bigl(\PP (\na X^k\p_k\Pi) | D_t u_X \bigr)$}

It follows from \eqref{b.4a} that
\beno
\begin{split}
\int_{\R^2}\PP(\na X^k\p_k\Pi) | D_t u_X\,dx=&\int_{\R^2}\na X^k\p_k(-\D)^{-1}\dive(\r D_tu)  | \PP D_t u_X\,dx\\
&-\int_{\R^2}\na X^k\p_k(-\D)^{-1}\dive\dive\left(2\mu(\r)\cM(u)\right) | \PP D_t u_X\,dx.
\end{split}
\eeno
It is easy to observe that
\beno
\bigl|\int_{\R^2}\na X^k\p_k(-\D)^{-1}\dive(\r D_tu)  | \PP D_t u_X\,dx\bigr|\leq C\|\na X\|_{L^p}\|D_tu\|_{L^{\f{2p}{p-2}}}\|D_tu_X\|_{L^2}.
\eeno
By using integration by parts, one has
\beno
\begin{split}
&-\int_{\R^2}\na X^k\p_k(-\D)^{-1}\dive\dive\left(2\mu(\r)\cM(u)\right) | \PP D_t u_X\,dx\\
&=\int_{\R^2}\na X^k(-\D)^{-1}\dive\dive\left(2\mu(\r)\cM(u)\right) |\bigl(\PP(\p_ku\cdot\na u_X)+ [\PP;u\cdot\na]\p_ku_X+D_t \PP\p_ku_X\bigr)\,dx.
\end{split}
\eeno
We first deduce that
\beno
\bigl|\int_{\R^2}\na X^k(-\D)^{-1}\dive\dive\left(2\mu(\r)\cM(u)\right) | \PP(\p_ku\cdot\na u_X)\,dx\bigr|
\leq C\|\na u\|_{L^{\f{4p}{p-2}}}^2\|\na X\|_{L^p}\|\na u_X\|_{L^2}.
\eeno
Applying classical commutator's estimate yields
\beno
\begin{split}
\bigl|\int_{\R^2}\na X^k(-\D)^{-1}\dive\dive\left(2\mu(\r)\cM(u)\right)& | [\PP; u\cdot\na]\p_ku_X \,dx\bigr|\\
&\leq C\|\na X\|_{L^p}\|\na u\|_{L^{\f{2p}{p-2}}}\|\na u\|_{L^\infty}\|\na u_X\|_{L^2}.
\end{split}
\eeno
While we get, by using integration by parts, that
\beno
\begin{split}
\int_{\R^2}&\na X^k(-\D)^{-1}\dive\dive\left(2\mu(\r)\cM(u)\right) | D_t\p_k \PP u_X\,dx\\
=&\f{d}{dt}\int_{\R^2}\na X^k(-\D)^{-1}\dive\dive\left(2\mu(\r)\cM(u)\right) | \p_k \PP u_X\,dx\\
&-\int_{\R^2}\bigl(D_t\na X^k(-\D)^{-1}\dive\dive\left(2\mu(\r)\cM(u)\right)\\
&\qquad\quad+ \na X^kD_t(-\D)^{-1}\dive\dive\left(2\mu(\r)\cM(u)\right)\bigr)| \p_k \PP u_X\,dx.
\end{split}
\eeno
By virtue of \eqref{Xtrans}, we find
\beno
\begin{split}
\int_{\R^2}&D_t\na X^k(-\D)^{-1}\dive\dive\left(2\mu(\r)\cM(u)\right)| \p_k \PP u_X\,dx\\
=&\int_{\R^2}\bigl(-\na u\cdot\na X^k(-\D)^{-1}\dive\dive\left(2\mu(\r)\cM(u)\right)\\
&\qquad\qquad+\na u_X^k(-\D)^{-1}\dive\dive\left(2\mu(\r)\cM(u)\right)\bigr) | \p_k \PP u_X\,dx,
\end{split}
\eeno
which gives rise to
$$\longformule{
\bigl|\int_{\R^2}D_t\na X^k(-\D)^{-1}\dive\dive\left(2\mu(\r)\cM(u)\right)| \p_k \PP u_X\,dx\bigr|}{{}
\leq C\bigl(\|\na u\|_{L^{\f{4p}{p-2}}}^2\|\na X\|_{L^p}
+\|\na u\|_{L^p}\|\na u_X\|_{L^{\f{2p}{p-2}}}\bigr)\|\na u_X\|_{L^2}.}
$$
Whereas by using a commutator's argument, we write
\beno
\begin{split}
\int_{\R^2}&\na X^kD_t(-\D)^{-1}\dive\dive\left(2\mu(\r)\cM(u)\right)\bigr)| \p_k \PP u_X\,dx\\
=&\int_{\R^2}\bigl(\na X^k[u\cdot\na; (-\D)^{-1}\dive\dive]\left(2\mu(\r)\cM(u)\right)\\
&\qquad\quad +\na X^k(-\D)^{-1}\dive\dive\left(2\mu(\r)D_t\cM(u)\right)\bigr) | \p_k \PP u_X\,dx,
\end{split}
\eeno
which together with the classical commutator estimate implies
\beno
\begin{split}
\bigl|\int_{\R^2}&\na X^kD_t(-\D)^{-1}\dive\dive\left(2\mu(\r)\cM(u)\right)\bigr)| \p_k \PP u_X\,dx\bigr|\\
\leq &C\|\na X\|_{L^p}\bigl(\|\na u\|_{L^\infty}\|\na u\|_{L^{\f{2p}{p-2}}}\|\na u_X\|_{L^2}+
(\|\na D_tu\|_{L^2}+\|\na u\|_{L^4}^2)\|\na u_X\|_{L^{\f{2p}{p-2}}}\bigr).
\end{split}
\eeno
By summarizing the above estimates, we obtain
\beq\label{S4eq6}
\int_{\R^2}\PP(\na X^k\p_k\Pi) | D_t u_X\,dx=\f{d}{dt}\int_{\R^2}\na X^k(-\D)^{-1}\dive\dive\left(2\mu(\r)\cM(u)\right) | \p_k \PP u_X\,dx+r_1
\eeq
with $r_1$ satisfying
\beno
\begin{split}
|r_1|\leq C\|\na X\|_{L^p}\Bigl(&\|D_tu\|_{L^{\f{2p}{p-2}}}\|D_tu_X\|_{L^2}+
\bigl(\|\na u\|_{L^{\f{4p}{p-2}}}^2+\|\na u\|_{L^\infty}\|\na u\|_{L^{\f{2p}{p-2}}}\bigr)\|\na u_X\|_{L^2}\\
&+
(\|\na D_tu\|_{L^2}+\|\na u\|_{L^4}^2)\|\na u_X\|_{L^{\f{2p}{p-2}}}\Bigr)+C\|\na u\|_{L^p}\|\na u_X\|_{L^{\f{2p}{p-2}}}\|\na u_X\|_{L^2}.
\end{split}
\eeno

\no$\bullet$ \underline{The estimate of $\bigl(\PP \dive\left(\p_X\mu(\r)\cM(u)\right) | D_t u_X \bigr)$}

By using integration by parts, we write
\beno
\begin{split}
\int_{\R^2}&\PP\dive\left(\p_X\mu(\r)\cM(u)\right) | D_t u_X\,dx=-\int_{\R^2}\p_X\mu(\r)\cM(u) : \na\PP D_t u_X\,dx\\
=&-\int_{\R^2}\p_X\mu(\r)\cM(u): \bigl(\PP(\na u\cdot\na u_X)+ [\PP;u\cdot\na]\na u_X+D_t\PP\na  u_X\bigr)\,dx,
\end{split}
\eeno
and
\beno
\begin{split}
\int_{\R^2}\p_X\mu(\r)\cM(u) : D_t\PP\na  u_X\,dx=&\f{d}{dt}\int_{\R^2}\p_X\mu(\r)\cM(u) : \PP\na  u_X\,dx\\
&-\int_{\R^2}\p_X\mu(\r)D_t\cM(u) : \na  u_X\,dx,
\end{split}
\eeno
This leads to
\beq\label{S4eq7}
\int_{\R^2}\PP\dive\left(\p_X\mu(\r)\cM(u)\right) | D_t u_X\,dx=-\f{d}{dt}\int_{\R^2}\p_X\mu(\r)\cM(u) :\PP \na  u_X\,dx+r_2,
\eeq
with $r_2$ satisfying
\beno
|r_2|\leq C\bigl(\|\na u\|_{L^4}^2+\|\na u\|_{L^2}\|\na u\|_{L^\infty}+\|\na D_tu\|_{L^2}\bigr)\|\na u_X\|_{L^2}.
\eeno

\no$\bullet$ \underline{The estimate of $\bigl(\PP \bigl(\p_i\left(\mu(\r)\bigl(\p_iX\cdot\na u+\na X\cdot\na u^i\bigr)\right)\bigr) | D_t u_X\bigr)$}

Again by using integration by parts, we write
\beno
\begin{split}
\int_{\R^2}&\PP\p_i\left(\mu(\r)\bigl(\p_iX\cdot\na u+\na X\cdot\na u^i\bigr)\right)
| D_t u_X\,dx\\
=&-\int_{\R^2}\mu(\r)\bigl(\p_iX\cdot\na u+\na X\cdot\na u^i\bigr)
| \PP\p_iD_t u_X\,dx\\
=&-\int_{\R^2}\mu(\r)\bigl(\p_iX\cdot\na u+\na X\cdot\na u^i\bigr)
| \bigl(\PP(\p_iu\cdot\na u_X)+[\PP;u\cdot\na]\p_iu_X+D_t\p_i \PP u_X\bigr)\,dx,
\end{split}
\eeno
and
\beno
\begin{split}
\int_{\R^2}&\mu(\r)\left(\p_iX\cdot\na u+\na X\cdot\na u^i\right)
| D_t\p_i\PP u_X\,dx\\
=&\f{d}{dt}\int_{\R^2}\mu(\r)\left(\p_iX\cdot\na u+\na X\cdot\na u^i\right)
| \p_i \PP u_X\,dx\\
&-\int_{\R^2}\mu(\r)\bigl(D_t\p_iX\cdot\na u+D_t\na X\cdot\na u^i\bigr)
| \p_i\PP u_X\,dx\\
&-\int_{\R^2}\mu(\r)\bigl(\p_iX\cdot D_t\na u+\na X\cdot D_t \na u^i\bigr)
| \p_i\PP u_X\,dx.
\end{split}
\eeno
Note from \eqref{Xtrans} that $D_t\p_iX=-\p_iu\cdot\na X+\p_iu_X$ and $ D_t\na u=-\na u\cdot\na u+\na D_tu ,$ we deduce that
\beq \label{S4eq8}
\begin{split}
\int_{\R^2}&\PP\p_i\left(\mu(\r)\bigl(\p_iX\cdot\na u+\na X\cdot\na u^i\bigr)\right)
| D_t u_X\,dx\\
=&-\f{d}{dt}\int_{\R^2}\mu(\r)\left(\p_iX\cdot\na u+\na X\cdot\na u^i\right)
| \p_i\PP u_X\,dx+r_3, \end{split}
\eeq
with $r_3$ satisfying
\beno
\begin{split}
|r_3|\leq C\Bigl(&\bigl(\|\na X\|_{L^p}(\|\na u\|_{L^{\f{4p}{p-2}}}^2+\|\na u\|_{L^{\f{2p}{p-2}}}\|\na u\|_{L^\infty})\\
&+\|\na u\|_{L^\infty} \|\na u_X\|_{L^2}\bigr)\|\na u_X\|_{L^2}+\|\na X\|_{L^p}\|\na D_tu\|_{L^2}\|\na u_X\|_{L^{\f{2p}{p-2}}}\Bigr).
\end{split}
\eeno

\no$\bullet$ \underline{The estimate of $\bigl(\PP^j \bigl(\p_iX\cdot\na\left(\mu(\r)\cM_{ij}(u)\right)\bigr) | D_t u_X^j \bigr)$}

Along the same line to the proof of \eqref{S4eq8}, we write
\beno
\begin{split}
&\int_{\R^2}\PP^j \bigl(\p_iX\cdot\na\left(\mu(\r)\cM_{ij}(u)\right)\bigr) | D_t u_X^j\,dx\\
&=-\int_{\R^2}\mu(\r)\p_iX^k\cM_{ij}(u) |\bigl(\PP^j(\p_ku\cdot\na u_X^j)+[\PP^j;u\cdot\na]\p_ku^j_X+ D_t\PP^j \p_ku_X^j\bigr)\,dx,
\end{split}
\eeno
and
\beno
\begin{split}
\int_{\R^2}&\mu(\r)\p_iX^k\cM_{ij}(u) |  D_t\PP^j \p_ku_X^j\,dx=\f{d}{dt}\int_{\R^2}\mu(\r)\p_iX^k\cM_{ij}(u) |  \PP^j \p_ku_X^j\,dx\\
&-\int_{\R^2}\mu(\r) D_t\p_iX^k\cM_{ij}(u) |  \PP^j \p_ku_X^j\,dx-\int_{\R^2}\mu(\r) \p_iX^kD_t\cM_{ij}(u) |  \PP^j \p_ku_X^j\,dx.
\end{split}
\eeno
Hence we obtain
\beq\label{S4eq8a} \begin{split}
\int_{\R^2}\PP^j \bigl(\p_iX\cdot\na&\left(\mu(\r)\cM_{ij}(u)\right)\bigr) | D_t u_X^j\,dx\\
&=-\f{d}{dt}\int_{\R^2}\mu(\r)\p_iX^k\cM_{ij}(u) |  \PP^j \p_ku_X^j\,dx
+r_4,
\end{split}
\eeq
with $r_4$ shares the same estimate as $r_3.$

Finally it is trivial to note that
\beno
\bigl|\int_{\R^2}\PP(\r_XD_tu) | D_t u_X\,dx\bigr|\leq \|\r_X\|_{L^\infty}\|D_tu\|_{L^2}\|D_tu_X\|_{L^2}.
\eeno
Inserting the above estimate and (\ref{S4eq6}-\ref{S4eq8a}) into \eqref{S4eq10} leads to \eqref{S4eq0}. This completes the proof of the lemma.
\end{proof}

\begin{lem}\label{S4lem2}
{\sl Let $p\in \bigl]4,2\left(1+1/\d\right)\bigr[.$ Then under the assumptions of Lemma \ref{energyprop} and \eqref{S1eq2},  for any $t<T^\ast,$ we have
\beq\label{S4eq3}
\begin{split}
\|\na u_X\|_{L^{\f{2p}{p-2}}}\leq &  C\Bigl(\|\na X\|_{L^p}\bigl(\|\na u\|_{L^{\f{2p}{p-4}}}+\|D_tu\|_{L^{\f{2p}{p-4}}}\bigr)\\
&+\|\na u\|_{L^{\f{2p}{p-2}}}+\bigl(\|D_tu\|_{L^2}+\|D_tu_X\|_{L^2}\bigr)^{\f2p}\\
&\times \bigl(\|\na u\|_{L^2}+\|\na u_X\|_{L^2}+\|\na X\|_{L^p}\|\na u\|_{L^{\f{2p}{p-2}}}+\|X\|_{L^\infty}\|D_t u\|_{L^2}\bigr)^{1-\f2p}\Bigr).
\end{split}
\eeq}
\end{lem}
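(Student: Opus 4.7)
The plan is to apply the $L^p$-estimate \eqref{S2eq1} of Lemma \ref{S2lem1} to the $u_X$-system \eqref{XLINS}, taking $v=u_X$, $F=G$, divergence source $\hbar=\mathrm{Tr}(\na X\na u)=\dive u_X$, and replacing the exponent $p$ in that lemma by $q\eqdef 2p/(p-2)$. Since $p\in\bigl]4,2+2/\d\bigr[$ gives $q\in(4,\infty)$ and the exponent pair $(2/q,1-2/q)=(1-2/p,2/p)$ matches exactly the interpolation exponents in \eqref{S4eq3}, this yields
\beno
\|\na u_X\|_{L^q}\leq C\bigl(\|\mathrm{Tr}(\na X\na u)\|_{L^q}+\|\na u_X\|_{L^2}^{1-2/p}\|\r D_tu_X-G\|_{L^2}^{2/p}\bigr).
\eeno
The Hölder pairing $1/p+(p-4)/(2p)=1/q$ produces $\|\mathrm{Tr}(\na X\na u)\|_{L^q}\leq\|\na X\|_{L^p}\|\na u\|_{L^{2p/(p-4)}}$, matching the first summand in \eqref{S4eq3}. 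The remaining task is to control $\|\r D_tu_X-G\|_{L^2}$ by a sum that can be reorganised into the right-hand side of \eqref{S4eq3}.

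To bound $\|\r D_tu_X-G\|_{L^2}$ I will expand $G$ via \eqref{S4eq365}. The algebraic pieces give $\|\r D_tu_X\|_{L^2}\lesssim\|D_tu_X\|_{L^2}$ and $\|\r_XD_tu\|_{L^2}\lesssim\|D_tu\|_{L^2}$, using that $\|\r_X(t)\|_{L^\infty}$ is preserved under the flow (the analogue of \eqref{S9eq20} for $\r$ in place of $\mu(\r)$). For the pressure contribution $\p_jX^k\p_k\Pi$, the decomposition \eqref{b.4a} combined with $\dive u=0$ (so that $\Pi_2=-(-\D)^{-1}\dive\dive(2(\mu(\r)-1)\cM(u))$) gives by Calderón--Zygmund, valid on any $L^r$, the bounds $\|\na\Pi_1\|_{L^r}\lesssim\|D_tu\|_{L^r}$ and $\|\na\Pi_2\|_{L^r}\lesssim\ep_0\|\na u\|_{L^r}$. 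Then Hölder $L^2=L^p\cdot L^{2p/(p-2)}$ produces
\beno
\|\p_jX^k\p_k\Pi\|_{L^2}\lesssim\|\na X\|_{L^p}\bigl(\|D_tu\|_{L^{2p/(p-2)}}+\ep_0\|\na u\|_{L^{2p/(p-2)}}\bigr).
\eeno

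The three remaining pieces of $G$ (those containing $\p_X\mu(\r)$, $\mu(\r)\p_iX\cdot\na u$, and $\p_iX\cdot\na(\mu(\r)\cM(u))$) are in divergence form and contain second derivatives of $u$ or derivatives of $\mu(\r)$ that are not controlled in $L^2$ under the patch-type assumptions. I will not estimate them in $L^2$ directly; instead, using $\dive X=0$ to rewrite $\p_iX^k\p_k(\cdot)=\p_k(\p_iX^k(\cdot))$, each piece becomes an exact divergence, and when carried through the representation \eqref{S4eq3p} it passes through $\na(-\D)^{-1}\PP\dive$ as an order-zero Calderón--Zygmund operator bounded on $L^q$. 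This yields direct $L^q$-contributions $\|\p_X\mu(\r)\|_{L^\infty}\|\na u\|_{L^q}$ (the second summand $\|\na u\|_{L^{2p/(p-2)}}$ of \eqref{S4eq3}, since $\|\p_X\mu(\r(t))\|_{L^\infty}=\|\p_{X_0}\mu(\r_0)\|_{L^\infty}\leq C_{\mu,X}$) and $\|\na X\|_{L^p}\|\na u\|_{L^{2p/(p-4)}}$ (absorbed into the first summand), while $\|\mu(\r)-1\|_{L^\infty}\|\na u_X\|_{L^q}$ is absorbed on the left under \eqref{S1eq2}. Finally, Gagliardo--Nirenberg $\|D_tu\|_{L^{2p/(p-2)}}\lesssim\|D_tu\|_{L^2}^{(p-2)/p}\|\na D_tu\|_{L^2}^{2/p}$ and Young's inequality consolidate the remaining pieces into the $B^{2/p}A^{1-2/p}$ interpolation structure of \eqref{S4eq3}, with $B=\|D_tu\|_{L^2}+\|D_tu_X\|_{L^2}$ and $A$ gathering $\|\na u\|_{L^2}$, $\|\na u_X\|_{L^2}$, $\|\na X\|_{L^p}\|\na u\|_{L^{2p/(p-2)}}$, and $\|X\|_{L^\infty}\|D_tu\|_{L^2}$. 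The main technical obstacle will be the parallel handling of the algebraic/pressure pieces (via the $L^2$-interpolation term in \eqref{S2eq1}) and the divergence-form pieces (via the Calderón--Zygmund gain in \eqref{S4eq3p}), with careful bookkeeping so that the final bound takes precisely the form \eqref{S4eq3}.
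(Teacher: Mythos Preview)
Your overall architecture---work from the representation \eqref{S4eq3p}, push the divergence-form pieces of $G$ through $\na(-\D)^{-1}\PP\dive$ as Calder\'on--Zygmund contributions in $L^{2p/(p-2)}$, and interpolate the remaining part---is exactly the paper's. But there is one concrete error and one structural mismatch.

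\textbf{The $\Pi_2$ estimate is false.} You claim $\|\na\Pi_2\|_{L^r}\lesssim\ep_0\|\na u\|_{L^r}$. Since $\Pi_2=-(-\D)^{-1}\dive\dive\bigl(2(\mu(\rho)-1)\cM(u)\bigr)$, the operator $(-\D)^{-1}\dive\dive$ is order zero and gives $\|\Pi_2\|_{L^r}\lesssim\ep_0\|\na u\|_{L^r}$, but taking one more gradient forces a derivative onto $\mu(\rho)$, which has no regularity in the patch setting. So $\na\Pi_2$ is \emph{not} in any $L^r$, and your H\"older step $\|\na X^k\p_k\Pi_2\|_{L^2}\le\|\na X\|_{L^p}\|\na\Pi_2\|_{L^{2p/(p-2)}}$ fails. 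The paper's remedy is precisely the trick you already use elsewhere: since $\p_k(\p_jX^k)=\p_j(\dive X)=0$, one has $\na X^k\p_k\Pi_2=\p_k(\na X^k\Pi_2)$, so this term is also in divergence form and, through $\na(-\D)^{-1}\PP\p_k$, contributes $\|\na X\|_{L^p}\|\Pi_2\|_{L^{2p/(p-4)}}\lesssim\|\na X\|_{L^p}\|\na u\|_{L^{2p/(p-4)}}$ directly in $L^{2p/(p-2)}$. The $\Pi_1$ part is handled the same way and produces the $\|\na X\|_{L^p}\|D_tu\|_{L^{2p/(p-4)}}$ term in \eqref{S4eq3}; in particular the pressure is \emph{not} placed in the $L^2$ interpolation bucket at all.

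\textbf{The interpolation does not come from \eqref{S2eq1}.} Invoking \eqref{S2eq1} literally gives $\|\na u_X\|_{L^2}^{1-2/p}\|\rho D_tu_X-G\|_{L^2}^{2/p}$, which cannot produce the $A$-factor of \eqref{S4eq3} (which contains $\|\na u\|_{L^2}$, $\|\na X\|_{L^p}\|\na u\|_{L^{2p/(p-2)}}$, $\|X\|_{L^\infty}\|D_tu\|_{L^2}$ in addition to $\|\na u_X\|_{L^2}$). Your proposed fix via Gagliardo--Nirenberg on $\|D_tu\|_{L^{2p/(p-2)}}$ would introduce $\|\na D_tu\|_{L^2}$, which is absent from the lemma. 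What the paper actually interpolates is the single term $\na(-\D)^{-1}\PP\,\p_X(\rho D_tu)$: Gagliardo--Nirenberg in $2$D gives
\[
\bigl\|\na(-\D)^{-1}\PP\,\p_X(\rho D_tu)\bigr\|_{L^{2p/(p-2)}}
\lesssim
\bigl\|\na(-\D)^{-1}\PP\,\p_X(\rho D_tu)\bigr\|_{L^2}^{1-2/p}\,\bigl\|\p_X(\rho D_tu)\bigr\|_{L^2}^{2/p}.
\]
The second factor is $B=\|D_tu\|_{L^2}+\|D_tu_X\|_{L^2}$ since $\p_X(\rho D_tu)=\rho_X D_tu+\rho D_tu_X$. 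For the first factor one rewrites, using the momentum equation and $\PP\na=0$,
\[
\PP\bigl(\p_X(\rho D_tu)-\na X^k\p_k\Pi\bigr)=\PP\,\p_X\dive\bigl(2\mu(\rho)\cM(u)\bigr),
\]
and the right-hand side, after commuting $\p_X$ with $\dive$, is a sum of divergence-form pieces whose $\na(-\D)^{-1}\PP$-image in $L^2$ is bounded by $\|\na u\|_{L^2}+\|\na u_X\|_{L^2}+\|\na X\|_{L^p}\|\na u\|_{L^{2p/(p-2)}}$; the separate $L^2$ bound on $\na(-\D)^{-1}\PP(\na X^k\p_k\Pi)$ contributes $\|X\|_{L^\infty}\|D_tu\|_{L^2}+\|\na X\|_{L^p}\|\na u\|_{L^{2p/(p-2)}}$. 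This is exactly the $A$ in \eqref{S4eq3}. Once you route the pressure through the divergence-form channel and interpolate this specific term rather than invoking \eqref{S2eq1} wholesale, your plan becomes the paper's proof.
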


\begin{proof} We first get, by a similar derivation of \eqref{S9eq12}, that
\beq\label{S4eq6qw}
\begin{split}
\|\na u_X\|_{L^{\f{2p}{p-2}}}\leq & C\Bigl(\|\na \hbar\|_{L^{\f{2p}{p-2}}}+\|\na u\|_{L^{\f{2p}{p-2}}}+\|\na u\|_{L^{\f{2p}{p-4}}}\|\na X\|_{L^{p}}\\
&\quad +\bigl\|\na(-\D)^{-1}\PP\bigl(\p_X(\r D_tu)-\na X^k\p_k\Pi\bigr)\bigr\|_{L^{\f{2p}{p-2}}}\Bigr).
\end{split}
\eeq
Recall that $\hbar\eqdefa -\na(-\D)^{-1}\mbox{Tr}(\na X\na u),$ one has
\beno
\|\na \hbar\|_{L^{\f{2p}{p-2}}}\leq C\|\na X\|_{L^p}\|\na u\|_{L^{\f{2p}{p-4}}}.
\eeno
Whereas note from the momentum equation of \eqref{INS} that
\beno
\begin{split}
{\Bbb P}\bigl(&\p_X(\r D_tu)-\na X^k\p_k\Pi\bigr)={\Bbb P}\p_X\dive(\mu(\r)\cM(u))\\
&={\Bbb P}\p_k[X^k;\p_i](\mu(\r)\cM_{ij}(u))+{\Bbb P}\dive\left(\p_X\mu(\r)\cM(u)+\mu(\r)\p_X\cM(u)\right),
\end{split}
\eeno
so that
\beno
\bigl\|\na(-\D)^{-1}{\Bbb P}\bigl(\p_X(\r D_tu)-\na X^k\p_k\Pi\bigr)\bigr\|_{L^2}\leq C\bigl(\|\na u\|_{L^{\f{2p}{p-2}}}\|\na X\|_{L^p}+\|\na u\|_{L^2}+\|\na u_X\|_{L^2}\bigr).
\eeno
While thanks to \eqref{b.4a}, we have
\beno
\begin{split}
\|\na(-\D)^{-1}{\Bbb P}(\na X^k\p_k\Pi)\|_{L^2}
\leq &\bigl\|\na(-\D)^{-1}{\Bbb P}\p_k\bigl( X^k\na(-\D)^{-1}\dive(\r D_tu)\bigr)\bigr\|_{L^2}\\
&+\bigl\|\na(-\D)^{-1}{\Bbb P}\p_k\bigl(\na X^k(-\D)^{-1}\dive\dive(2\mu(\r)\cM(u))\bigr)\bigr\|_{L^2}\\
\leq& C\bigl(\|X\|_{L^\infty}\|D_tu\|_{L^2}+\|\na X\|_{L^p}\|\na u\|_{L^{\f{2p}{p-2}}}\bigr).
\end{split}
\eeno
As a result, it comes out
\beq \label{S4eq3pq}
\begin{split}
\|\na(-\D)^{-1}{\Bbb P}\p_X(\r D_tu)\|_{L^2}\leq C\bigl(&\|\na u\|_{L^2}+\|\na u_X\|_{L^2}\\
&+\|\na X\|_{L^p}\|\na u\|_{L^{\f{2p}{p-2}}}+\|X\|_{L^\infty}\|D_tu\|_{L^2}\bigr).
\end{split}
\eeq
Then we get, by using 2-D interpolation inequality,  that
\beno
\begin{split}
\|\na(-\D)^{-1}{\Bbb P}\p_X(\r D_tu)\|_{L^{\f{2p}{p-2}}}\leq & C\|\na(-\D)^{-1}{\Bbb P}\p_X(\r D_tu)\|_{L^2}^{1-\f2p}\|\p_X(\r D_tu)\|_{L^2}^{\f2p}\\
\leq &C\bigl(\|D_tu\|_{L^2}+\|D_tu_X\|_{L^2}\bigr)^{\f2p}\bigl(\|\na u\|_{L^2}+\|\na u_X\|_{L^2}\\
&\qquad+\|\na X\|_{L^p}\|\na u\|_{L^{\f{2p}{p-2}}}+\|X\|_{L^\infty}\|D_tu\|_{L^2}\bigr)^{1-\f2p}.
\end{split}
\eeno
It remains to handle the estimate of $\bigl\|\na(-\D)^{-1}\PP(\na X^k\p_k\Pi)\bigr\|_{L^{\f{2p}{p-2}}}.$ Indeed it is easy to observe that
\beno
\begin{split}
\|\na&(-\D)^{-1}{\Bbb P}\na X^k\p_k(-\D)^{-1}\dive(\r D_tu)\|_{L^{\f{2p}{p-2}}}
\leq C\|\na X\|_{L^p}\|D_tu\|_{L^{\f{2p}{p-4}}}.
\end{split}
\eeno
Along the same line, we have
\beno
\begin{split}
\bigl\|\na(-\D)^{-1}\PP\p_k\bigl(\na X_k (-\D)^{-1}\dive\dive(2\mu(\r)\cM(u))\bigr)\bigr\|_{L^{\f{2p}{p-2}}}
\leq C\|\na X\|_{L^p}\|\na u\|_{L^{\f{2p}{p-4}}},
\end{split}
\eeno
so thanks to \eqref{b.4a}, we obtain
\beno
\bigl\|\na(-\D)^{-1}\PP(\na X^k\p_k\Pi)\bigr\|_{L^{\f{2p}{p-2}}}\leq C\|\na X\|_{L^p}\bigl(\|D_tu\|_{L^{\f{2p}{p-4}}}+\|\na u\|_{L^{\f{2p}{p-4}}}\bigr).
\eeno
By substituting the above inequalities into \eqref{S4eq6qw} leads to the Estimate \eqref{S4eq3}.
\end{proof}

We are now in a position to present the proof of Proposition \ref{S4prop1}.

\begin{proof}[Proof of Proposition \ref{S4prop1}] We get, by first multiplying \eqref{S4eq0} by $(t-t_0)$ and then integrating the resulting inequality over $[0,t],$
that
\beq\label{S4eq14}
\begin{split}
(t-t_0)\|\na& u_X(t)\|_{L^2}^2+\int_{t_0}^t(t-t_0)\int_{\R^2}\r|D_tu_X|^2\,dx\,dt'\leq C\Bigl(\int_{t_0}^t\|\na u_X(t')\|_{L^2}^2\,dt'\\
&+(t-t_0)\bigl(\|\na u\|_{L^{\f{2p}{p-2}}}\|\na X\|_{L^p}+\|\na u\|_{L^2}\bigr)\|\na u_X\|_{L^2}
+\int_{t_0}^t(t-t_0){\rm r}(t')\,dt'\Bigr).
\end{split}
\eeq
However, by virtue of Lemmas \ref{S4lem1} and \ref{S4lem2}, we write
\beq \label{S4eq16}
\begin{split}
{\rm r}\leq & \f14\|\sqrt{\r}D_tu_X\|_{L^2}^2+C\Bigl(\|D_tu\|_{L^2}^2+\bigl(\|\na u\|_{L^2}^2+\|\na u\|_{L^{\f{2p}{p-2}}}^2\bigr)\|\na u\|_{L^\infty}\\
&+\|\na u\|_{L^p}\|D_tu\|_{L^2}^{\f2p}\|\na u_X\|_{L^2}^{2\left(1-\f1p\right)}+f_1\|\na u_X\|_{L^2}^2+f_2\|\na u_X\|_{L^2}^{\f{p}{p-1}}\\
&+f_3\|\na u_X\|_{L^2}+f_4\|\na u_X\|_{L^2}^{\f{p-2}{p-1}}+f_5\|\na u_X\|_{L^2}^{1-\f{2}{p}}+g_3\|\na X\|_{L^p}^{\f{p}{p-1}}\\
&+g_1\bigl(\|\na X\|_{L^p}^2+\|X
\|_{L^\infty}^2\bigr)+g_2 \bigl(\|\na X\|_{L^p}^{2\left(1-\f1p\right)}+\|X
\|_{L^\infty}^{2\left(1-\f1p\right)}\bigr)+g_4\|\na X\|_{L^p}\Bigr),
\end{split}
\eeq
where
\beno
\begin{split}
f_1\eqdefa&\|\na u\|_{L^p}^{\f{p}{p-1}}+\|u\|_{L^\infty}^2+\|\na u\|_{L^\infty},\\
f_2\eqdefa &\|\na u\|_{L^p}^{\f{p}{p-1}}\bigl(\|\na u\|_{L^2}+\|\na u\|_{L^{\f{2p}{p-2}}}\|\na X\|_{L^p}
+\|D_t u\|_{L^{2}}\|X\|_{L^\infty}\bigr)^{\f{p-2}{p-1}},\\
f_3\eqdefa&\|\na D_tu\|_{L^2}+\|\na u\|_{L^4}^2+\|\na u\|_{L^p}\|\na u\|_{L^{\f{2p}{p-2}}}\\
&+\bigl(\|\na u\|_{L^{\f{4p}{p-2}}}^2
+\bigl(\|\na u\|_{L^{\f{2p}{p-4}}}+\|D_t u\|_{L^{\f{2p}{p-4}}}\bigr)\|\na u\|_{L^p}\bigr)\|\na X\|_{L^p}\\
&+\|\na u\|_{L^p}\|D_tu\|_{L^2}^{\f2p}\bigl(\|\na u\|_{L^2}+\|\na u\|_{L^{\f{2p}{p-2}}}\|\na X\|_{L^p}
+\|D_tu\|_{L^2}\|X\|_{L^\infty}\bigr)^{1-\f2p},\\
f_4\eqdefa&\bigl(\|\na D_t u\|_{L^{2}}+\|\na u\|_{L^4}^{2}\bigr)^{\f{p}{p-1}}\|\na X\|_{L^p}^{\f{p}{p-1}},\\
f_5\eqdefa&\bigl(\|\na D_t u\|_{L^{2}}+\|\na u\|_{L^4}^{2}\bigr)\|D_t u\|_{L^{2}}^{\f2p}\|\na X\|_{L^p},
\end{split}
\eeno
and
\beno
\begin{split}
g_1\eqdefa&\bigl(\|\na D_tu\|_{L^2}+\|\na u\|_{L^4}^2\bigr)\bigl(\|\na u\|_{L^{\f{2p}{p-2}}}+\|D_t u\|_{L^{\f{2p}{p-4}}}\bigr)\\
&+\bigl(\|\na D_t u\|_{L^{2}}+\|\na u\|_{L^4}^{2}\bigr)^{\f{p}{p-1}}
\bigl(\|\na u\|_{L^{\f{2p}{p-2}}}+\|D_t u\|_{L^2}\bigr)^{\f{p-2}{p-1}}+\|D_tu\|_{L^{\f{2p}{p-2}}}^2,\\
g_2\eqdefa&\bigl(\|\na D_tu\|_{L^2}+\|\na u\|_{L^4}^2\bigr)\|D_tu\|_{L^2}^{\f2p}\bigl(\|D_tu\|_{L^2}+\|\na u\|_{L^{\f{2p}{p-2}}}\bigr)^{1-\f2p},\\
g_3\eqdefa&\bigl(\|\na D_t u\|_{L^{2}}+\|\na u\|_{L^4}^{2}\bigr)^{\f{p}{p-1}}\|\na u\|_{L^2}^{\f{p-2}{p-1}},\\
g_4\eqdefa&\bigl(\|\na D_tu\|_{L^2}+\|\na u\|_{L^4}^2\bigr)\bigl(\|\na u\|_{L^{\f{2p}{p-2}}}+\|\na u\|_{L^2}^{1-\f2p}\|D_tu\|_{L^2}^{\f2p}\bigr).
\end{split}
\eeno
In what follows, we shall take $t_0=\f{t}2$ in \eqref{S4eq14}. Next let us handle the right-hand side of \eqref{S4eq14}.

By virtue of  \eqref{S4eq15}, we find
\beno
\begin{split}
Ct\bigl(&\|\na u\|_{L^{\f{2p}{p-2}}}\|\na X\|_{L^p}+\|\na u\|_{L^2}\bigr)\|\na u_X\|_{L^2}\\
\leq& \f{t}4\|\na u_X\|_{L^2}^2+Ct\bigl(\|\na u\|_{L^{\f{2p}{p-2}}}^2\|\na X\|_{L^p}^2+\|\na u\|_{L^2}^2\bigr)\\
\leq& \f{t}4\|\na u_X\|_{L^2}^2+C_1\w{t}^{-2\d_-}\bigl(1+\|\na X\|_{L^\infty_t(L^p)}^2\bigr).
\end{split}
\eeno
Whereas by integrating \eqref{S3eq13} over $\left[t/2, t\right]$ and using Propositions \ref{S2prop2} and \ref{S3col1}, we arrive at
\beq\label{S4eq17}
\begin{split}
C\int_{\f{t}2}^t\|\na u_X\|_{L^2}^2\,dt'\leq &\|\cY(t/2)\|_{L^2}^2+C_1\w{t}^{-2\d_-}\bigl(1+\|X\|_{L^\infty_t(L^\infty)}^2
+\|\na X\|_{L^\infty_t(L^p)}^2\bigr){\rm V}(t)\\
\leq &C_1\w{t}^{-2\d_-}\bigl(1+\|X\|_{L^\infty_t(L^\infty)}^2
+\|\na X\|_{L^\infty_t(L^p)}^2\bigr){\rm V}(t).
\end{split}
\eeq
It remains to handle the last term in \eqref{S4eq14}, which we shall deal with term by term below.
By applying Proposition  \ref{S2prop2}, \eqref{S4eq15} and \eqref{S4eq17}, we obtain
\beno
\begin{split}
{\Large \bullet}&\int_{\f{t}2}^t\left(t'-{t}/2\right)\|\na u\|_{L^p}\|D_tu\|_{L^2}^{\f2p}\|\na u_X\|_{L^2}^{2\left(1-\f1p\right)}\,dt'\\
&\leq Ct\Bigl(\int_{\f{t}2}^t\|\na u_X(t')\|_{L^2}^2\,dt'\Bigr)^{1-\f1p}\Bigl(\int_{\f{t}2}^t\|\na u(t')\|_{L^p}^p\|D_tu(t')\|_{L^2}^{2}\,dt'\Bigr)^{\f1p}\\
&\leq C_1\w{t}^{-2\d_-}\bigl(1+\|X\|_{L^\infty_t(L^\infty)}^2
+\|\na X\|_{L^\infty_t(L^p)}^2\bigr){\rm V}(t).
\end{split}
\eeno
Similarly, we find
\beno
\begin{split}
{\Large \bullet}&\int_{\f{t}2}^t\left(t'-{t}/2\right)f_2(t')\|\na u_X(t')\|_{L^2}^{\f{p}{p-1}}\,dt'\\
&\leq Ct\Bigl(\int_{\f{t}2}^t\|\na u_X(t')\|_{L^2}^2\,dt'\Bigr)^{\f{p}{2(p-1)}}\Bigl(\int_{\f{t}2}^tf_2(t')^{\f{2(p-1)}{p-2}}\,dt'\Bigr)^{\f{p-2}{2(p-1)}}\\
&\leq C_1\w{t}^{-2\d_-}\bigl(1+\|X\|_{L^\infty_t(L^\infty)}^2
+\|\na X\|_{L^\infty_t(L^p)}^2\bigr){\rm V}(t),
\end{split}
\eeno
and
\beno
\begin{split}
{\Large \bullet}\int_{\f{t}2}^t\left(t'-{t}/2\right)f_3(t')\|\na u_X(t')\|_{L^2}\,dt'
\leq & Ct\Bigl(\int_{\f{t}2}^t\|\na u_X(t')\|_{L^2}^2\,dt'\Bigr)^{\f12}\Bigl(\int_{\f{t}2}^tf_3(t')^2\,dt'\Bigr)^{\f12}\\
\leq & C_1\w{t}^{-2\d_-}\bigl(1+\|X\|_{L^\infty_t(L^\infty)}
+\|\na X\|_{L^\infty_t(L^p)}\bigr){\rm V}(t).
\end{split}
\eeno
Note that
\beno
\begin{split}
\int_{\f{t}2}^tf_4(t')^{\f{2(p-1)}{p}}\,dt'\leq & C\|\na X\|_{L^\infty_t(L^p)}^2\Bigl(\int_{\f{t}2}^t\bigl(\|\na D_tu\|_{L^2}^2+\|\na u\|_{L^4}^4\bigr)\,dt'\\
\leq &C_1t^{-1}\w{t}^{-2\left(\f12+\d\right)_-}\bigl(1+\bigl\|\sqrt{t}\w{t}^{\left(\f12+\d\right)_-}\na D_tu\bigr\|_{L^2_t(L^2)}^2\bigr),
\end{split}
\eeno
we deduce from Proposition \ref{S3prop1} and \eqref{S4eq17} that
\beno
\begin{split}
{\Large \bullet}\int_{\f{t}2}^t&\left(t'-{t}/2\right)f_4(t')\|\na u_X(t')\|_{L^2}^{\f{p-2}{p-1}}\,dt'\\
\leq & Ct \Bigl(\int_{\f{t}2}^tf_4(t')^{\f{2(p-1)}{p}}\,dt'\Bigr)^{\f{p}{2(p-1)}}\Bigl(\int_{\f{t}2}^t\|\na u_X(t')\|_{L^2}^{2}\,dt'\Bigr)^{\f{p-2}{2(p-1)}}\\
\leq & C_1\w{t}^{-2\d_-}\bigl(1+\|X\|_{L^\infty_t(L^\infty)}^2
+\|\na X\|_{L^\infty_t(L^p)}^2\bigr){\rm V}(t),
\end{split}
\eeno
and
\beno
\begin{split}
{\Large \bullet}\int_{\f{t}2}^t&\left(t'-{t}/2\right)f_5(t')\|\na u_X(t')\|_{L^2}^{\f{p-2}{p}}\,dt'\\
\leq &\|\na X\|_{L^\infty_t(L^p)}\Bigl(\int_{\f{t}2}^t(t'-t/2)^2\bigl(\|\na D_tu\|_{L^2}^2+\|\na u\|_{L^4}^4\bigr)\,dt'\Bigr)^{\f{p}{p+2}}\\
&\times \Bigl(\int_{\f{t}2}^t\|D_tu\|_{L^2}^2\,dt'\Bigr)^{\f{2}{p+2}}\Bigl(\int_{\f{t}2}^t\|\na u_X(t')\|_{L^2}^{2}\,dt'\Bigr)^{\f{p-2}{2p}}\\
\leq & C_1\w{t}^{-2\d_-}\bigl(1+\|X\|_{L^\infty_t(L^\infty)}^2
+\|\na X\|_{L^\infty_t(L^p)}^2\bigr){\rm V}(t).
\end{split}
\eeno
Observing that
\beno
\begin{split}
\int_{\f{t}2}^t\left(t'-{t}/2\right)\|D_tu\|_{L^{\f{2p}{p-2}}}^2\,dt'\leq& Ct
\Bigl(\int_{\f{t}2}^t\|D_tu\|_{L^{2}}^2\,dt'\Bigr)^{1-\f2p}\Bigl(\int_{\f{t}2}^t\|\na D_tu\|_{L^{2}}^2\,dt'\Bigr)^{\f2p}\\
\leq &C \w{t}^{-2\left(\f1p+\d\right)_-}\bigr\|\w{t}^{\left(\f12+\d\right)_-}D_tu\bigr\|_{L^2_t(L^2)}^{1-\f2p}
\bigr\|\sqrt{t}\w{t}^{\left(\f12+\d\right)_-}\na D_tu\bigr\|_{L^2_t(L^2)}^{\f2p}.
\end{split}
\eeno
The same  estimate holds for the other terms in $g_1(t).$ As a result, it comes out
\beno
\begin{split}
{\Large \bullet}&\int_{\f{t}2}^t\left(t'-{t}/2\right)
g_1(t')\bigl(\|\na X(t')\|_{L^p}^2+\|X
(t')\|_{L^\infty}^2\bigr)\,dt'\\
&\qquad\qquad\qquad\qquad\leq  C_1\w{t}^{-2\d_-}\bigl(1+\|X\|_{L^\infty_t(L^\infty)}^2
+\|\na X\|_{L^\infty_t(L^p)}^2\bigr).
\end{split}
\eeno
Along the same line, we deduce from Propositions \ref{S2prop2} and \ref{S3prop1} that
\beno
\begin{split}
{\Large \bullet}&\int_{\f{t}2}^t\left(t'-{t}/2\right) g_2(t') \bigl(\|\na X(t')\|_{L^p}^{2\left(1-\f1p\right)}+\|X(t')
\|_{L^\infty}^{2\left(1-\f1p\right)}\bigr)
\,dt'\\
&\leq  Ct\bigl(\|\na X\|_{L^\infty_t(L^p)}+\|X
\|_{L^\infty_t(L^\infty)}\bigr)^{2\left(1-\f1p\right)}\Bigl(\int_{\f{t}2}^t\bigl(\|\na D_tu\|_{L^2}^2+\|\na u\|_{L^4}^4\bigr)\,dt'\Bigr)^{\f12}\\
&\qquad\times\Bigl(\int_{\f{t}2}^t\|D_tu\|_{L^2}^2\,dt'\Bigr)^{\f{1}{p}}
\Bigl(\int_{\f{t}2}^t\bigl(\|D_tu\|_{L^2}^2+\|\na u\|_{L^{\f{2p}{p-2}}}^2\bigr)\,dt'\Bigr)^{\f{p-2}{2p}}
 \\
&\leq  C_1\w{t}^{-2\d_-}\bigl(\|\na X\|_{L^\infty_t(L^p)}+\|X
\|_{L^\infty_t(L^\infty)}\bigr)^{2\left(1-\f1p\right)},
\end{split}
\eeno
and
\beno
\begin{split}
{\Large \bullet}&\int_{\f{t}2}^t\left(t'-{t}/2\right)g_3(t')\|\na X(t')\|_{L^p}^{\f{p}{p-1}}\,dt'\\
&\quad\leq Ct\|\na X\|_{L^\infty_t(L^p)}^{\f{p}{p-1}}
\Bigl(\int_{\f{t}2}^t\bigl(\|\na D_tu\|_{L^2}^2+\|\na u\|_{L^4}^4\bigr)\,dt'\Bigr)^{\f{p}{2(p-1)}}\Bigl(\int_{\f{t}2}^t\|\na u(t')\|_{L^2}^2\,dt'\Bigr)^{\f{p-2}{2(p-1)}}\\
&\quad\leq  C_1\w{t}^{-\f1{p-1}-2\d_-}\|\na X\|_{L^\infty_t(L^p)}^{\f{p}{p-1}},
\end{split}
\eeno
and
\beno
\begin{split}
{\Large \bullet}&\int_{\f{t}2}^t\left(t'-{t}/2\right)g_4(t')\|\na X(t')\|_{L^p}\,dt'\leq Ct\|\na X\|_{L^\infty_t(L^p)}
\Bigl(\int_{\f{t}2}^t\bigl(\|\na D_tu\|_{L^2}^2+\|\na u\|_{L^4}^4\bigr)\,dt'\Bigr)^{\f12}\\
&\qquad\times \Bigl(\Bigl(\int_{\f{t}2}^t \|\na u\|_{L^{\f{2p}{p-2}}}^2\,dt'\Bigr)^{\f12}
+\Bigl(\int_{\f{t}2}^t\|\na u(t')\|_{L^2}^2\,dt'\Bigr)^{\f{p-2}{2p}} \Bigl(\int_{\f{t}2}^t\|D_t u(t')\|_{L^2}^2\,dt'\Bigr)^{\f{1}{p}}\Bigr)\\
&\quad\ \leq  C_1\w{t}^{-\f1p-2\d_-}\|\na X\|_{L^\infty_t(L^p)},
\end{split}
\eeno
and finally
\beno
\begin{split}
{\Large \bullet}&\int_{\f{t}2}^t\left(t'-{t}/2\right)
\Bigl(\|D_tu\|_{L^2}^2+\bigl(\|\na u\|_{L^2}^2+\|\na u\|_{L^{\f{2p}{p-2}}}^2\bigr)\|\na u\|_{L^\infty}\Bigr)\,dt'\\
&\quad \leq \w{t}^{-2\d_-}\Bigl(\bigl\|\w{t}^{\left(\f12+\d\right)_-}D_tu\|_{L^2_t(L^2)}^2+C_1\|\na u\|_{L^1_t(L^\infty)}\Bigr)\\
&\quad\leq C_1\w{t}^{-2\d_-}\bigl(1+\|\na u\|_{L^1_t(L^\infty)}\bigr).
\end{split}
\eeno
Consequently,  we get, by applying Gronwall's inequality to \eqref{S4eq14} and then inserting the above estimates to the resulting inequality, that
\beq\label{S4eq18}
\begin{split}
t\|\na u_X(t)\|_{L^2}^2\leq C_1\w{t}^{-2\d_-}\bigl(1+\|X\|_{L^\infty_t(L^\infty)}^2
+\|\na X\|_{L^\infty_t(L^p)}^2\bigr){\rm V}(t).
\end{split}
\eeq
With the above estimate, by repeating the argument following \eqref{S3eq7u}, we can prove the estimate of  $\bigl\|\sqrt{t}\w{t}^{\d_-}D_tu\bigr\|_{L^2_t(L^2)}$
in \eqref{S4eq13}.

Under the additional assumption that $\p_{X_0}u_0\in H^1(\R^2),$ we get by a similar derivation of \eqref{S4eq18} that
\beno
\|\na u_X\|_{L^\infty_t(L^2)}^2+\|D_tu\|_{L^2_t(L^2)}\leq C_1\bigl(1+\bigl\|\na\p_{X_0}u_0\bigr\|_{L^2}^2 +\|X\|_{L^\infty_t(L^\infty)}^2
+\|\na X\|_{L^\infty_t(L^p)}^2\bigr){\rm V}(t),
\eeno
which together with \eqref{S4eq13} ensures \eqref{S4eq13op}.
This completes the proof of Proposition \ref{S4prop1}.
\end{proof}

\setcounter{equation}{0}
\section{The estimate of $\|\na \p_XX\|_{L^2}$}\label{Sect9}

\begin{lem}\label{S7prop1}
{\sl Let $(\r,u, X)$ be a smooth enough solution of the coupled system of \eqref{INS} with \eqref{Xtrans}.
We assume that $\p_{X_0}\mu(\r_0)\in L^6(\R^2)\cap L^\infty(\R^2)$ and $\p_{X_0}^2\mu(\r_0)\in L^2(\R^2).$
Then under the smallness assumption \eqref{S1eq2}, we have
\beq \label{S7eq4}
\begin{split}
\|\na \p_X^2u\|_{L^2}\leq C\Bigl(&\bigl(\|\Pi_2\|_{L^\infty}+\|\na u\|_{L^\infty}\bigr)\|\na\p_XX\|_{L^2}+\|\na u\|_{L^4}\bigl(\|\na X\|_{L^4}\\
&+\|\na X\|_{L^8}^2\bigr)+\|\na u_X\|_{L^3}\bigl(\|\na X\|_{L^6}+\|\p_X\mu(\r_0)\|_{L^6}\bigr)\\
&+\|\na u\|_{L^\infty}\bigl(\|\p_X^2\mu(\r_0)\|_{L^2}+\|\na X\|_{L^4}^2\bigr)\\
&+\|X\|_{L^\infty}\bigl(\|D_tu\|_{L^2}+\|D_tu_X\|_{L^2}+\|\na X\|_{L^4}\|D_tu\|_{L^4}\bigr)\Bigr),
\end{split}
\eeq where $\Pi_2$ is given by \eqref{b.4a}.}
\end{lem}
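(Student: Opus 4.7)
The plan is to derive a Stokes-type equation for $v_{XX}\eqdef \p_X^2 u=\p_X u_X$ by applying $\p_X$ once more to \eqref{XLINS}, and then to estimate $\|\na v_{XX}\|_{L^2}$ via the elliptic decomposition underlying \eqref{S1eq1ag} together with the smallness \eqref{S1eq2}. Applying $\p_X$ to the momentum equation of \eqref{XLINS} and invoking $[\p_X,D_t]=0$ from \eqref{S3eq6a}, one arrives at
$$
\r D_t v_{XX}-\dive\bigl(2\mu(\r)\cM(v_{XX})\bigr)+\na(\p_X\Pi_X)=\widetilde G,
$$
where $\widetilde G$ gathers: the source $\p_X G$ with $G$ from \eqref{S3eq7}; the transport commutator $-\r_X D_tu_X$; the Leibniz expansion of $\p_X\bigl(2\mu(\r)\cM(u_X)\bigr)$, producing $\p_X\mu(\r)\cM(u_X)$, $\mu(\r)\,\na X\cdot\na u_X$, and (from $\p_X$ hitting $\p_X\mu(\r)$ inside $\p_XG$) $\p_X^2\mu(\r)\cM(u)$; the commutators $[\p_X,\dive]$ acting on $\mu(\r)\cM(u_X)$ and $[\p_X,\na]\Pi_X=-\na X^k\p_k\Pi_X$; and the second-order commutator $\p_j(\p_XX^k)\,\p_k\Pi$ arising from iterating $[\p_X,\p_j]=-\p_jX^k\p_k$ twice on $\na\Pi$. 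The divergence constraint becomes $\dive v_{XX}=\mbox{tr}(\na\p_XX\cdot\na u)-2\,\mbox{tr}(\na X\cdot\na X\cdot\na u)$, derived from $\dive u_X=\mbox{tr}(\na X\cdot\na u)$ by the same iteration.

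Next, following \eqref{S1eq1ag} with $v=v_{XX}$, set $w\eqdef v_{XX}+\na(-\D)^{-1}\dive v_{XX}$ so that $\dive w=0$, and express $\na w$ via the Leray-projected Stokes identity. The self-term proportional to $\|\mu(\r)-1\|_{L^\infty}\|\cM(w)\|_{L^2}$ is absorbed via \eqref{S1eq2}, which, combined with Korn's inequality, yields
$$
\|\na v_{XX}\|_{L^2}\lesssim \|\dive v_{XX}\|_{L^2}+\bigl\|\na(-\D)^{-1}\PP\widetilde G\bigr\|_{L^2}+\bigl\|\na(-\D)^{-1}\PP(\r D_tv_{XX})\bigr\|_{L^2}.
$$
The $\|\dive v_{XX}\|_{L^2}$ term directly produces the summands $\|\na u\|_{L^\infty}\|\na\p_XX\|_{L^2}$ and $\|\na u\|_{L^\infty}\|\na X\|_{L^4}^2$ in \eqref{S7eq4}. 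For $\r D_tv_{XX}$ we rewrite $\r D_tv_{XX}=\p_X(\r D_tu_X)-\r_X D_tu_X$ and use $\dive X=0$ to pull one derivative outside, exactly as in \eqref{S9eq12}, turning $\na(-\D)^{-1}\PP\p_X$ into a bounded Calder\'on-Zygmund operator; this yields the pieces $\|X\|_{L^\infty}\bigl(\|D_tu\|_{L^2}+\|D_tu_X\|_{L^2}+\|\na X\|_{L^4}\|D_tu\|_{L^4}\bigr)$ of \eqref{S7eq4}.

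The delicate step is controlling the pressure contributions hidden in $\widetilde G$. Splitting $\Pi=\Pi_1+\Pi_2$ as in \eqref{b.4a}, the key term $\p_j(\p_XX^k)\,\p_k\Pi$ is integrated by parts against $v_{XX}$: the $\Pi_1$-part is dominated by $\|\na\Pi_1\|_{L^2}\lesssim\|D_tu\|_{L^2}$, while the $\Pi_2$-part yields precisely the summand $\|\Pi_2\|_{L^\infty}\|\na\p_XX\|_{L^2}$ of \eqref{S7eq4}. The remaining pressure pieces $\na X^k\p_k\p_X\Pi$ and $\na X\cdot\na X\cdot\na\Pi$ are treated analogously via H\"older combined with \eqref{b.4a} and Proposition \ref{S9prop1}, producing the mixed-norm summand $\|\na u\|_{L^4}\bigl(\|\na X\|_{L^4}+\|\na X\|_{L^8}^2\bigr)$. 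The terms $\p_X\mu(\r)\cM(u_X)$ and $\p_X^2\mu(\r)\cM(u)$ are estimated via the transport identity $\|\p_X^\ell\mu(\r)(t)\|_{L^q}=\|\p_{X_0}^\ell\mu(\r_0)\|_{L^q}$ from \eqref{S9eq20}, contributing $\|\na u_X\|_{L^3}\|\p_X\mu(\r_0)\|_{L^6}$ and $\|\na u\|_{L^\infty}\|\p_X^2\mu(\r_0)\|_{L^2}$. The main obstacle throughout is that $\|\na\Pi_2\|_{L^2}$ is not controlled, so only $\|\Pi_2\|_{L^\infty}$ (furnished by Proposition \ref{S9prop1}) may appear; this dictates the precise integration-by-parts structure that relocates each derivative off $\Pi_2$ onto either $v_{XX}$ or $\na\p_XX$. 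Once this algebraic bookkeeping is in hand, collecting the bounds yields \eqref{S7eq4}.
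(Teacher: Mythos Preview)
Your overall strategy---derive a Stokes-type equation for $v_{XX}=\p_X^2u$ and run the elliptic estimate \eqref{S1eq1ag}---is a reasonable reorganization, but the paper proceeds differently and for a reason that matters. The paper does \emph{not} write a PDE for $v_{XX}$; instead it applies $\p_X$ directly to the representation formula \eqref{S4eq3p} for $\na u_X$, obtaining \eqref{S7eq1}. The point is that in \eqref{S7eq1} the operator $\p_X$ sits \emph{outside} the Calder\'on--Zygmund blocks $\na(-\D)^{-1}\PP\dive$, so every pressure term is handled by the commutator estimate of Proposition~\ref{Saprop1} (e.g.\ $\p_X\na(-\D)^{-1}\PP\dive(\p_jX\,\Pi_2)=[\p_X;\na(-\D)^{-1}\PP\dive](\p_jX\,\Pi_2)+\na(-\D)^{-1}\PP\dive\,\p_X(\p_jX\,\Pi_2)$). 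This is what produces the clean bound \eqref{S7eq3} with only $\|\Pi_2\|_{L^\infty}$ and $\|\Pi_2\|_{L^4}\lesssim\|\na u\|_{L^4}$, never $\na\Pi_2$.

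In your scheme, by contrast, $\p_X$ has already been pushed inside to form $\widetilde G$, so you must estimate $\|\na(-\D)^{-1}\PP\,h\|_{L^2}$ for $h_j=\p_j(\p_XX^k)\p_k\Pi_2$ and $h_j=(\na X\cdot\na X)_j^k\p_k\Pi_2$. Your phrase ``integrated by parts against $v_{XX}$'' does not fit this elliptic framework (you are not taking an inner product), and if you simply move $\p_k$ off $\Pi_2$ you pick up residual terms containing $\na^2X$ or $(\na X)^2\na\Pi_2$, neither of which is controlled. These residuals do in fact cancel between the two pressure pieces (because $\dive(\p_XX)=\mathrm{tr}(\na X)^2$ and $\dive X=0$), but you have not said so, and without either this cancellation or an explicit appeal to Proposition~\ref{Saprop1} the step is incomplete. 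A smaller oversight: your formula for $\dive v_{XX}$ is missing the term $2\,\mathrm{tr}(\na X\,\na u_X)$, which is exactly the source of the summand $\|\na u_X\|_{L^3}\|\na X\|_{L^6}$ in \eqref{S7eq4}. Similarly, your claim that the $\Pi_1$-part is dominated by $\|\na\Pi_1\|_{L^2}$ does not match your setup: in $\|\na(-\D)^{-1}\PP[\na\p_XX\cdot\na\Pi_1]\|_{L^2}$ one cannot invoke $\|\na\Pi_1\|_{L^2}$ directly; the paper instead uses $\|\p_X\na(-\D)^{-1}\PP f\|_{L^2}\le C\|X\|_{L^\infty}\|f\|_{L^2}$ with $f=\p_jX\cdot\na\Pi_1$ to produce the term $\|X\|_{L^\infty}\|\na X\|_{L^4}\|D_tu\|_{L^4}$.
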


\begin{proof}
According to \eqref{S4eq3p}, we write
\beq \label{S7eq1}
\begin{split}
\na \p^2_Xu=&[\na; \p_X]u_X+\p_X\na \hbar+\p_X\na(-\D)^{-1}\PP\dive\bigl(\mu(\rho)\cM(\hbar)\bigr)\\
&-\p_X\na(-\D)^{-1}\PP\bigl(\r D_tu_X-G)+\p_X\na(-\D)^{-1}\PP\dive\bigl((\mu(\rho)-1)\cM(u_X-\hbar)\bigr).
\end{split}
\eeq
In what follows, we shall handle the estimates of the above terms.

\no$\bullet$ \underline{The estimate of $\p_X\na \hbar$}

Recall that $\hbar\eqdefa -\na(-\D)^{-1}\mbox{Tr}(\na X\na u),$ we write
\beno
\p_X\na \hbar=-[\p_X; \na^2(-\D)^{-1}]{\rm Tr}(\bigl(\na X\na u\bigr)-\na^2(-\D)^{-1}{\rm Tr}\bigl(\p_X\na X\na u+\na X\p_X\na u\bigr),
\eeno
from which and the classical commutator's estimate, we infer
\beq \label{S7eq2}
\|\p_X\na \hbar\|_{L^2}\lesssim \bigl(\|\na X\|_{L^4}^2+\|\na\p_XX\|_{L^2}\bigr)\|\na u\|_{L^\infty}+\|\na X\|_{L^6}\|\na u_X\|_{L^3}.
\eeq

\no$\bullet$ \underline{The estimate of $\p_X\na(-\D)^{-1}\PP\dive\bigl(\mu(\rho)\cM(\hbar)\bigr)$}

We first write
\beno
\begin{split}
\p_X\na(-\D)^{-1}\PP\dive\bigl(\mu(\rho)\cM(\hbar)\bigr)=&[\p_X; \na(-\D)^{-1}\PP\dive]\bigl(\mu(\rho)\cM(\hbar)\bigr)\\
&+\na(-\D)^{-1}\PP\dive\bigl(\p_X\mu(\rho)\cM(\hbar)+\mu(\rho)\p_X\cM(\hbar)\bigr),
\end{split}
\eeno
from which and \eqref{S7eq2}, we deduce from  the classical commutator's estimate that
\beno
\begin{split}
\bigl\|\p_X\na(-\D)^{-1}\PP\dive\bigl(&\mu(\rho)\cM(\hbar)\bigr)\bigr\|_{L^2}\lesssim \|\na X\|_{L^4}\|\na\hbar\|_{L^4}
+\|\na\hbar\|_{L^2}+\|\p_X\na\hbar\|_{L^2}\\
\lesssim &\bigl(\|\na X\|_{L^2}+\|\na X\|_{L^4}^2+\|\na\p_XX\|_{L^2}\bigr)\|\na u\|_{L^\infty}+\|\na X\|_{L^6}\|\na u_X\|_{L^3}.
\end{split}
\eeno

\no$\bullet$ \underline{The estimate of $
\p_X\na(-\D)^{-1}\PP\dive\bigl((\mu(\rho)-1)\cM(u_X-\hbar)\bigr)$}

We first write
\beno
\begin{split}
\p_X&\na(-\D)^{-1}\PP\dive\bigl((\mu(\rho)-1)\cM(u_X-\hbar)\bigr)\\
=&[\p_X;\na(-\D)^{-1}\PP\dive]\bigl((\mu(\rho)-1)\cM(u_X-\hbar)\bigr)\\
&+\na(-\D)^{-1}\PP\dive\bigl((\mu(\r)-1)\p_X\cM(u_X-\hbar)+\p_X\mu(\r)\cM(u_X-\hbar)\bigr),
\end{split}
\eeno
from which, and the commutator's estimate, we infer
\beno
\begin{split}
\bigl\|&\p_X\na(-\D)^{-1}\PP\dive\bigl((\mu(\rho)-1)\cM(u_X-\hbar)\bigr)\bigr\|_{L^2}\\
&\lesssim \|\p_X\mu(\r)\|_{L^6} \|\na u_X\|_{L^3}+\|\p_X \mu(\r)\|_{L^\infty}\|\na\hbar\|_{L^2}+\|\mu(\r)-1\|_{L^\infty}\\
 &\quad \times \bigl(\|\na \hbar\|_{L^4}\|\na X\|_{L^4}+\|\na u_X\|_{L^3}\|\na X\|_{L^6}+\|\na \p_X^2u\|_{L^2}+\|\p_X\na\hbar\|_{L^2}\bigr).
\end{split}
\eeno
Then by virtue of \eqref{S9eq20} and \eqref{S7eq2}, we achieve
\beno
\begin{split}
\bigl\|&\p_X\na(-\D)^{-1}\PP\dive\bigl((\mu(\rho)-1)\cM(u_X-\hbar)\bigr)\bigr\|_{L^2}\\
&\lesssim \|\p_{X_0}\mu(\r_0)\|_{L^6}\|\na u_X\|_{L^3}+\|\na u\|_{L^\infty}\|\na X\|_{L^2}+\|\mu(\r)-1\|_{L^\infty}\\
 &\quad\times \bigl(\|\na \p_X^2u\|_{L^2}+\bigl(\|\na X\|_{L^4}^2+\|\na \p_XX\|_{L^2}\bigr)\|\na u\|_{L^\infty}+\|\na X\|_{L^6}\|\na u_X\|_{L^3}\bigr).
\end{split}
\eeno

\no$\bullet$ \underline{The estimate of $\p_X\na(-\D)^{-1}\PP\bigl(\r D_tu_X-G)$}

It is easy to observe that
\beno
\bigl\|\p_X\na(-\D)^{-1}\PP\p_X(\r D_tu)\bigr\|_{L^2}\lesssim \|X\|_{L^\infty}\bigl(\|D_tu\|_{L^2}+\|D_tu_X\|_{L^2}\bigr).
\eeno
While in view of \eqref{b.4a}, we have
\beno
\begin{split}
\bigl\|\p_X\na(-\D)^{-1}\PP(\p_jX\cdot\na\Pi_1)\|_{L^2}
\lesssim &\|X\|_{L^\infty}\|\na X\|_{L^4}\|\na\Pi_1\|_{L^4}\\
\lesssim &\|X\|_{L^\infty}\|\na X\|_{L^4}\|D_tu\|_{L^4}.
\end{split}
\eeno
To estimate $\p_X\na(-\D)^{-1}\PP(\p_jX\cdot\na\Pi_2),$ we write
\beno
\p_X\na(-\D)^{-1}\PP(\p_jX\cdot\na\Pi_2)=[\p_X;\na(-\D)^{-1}\PP\dive](\p_jX\Pi_2)+\na(-\D)^{-1}\PP\dive\p_X(\p_jX\Pi_2),
\eeno
which implies
\beq\label{S7eq3}
\bigl\|\p_X\na(-\D)^{-1}\PP(\p_jX\cdot\na\Pi_2)\bigr\|_{L^2}\lesssim \|\na X\|_{L^8}^2\|\Pi_2\|_{L^4}+\|\na\p_XX\|_{L^2}\|\Pi_2\|_{L^\infty}
+\|\p_jX\p_X\Pi_2\|_{L^2}.
\eeq
Yet note from \eqref{b.4a} that
\beno
\begin{split}
\p_X\Pi_2=&[\p_X; (-\D)^{-1}\dive\dive](2\mu(\r)\cM(u))\\
&+(-\D)^{-1}\dive\dive\bigl(2\p_X\mu(\r)\cM(u)+2\mu(\r)\p_X\cM(u)\bigr),
\end{split}
\eeno
from which, we infer
\beno
\|\p_jX\p_X\Pi_2\|_{L^2}\lesssim \bigl(\|\na X\|_{L^8}^2+\|\na X\|_{L^4}\bigr)\na u\|_{L^4}+\|\na X\|_{L^6}\|\na u_X\|_{L^3}.
\eeno
Inserting the above estimates into \eqref{S7eq3} gives rise to
\beno
\begin{split}
\bigl\|\p_X\na(-\D)^{-1}\PP(\p_jX\cdot\na\Pi_2)\bigr\|_{L^2}\lesssim &\bigl(\|\na X\|_{L^8}^2+\|\na X\|_{L^4}\bigr)\na u\|_{L^4}\\
&+\|\na X\|_{L^6}\|\na u_X\|_{L^3}
+\|\na\p_XX\|_{L^2}\|\Pi_2\|_{L^\infty}.
\end{split}
\eeno
Once again through a commutative argument, we write
$$ \longformule{
\p_X\na(-\D)^{-1}\PP\p_i\bigl(2\p_X\mu(\r)\cM_{ij}(u)\bigr)=[\p_X; \na(-\D)^{-1}\PP\p_i]\bigl(2\p_X\mu(\r)\cM_{ij}(u)\bigr)}{{}
+\na(-\D)^{-1}\PP\p_i\bigl(2\p_X^2\mu(\r)\cM_{ij}(u)+2\p_X\mu(\r)\p_X\cM_{ij}(u)\bigr),} $$
from which, we infer
\beno
\begin{split}
\bigl\|\p_X\na(-\D)^{-1}&\PP\p_i\bigl(2\p_X\mu(\r)\cM_{ij}(u)\bigr)\bigr\|_{L^2}\\
&\lesssim \|\na X\|_{L^4}\|\na u\|_{L^4}+\|\p_X^2\mu(\r)\|_{L^2}\|\na u\|_{L^\infty}
+\|\p_X\mu(\r)\|_{L^6}\|\na u_X\|_{L^3}.
\end{split}
\eeno
To deal with the remaining terms in \eqref{S4eq365}, we write
\beno
\begin{split}
\p_X\na(-\D)^{-1}&\PP\p_i\left(\mu(\r)\bigl(\p_iX\cdot\na u^j+\p_jX\cdot\na u^i\bigr)\right)\\
&=[\p_X;\na(-\D)^{-1}\PP\p_i]\left(\mu(\r)\bigl(\p_iX\cdot\na u^j+\p_jX\cdot\na u^i\bigr)\right)\\
&\quad +\na(-\D)^{-1}\PP\p_i\left(\p_X\mu(\r)\bigl(\p_iX\cdot\na u^j+\p_jX\cdot\na u^i\bigr)\right)\\
&\quad +\na(-\D)^{-1}\PP\p_i\left(\mu(\r)\p_X\bigl(\p_iX\cdot\na u^j+\p_jX\cdot\na u^i\bigr)\right),
\end{split}
\eeno
and
\beno
\begin{split}
\p_X\na(-\D)^{-1}\PP\left(\p_iX\cdot\na\bigl(2\mu(\r)\cM_{ij}(u)\bigr)\right)
=&[\p_X;\na(-\D)^{-1}\PP\dive]\left(\p_iX\bigl(2\mu(\r)\cM_{ij}(u)\bigr)\right)\\
&+\na(-\D)^{-1}\PP\dive\p_X\left(\p_iX\bigl(2\mu(\r)\cM_{ij}(u)\bigr)\right),
\end{split}
\eeno
from which, we deduce that
$$\longformule{
\bigl\|\p_X\na(-\D)^{-1}\PP\p_i\left(\mu(\r)\bigl(\p_iX\cdot\na u^j+\p_jX\cdot\na u^i\bigr)\right)\bigr\|_{L^2}}{{}
\lesssim \bigl(\|\na X\|_{L^8}^2+\|\na X\|_{L^4}\bigr)\|\na u\|_{L^4}+\|\na u\|_{L^\infty}\|\na\p_XX\|_{L^2},} $$
and
$$\longformule{
\bigl\|\p_X\na(-\D)^{-1}\PP\left(\p_iX\cdot\na\bigl(2\mu(\r)\cM_{ij}(u)\bigr)\right)\bigr\|_{L^2}}{{}
\lesssim \|\na X\|_{L^8}^2\|\na u\|_{L^4}+\|\na X\|_{L^6}\|\na u_X\|_{L^3}+\|\na u\|_{L^\infty}\|\na\p_XX\|_{L^2}.
}$$
Hence in view of \eqref{S4eq365}, by summarizing the above estimates, we achieve
\beno
\begin{split}
\bigl\|\p_X\na(-\D)^{-1}&\PP\bigl(\r D_tu_X-G)\bigr\|_{L^2}\lesssim  \bigl(\|\Pi_2\|_{L^\infty}+\|\na u\|_{L^\infty}\bigr)\|\na\p_XX\|_{L^2}\\
&+\|\na u\|_{L^4}\bigl(\|\na X\|_{L^4}+\|\na X\|_{L^8}^2\bigr)+ \|\na u\|_{L^\infty}\|\p_X^2\mu(\r)\|_{L^2}\\
&+\|\na u_X\|_{L^3}\bigl(\|\na X\|_{L^6}+\|\p_X\mu(\r)\|_{L^6}\bigr)\\
&+\|X\|_{L^\infty}\bigl(\|D_tu\|_{L^2}+\|D_tu_X\|_{L^2}+\|\na X\|_{L^4}\|D_tu\|_{L^4}\bigr).
\end{split}
\eeno

Thanks to \eqref{S7eq1} and \eqref{S9eq20}, by summarizing the above estimates and using the smallness condition \eqref{S1eq2},
we obtain \eqref{S7eq4}. This completes the proof of Lemma \ref{S7prop1}.
\end{proof}

\begin{proof}[Proof of Proposition \ref{S0prop1}] Taking $\p_X$ to \eqref{Xtrans} yields
\beno
D_t\p_XX=\p_X^2u.
\eeno
Taking $\na$ to the above equation and then taking $L^2$ inner product of the resulting equation with
$\na\p_XX,$ we find
\beq \label{S7eq6}
\f{d}{dt}\|\na\p_XX(t)\|_{L^2}\leq \|\na u\|_{L^\infty}\|\na\p_XX(t)\|_{L^2}+\|\na\p_X^2u\|_{L^2}.
\eeq
On the other hand, by taking $p=6$ in \eqref{S4eq3} and  inserting the resulting inequality to \eqref{S7eq4},
and then substitute it to \eqref{S7eq6}, we achieve
\beno
\begin{split}
\f{d}{dt}&\|\na\p_XX(t)\|_{L^2}\leq C_2\Bigl(\bigl( \|\na u\|_{L^\infty}+\|\Pi_2\|_{L^\infty}\bigr)\|\na\p_XX(t)\|_{L^2}+\|D_tu_X\|_{L^2}\|X\|_{L^\infty}\\
&+\bigl[\bigl(\|D_tu\|_{L^2}+\|D_tu_X\|_{L^2}\bigr)^{\f13}\bigl(\|\na u\|_{L^2}+\|\na u\|_{L^3}+\|D_tu\|_{L^2}+\|\na u_X\|_{L^2}\bigr)^{\f23}\\
&+\|\na u\|_{L^3}+\|\na u\|_{L^\infty}+\|D_tu\|_{L^2}+\|D_tu\|_{L^6}\bigr]\bigl(1+\|X\|_{L^\infty}^2+\|\na X\|_{L^4}^2+\|\na X\|_{L^8}^2\bigr)\Bigr).
\end{split}
\eeno
Note that for $\d\in \left]1/3,1/2\right[,$ we deduce from Propositions \ref{S3col1} and \ref{S4prop1} that
\beno
\begin{split}
\int_0^t\|D_tu_X\|_{L^2}^{\f13}\|\na u_X\|_{L^2}^{\f23}\,dt'\leq &\bigl\|\w{t}^{\left(\d+\f12\right)_-}D_tu_X\|_{L^2_t(L^2)}^{\f13}
\bigl\|\w{t}^{\d_-}\na u_X\bigr\|_{L^2_t(L^2)}^{\f23}\\
&\times\Bigl(\int_0^t\w{t'}^{-\left(2\d+\f13\right)_-}\,dt'\Bigr)^{\f12}\\
\leq &C\Bigl(\bigl\|\w{t}^{\left(\d+\f12\right)_-}D_tu_X\|_{L^2_t(L^2)}+
\bigl\|\w{t}^{\d_-}\na u_X\bigr\|_{L^2_t(L^2)}\Bigr).
\end{split}
\eeno
Therefore, \eqref{S0eq1} follows from Gronwall's inequality,  Propositions \ref{S2prop2} and \ref{S3prop1}, and Propositions \ref{S3col1} and \ref{S4prop1}.
This completes the proof of Proposition \ref{S0prop1}.
\end{proof}

\appendix
\section{The commutative estimate}\label{Sappb}

Let us first recall the following lemma from \cite{FS}.

\begin{lem} \label{Salem1}
{\sl Let $p, q\in ]1,\infty[$ or $p=q=\infty.$ Let $\{f_j\}_{j\in\Z}$ be a sequence of functions in $L^p(\R^d)$ so that
$\left(f_j(x)\right)_{\ell^q(\Z)}\in L^p(\R^d).$ Then there holds
\beno
\bigl\|\left(M(f_j)(x)\right)_{\ell^q}\bigr\|_{L^p}\leq C\bigl\|\left(f_j(x)\right)_{\ell^q}\bigr\|_{L^p}.
\eeno}
\end{lem}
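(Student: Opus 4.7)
The plan is to prove this Fefferman-Stein vector-valued maximal inequality by reducing it to Calderón-Zygmund theory applied to the sequence $(f_j(x))_{j\in\Z}$. The case $p = q = \infty$ is trivial since $M(f_j)(x) \leq \|f_j\|_{L^\infty}$ pointwise, yielding $\sup_j M(f_j)(x) \leq \bigl\|(f_j)\bigr\|_{L^\infty(\ell^\infty)}$ uniformly in $x$. The substantive case is $p, q \in (1,\infty)$.

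The first and principal step is to establish the vector-valued weak-type $(1,1)$ bound, namely that for any $\lambda > 0$,
\begin{equation*}
\Bigl|\bigl\{x \in \R^d : \bigl\|(M(f_j)(x))\bigr\|_{\ell^q} > \lambda\bigr\}\Bigr| \le \frac{C_q}{\lambda}\int_{\R^d}\bigl\|(f_j(x))\bigr\|_{\ell^q}\,dx.
\end{equation*}
This is achieved by applying the Calderón-Zygmund decomposition to the scalar majorant $F(x) \eqdefa \bigl\|(f_j(x))\bigr\|_{\ell^q}$ at height $\lambda$. This produces disjoint dyadic cubes $\{Q_k\}$ satisfying $\lambda < |Q_k|^{-1}\int_{Q_k} F \le 2^d\lambda$ and a componentwise splitting $f_j = g_j + b_j$, where $\bigl\|(g_j(x))\bigr\|_{\ell^q} \le \lambda$ a.e.\ off $E \eqdefa \cup_k Q_k$ and $b_j \eqdefa \bigl(f_j - |Q_k|^{-1}\int_{Q_k} f_j\bigr)\mathbf{1}_{Q_k}$ has vanishing mean on each $Q_k$. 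The good part is controlled via Chebyshev at level $\lambda/2$ and the scalar $L^q$-boundedness of $M$ applied componentwise, combined with Minkowski's inequality in $\ell^q$ and the interpolation bound $\int \bigl\|(g_j)\bigr\|_{\ell^q}^q \le C\lambda^{q-1}\|F\|_{L^1}$. The bad part is handled by discarding the dilated set $\cup_k Q_k^*$ (of measure $\le C\lambda^{-1}\|F\|_{L^1}$) and exploiting mean zero on each $Q_k$, the $\ell^q$-valued estimate following from testing against dual sequences in $\ell^{q'}$.

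Marcinkiewicz interpolation between this weak-type $(1,1)$ bound and the strong $(\infty,\infty)$ bound for the vector-valued sublinear operator then yields the desired strong $L^p(\R^d;\ell^q)$ inequality for every $p \in (1,\infty)$. The main obstacle will be the bad-part estimate: the cancellation argument has to be arranged so that the constants depend only on $q$ and not on any truncation of the index set $\Z$, after which monotone convergence extends the result to the full sequence. A technically cleaner alternative, which I might prefer in the actual writeup, is Stein's extrapolation via the weighted inequality $\int (Mf)^q w\,dx \le C_q\int |f|^q (Mw)\,dx$ (a dual consequence of the scalar Hardy-Littlewood theorem for $q>1$); applied with $w = F^{p-q}$ when $p \ge q$ and combined with Fubini, it delivers the $L^p(\ell^q)$ bound in one stroke, while the case $p < q$ is obtained by duality since the adjoint inequality has the same form with $(p,q)$ replaced by $(p',q')$.
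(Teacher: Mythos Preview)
The paper does not prove this lemma at all; it is stated with the preface ``Let us first recall the following lemma from \cite{FS}'' and is simply quoted from Fefferman and Stein's original 1971 paper. Your sketch is therefore not competing with any argument in the paper, and it correctly reproduces the classical proof: Calder\'on--Zygmund decomposition of the scalar majorant $F=\|(f_j)\|_{\ell^q}$ to get the vector-valued weak $(1,1)$ bound, followed by Marcinkiewicz interpolation against the trivial $L^\infty(\ell^\infty)$ endpoint. The alternative you mention via the weighted inequality $\int (Mf)^q w \le C_q\int |f|^q Mw$ is also standard and indeed closer to how Fefferman--Stein themselves argue; either route is fine, and since the paper treats this as a black box, any correct proof you supply goes strictly beyond what is required here.
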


\begin{prop}\label{Saprop1}
{\sl Let $p, r\in ]1,\infty[$ and $q\in ]1, \infty]$ satisfying $\f1r=\f1p+\f1q.$ Let  $X=(X^1,\cdots, X^d)\in \dot{W}^{1,p}(\R^d)$ with $\dive X=0,$
 $g\in L^q(\R^d),$ $R_i=\p_i(-\D)^{-\f12}$
be the Riesz transform. Then one has
\beq \label{Saeq1a}
\bigl\|[\p_X; R_iR_j]g\bigr\|_{L^r}\leq C\|\na X\|_{L^p}\|g\|_{L^q}.
\eeq}
\end{prop}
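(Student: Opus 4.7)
The plan is to follow the Bony-paraproduct strategy used in the proof of Lemma \ref{S9lem1}. Apply the decomposition $X^k\partial_k h = T_{X^k}\partial_k h + T_{\partial_k h}X^k + R(X^k,\partial_k h)$ to both $h = R_iR_j g$ (inside $\partial_X R_iR_j g$) and $h = g$ (inside $R_iR_j\partial_X g$). After subtracting and reorganizing, the "paraproduct of high by low" of $X^k$ against $\partial_k(\text{high-frequency piece})$ cancels modulo a commutator, yielding
\begin{equation*}
[\partial_X, R_iR_j]g = [T_{X^k}, R_iR_j\partial_k]g + T_{\partial_k R_iR_j g}X^k - R_iR_j\,T_{\partial_k g}X^k + R(X^k, \partial_k R_iR_j g) - R_iR_j R(X^k, \partial_k g).
\end{equation*}
It then suffices to bound each of the five terms by $C\|\nabla X\|_{L^p}\|g\|_{L^q}$ in $L^r$.

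For the remainders I exploit $\dive X = 0$: since $\partial_k X^k = 0$, we have $R(X^k,\partial_k f) = \partial_k R(X^k,f)$ for any $f$, so (after using $L^q$-boundedness of $R_iR_j$) the remainder estimate reduces to the pointwise bound
$$|\Delta_j X^k \cdot \widetilde{\Delta}_j\partial_k f|(x) \lesssim M(\widetilde{\Delta}_j\nabla X)(x)\cdot M(\widetilde{\Delta}_j f)(x),$$
obtained from Bernstein-type pointwise inequalities $|\Delta_j X(x)|\lesssim 2^{-j}M(\widetilde{\Delta}_j\nabla X)(x)$ and $|\widetilde{\Delta}_j\partial_k f(x)|\lesssim 2^j M(\widetilde{\Delta}_j f)(x)$. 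Summing in $j$ by Cauchy--Schwarz, applying H\"older in $x$ with $\tfrac1r=\tfrac1p+\tfrac1q$, Fefferman--Stein (Lemma \ref{Salem1}), and the Littlewood--Paley characterizations of $L^p$ and $L^q$ closes the estimate. The two cross paraproducts are each frequency-localized at $\sim 2^j$ per summand; combining the pointwise bound $|S_{j-1}\partial_k R_iR_j g(x)|\lesssim 2^j M(R_iR_j g)(x)$ with $|\Delta_j X(x)|\lesssim 2^{-j}M(\widetilde{\Delta}_j\nabla X)(x)$ produces the product $M(R_iR_j g)(x)\cdot M(\widetilde{\Delta}_j\nabla X)(x)$, and the same LP + Fefferman--Stein route gives the desired bound (using $L^q$-boundedness of $R_iR_j$ and of $M$).

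The main obstacle is the commutator $[T_{X^k}, R_iR_j\partial_k]g$, where $R_iR_j\partial_k$ is a singular integral of order $1$. Following the Abel-rearrangement argument of Lemma \ref{S9lem1}, I write $R_iR_j\partial_k\Delta_\ell = 2^\ell\theta(2^{-\ell}D)$ with $\theta(\xi) = -i\xi_k\xi_i\xi_j|\xi|^{-2}\varphi(\xi)$, and rewrite for $|\ell-j|\leq 4$:
$$S_{j-1}X^k\,\theta(2^{-\ell}D)\Delta_j g - \theta(2^{-\ell}D)\bigl(S_{j-1}X^k\Delta_j g\bigr) = -\bigl(\Delta_j X^k\,S_j\theta(2^{-\ell}D)g - \theta(2^{-\ell}D)(\Delta_j X^k S_j g)\bigr).$$
Expanding via the mean value theorem on the kernel of $\theta(2^{-\ell}D)$ produces an integrand of the form $\check{\theta}_\ell(z)\cdot\int_0^1\Delta_j\nabla X(x-\tau z)\cdot z\,d\tau\cdot S_j g(x-z)$, and since $|z|\cdot 2^\ell$ is absorbed by the Schwartz decay of $\check\theta$, the maximal-function estimates used for $\Psi(z)=\check\theta(z)|z|$ yield the pointwise bound
$$\bigl|\Delta_\ell[T_{X^k},R_iR_j\partial_k]g(x)\bigr| \lesssim M(\Delta_\ell\nabla X)(x)\cdot M(g)(x).$$
One then concludes via the LP square function, Fefferman--Stein, and H\"older, as in the remainder step. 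The delicate point is exactly that the extra derivative carried by $R_iR_j\partial_k$ (the $2^\ell$ coming from its order-$1$ symbol) is precisely compensated by the factor $|z|$ from the mean value theorem, so that the net object behaves at the Calder\'on--Zygmund order-$0$ level, which is what is needed for the bilinear $L^p\times L^q\to L^r$ bound.
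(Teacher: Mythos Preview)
Your overall architecture matches the paper exactly: the same Bony decomposition into five pieces, the use of $\dive X=0$ on the remainders, and the Abel-rearrangement/mean-value expansion of $[T_{X^k};R_iR_j\partial_k]g$ taken from the proof of Lemma~\ref{S9lem1}. Two points deserve attention.

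A minor one: for the paraproduct and remainder pieces you route everything through $M(R_iR_jg)$ or the Littlewood--Paley square function of $R_iR_jg$. At $q=\infty$ this fails, since Riesz transforms are unbounded on $L^\infty$. The paper avoids this by never forming $R_iR_jg$ alone: it uses the pointwise bounds $|S_{j'-1}\partial_kR_iR_jg(x)|\le C2^{j'}Mg(x)$ and $|\widetilde\Delta_{j'}R_iR_jg(x)|\le CMg(x)$, valid because $S_{j'-1}\partial_kR_iR_j$ and $\widetilde\Delta_{j'}R_iR_j$ are convolutions with fixed Schwartz profiles.

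The genuine gap is in the commutator term. After the mean-value step you face
\[
\int_{\R^d}\Psi_\ell(z)\int_0^1\bigl|\Delta_j\nabla X(x-\tau z)\bigr|\,d\tau\;\bigl|S_j\partial_k g(x-z)\bigr|\,dz,
\]
and you assert that this is $\lesssim M(\Delta_j\nabla X)(x)\cdot Mg(x)$. This is not justified: the two factors sit at \emph{different} points $x-\tau z$ and $x-z$, and convolution against a nice kernel controls $M(AB)$, not the product $MA\cdot MB$ (which is in general incomparable). The paper resolves this by a H\"older split against the measure $\Psi_\ell(z)\,dz$: choose $\alpha,\beta\in(0,1)$ with $\alpha+\beta=1$, $p\alpha>1$, $q\beta>1$ and $2\alpha>1$, and bound the integral pointwise by
\[
\bigl[M\bigl(|\Delta_j\nabla X|^{1/\alpha}\bigr)(x)\bigr]^{\alpha}\bigl[M\bigl((Mg)^{1/\beta}\bigr)(x)\bigr]^{\beta},
\]
the first factor absorbing the $\tau$-average and the second coming from $|S_j\partial_k g(x-z)|\le C2^jMg(x-z)$. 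Taking the $\ell^2_j$--$L^r_x$ norm, H\"older with $(p,q)$, the Fefferman--Stein inequality (Lemma~\ref{Salem1}) with indices $(p\alpha,2\alpha)$ on the $X$ side, and the scalar maximal inequality in $L^{q\beta}$ on the $g$ side finishes the estimate; the constraints on $\alpha,\beta$ are exactly what make these steps legitimate. This H\"older device is the missing idea in your argument.
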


\begin{proof}
We first get by, applying Bony's decomposition that
\beq\label{Saeq2a}
\begin{split}
[\p_X; R_iR_j]f=[T_{X^k}; R_iR_j]\p_kg&+T_{\p_kR_iR_jg}X^k-R_iR_j(T_{\p_kg}X^k)\\
&+R(X^k,\p_kR_iR_jg)-R_iR_j(R(X^k,\p_kg)).
\end{split}
\eeq
In view of (2.27) of \cite{BCD}, one has
\beq  \label{Saeq3a}
|\D_j(\p_kR_iR_jg)(x)|\leq C2^j {\rm M}g(x)\Longrightarrow
|S_j(\p_kR_iR_jg)(x)|\leq C2^j {\rm M}g(x),
\eeq
from which, we infer
\beno
\begin{split}
\bigl|\D_j\bigl(T_{\p_kR_iR_jg}X^k\bigr)(x)\bigr|\leq &C\sum_{|j'-j|\leq 4}{\rm M}\big(S_{j'-1}(\p_kR_iR_jg)\D_{j'}X^k\bigr)(x)\\
\leq &C\sum_{|j'-j|\leq 4}{\rm M}\big({\rm M}g 2^{j'}\D_{j'}X^k\bigr)(x).
\end{split}
\eeno
As a result, we deduce from Lemma \ref{Salem1} that
\beno
\begin{split}
\bigl\|T_{\p_kR_iR_jg}X^k\bigr\|_{L^r}\leq &C\Bigl\|\Bigl\{\sum_{j\in\Z}\bigl[{\rm M}\big({\rm M}g 2^{j'}\D_{j'}X^k\bigr)\bigr]^2\Bigr\}^{\f12}\Bigr\|_{L^r}\\
\leq &C\Bigl\|{\rm M}g\Bigl\{\sum_{j\in\Z}\bigl[ 2^{j'}\D_{j'}X^k\bigr]^2\Bigr\}^{\f12}\Bigr\|_{L^r}\\
\leq &C\|{\rm M}g\|_{L^q}\Bigl\|\Bigl\{\sum_{j\in\Z}\bigl[ 2^{j'}\D_{j'}X^k\bigr]^2\Bigr\}^{\f12}\Bigr\|_{L^p}\leq C\|g\|_{L^q}\|\na X\|_{L^p}.
\end{split}
\eeno
The same estimate holds for $T_{\p_kg}X^k,$ so that we obtain
\beno
\|R_iR_j(T_{\p_kg}X^k)\|_{L^r}\leq C\|T_{\p_kg}X^k\|_{L^r}\leq C\|g\|_{L^q}\|\na X\|_{L^p}.
\eeno
While due to $\dive X=0$ and \eqref{Saeq3a}, we write
\beno
\begin{split}
|\D_jR(X^k,\p_kR_iR_jg)(x)|=&|\p_k\D_jR(X^k, R_iR_jg)(x)|\\
\leq &C2^j{\rm M}\bigl(\sum_{j'\geq j-5}\D_{j'}X^k\wt{\D}_{j'}R_iR_jg\bigr)(x)\\
\leq &C{\rm M}\bigl(\sum_{j'\geq j-5}2^{j-j'}|2^{j'}\D_{j'}X^k| {\rm M}g\bigr)(x),
\end{split}
\eeno
from which, we infer
\beno
\begin{split}
\bigl\|R(X^k,\p_kR_iR_jg)\bigr\|_{L^r}\leq &\Bigl\|\Bigl\{\sum_{j\in\Z}\bigl[\sum_{j'\geq j-5}2^{j-j'}|2^{j'}\D_{j'}X^k| {\rm M}g\bigr]^2\Bigr\}^{\f12}\Bigr\|_{L^r}\\
\leq &C\Bigl\|{\rm M}g\Bigl\{\sum_{j\in\Z}\bigl[ 2^{j'}\D_{j'}X^k\bigr]^2\Bigr\}^{\f12}\Bigr\|_{L^r}\leq C\|g\|_{L^q}\|\na X\|_{L^p}.
\end{split}
\eeno
The same estimate holds for $R_iR_j(R(X^k,\p_kg)).$

Finally let us turn to the first term on the right hand side of \eqref{Saeq2a}. We first get, by a similar
derivation of \eqref{S9eq3qe}, that
\beno
[T_{X^k}; R_iR_j]\p_kg=
\sum_{|j-\ell|\leq 4}2^{2\ell}\int_{\R^2}\check{\th}(2^{\ell}z)\int_0^1\D_j\na X(x-\tau z)\cdot
z\,d\tau S_j\p_kg(x-z)\,dz,
\eeno
where $\th(\xi)\eqdefa\xi_i\xi_j|\xi|^{-2}\phi(\xi).$  Then we deduce from \eqref{Saeq3a} that
\beno
|[T_{X^k}; R_iR_j]\p_kg|\leq
\sum_{|j-\ell|\leq 4}2^{\ell}\int_{\R^2}\Psi(2^{\ell}z)\int_0^1|\D_j\na X(x-\tau z)|\,d\tau 2^j{\rm M}g(x-z)\,dz,
\eeno
for $\Psi(z)\eqdefa |z||\check{\th}(z)|.$ Now since $r\in ]1,\infty[$ and $\f1p+\f1q=\f1r<1,$ we can choose $\al,\beta\in ]0.1[$ satisfying
\beq \label{Saeq4a}
\al+\beta=1,\quad \al>\f12,\quad p\al>1\andf q\beta>1.
\eeq
We get, by applying H\"older's inequality, that
\beno
\begin{split}
|[T_{X^k}; R_iR_j]\p_kg(x)|\leq &\sum_{|j-\ell|\leq 4} \Bigl(\int_0^12^{2\ell}\int_{\R^2}\Psi(2^\ell z)|\D_j\na X(x-\tau z)|^{\f1\al}\,dz\,d\tau\Bigr)^\al\\
&\qquad\qquad\qquad\qquad\times \Bigl(2^{2\ell}\int_{\R^2}\Psi(2^\ell z)\bigl[{\rm M}g(x-z)\bigr]^{\f1\beta}\,dz\Bigr)^\beta\\
\leq &\sum_{|j-\ell|\leq 4}\bigl[{\rm M}(|\D_j\na X|^{\f1\al})\bigr]^\al\bigl[{\rm M}(({\rm M}g)^{\f1\beta})\bigr]^\beta,
\end{split}
\eeno
from which, \eqref{Saeq4a} and Lemma \ref{Salem1}, we deduce that
\beno
\begin{split}
\|[T_{X^k}; R_iR_j]\p_kg\|_{L^r}\leq &\Bigl\|\Bigl\{\sum_{j\in\Z}\bigl[{\rm M}(|\D_j\na X|^{\f1\al})\bigr]^{2\al}\Bigr\}^{\f1{2\al}}\Bigr\|_{L^{p\al}}^\al
\bigl\|{\rm M}({\rm M}g)^{\f1\beta})\bigr\|_{L^{q\beta}}^\beta\\
\leq &C\Bigl\|\Bigl\{\sum_{j\in\Z}\bigl[ 2^{j'}\D_{j'}X^k\bigr]^2\Bigr\}^{\f12}\Bigr\|_{L^p}\bigl\|{\rm M}g^{\f1\beta}\bigr\|_{L^{q\beta}}^\beta\\
\leq &C\|\na X\|_{L^p}\|g\|_{L^q}.
\end{split}
\eeno
By summing up the above estimate, we conclude the  proof of \eqref{Saeq1a}. \end{proof}

\section{Lipschitz estimate of elliptic equation of divergence form}\label{append}

The goal of this appendix is to generalize Proposition \ref{S9prop1} to elliptic equation of divergence form
 with bounded coefficients which may have a small gap across a  surface. The main result reads

\begin{prop}\label{Sapp}
{\sl Let $p\in ]2,\infty[,$ $X=\bigl(X_\la\bigr)_{\la\in\Lambda}$ be a non-degenerate family of vector fields in the sense of  Definition \ref{S1def1} with
$X_\la\in C^1_b(\R^2)$ and $\na X_\la\in L^p(\R^2)$ for each $\la\in\La.$
Let $a_{ij}\in L^\infty(\R^2)$ with $\sup_{\la\in\La}\bigl\|\partial_{X_\la} a_{i,j}\bigr\|_{L^\infty}\leq C$ and $\|(a_{ij})_{ij}-Id\|_{L^\infty}\leq \e_0$
for some $\e_0$ sufficiently small.
We assume moreover that $f\in L^p\cap L^{\f{2p}{2+p}}(\R^2)$ and  $\partial_{X_\la}f\in L^{\f{2p}{2+p}}(\R^2)$ for $\la\in\La.$
Then the following equation
\beq \label{Saeq3} \sum_{i,j=1}^2\partial_i(a_{ij}\partial_j u)=f \quad\mbox{for}\quad x\in\R^2,\eeq
has a unique solution $u\in \dot{W}^{1,p}(\R^2)\cap \dot{W}^{1,\infty}(\R^2) $ so that for any $s\in ]2/p,1[,$
\beq \label{Saeq1}
\begin{split}
\|\nabla u\|_{L^\infty}\leq  C_s\Bigl(&\e_0\Bigl[1+C(s,p,X)\sup_{\la\in\La}\|X_\la\|_{L^\infty}\bigl(\|\na X_\la\|_{L^p}^{\f2{p-2}}+\|\na X_\la\|_{L^p}^{\f2{ps-2}}\bigr)
\Bigr]\|f\|_{L^\f{2p}{2+p}}\\
&+\|f\|_{ L^{\f{2p}{2+p}}}^{1-\f2p}\|f\|_{L^p}^{\f2p}\Bigr)+\f{C}{I(X)}\sup_{\la\in\La}\Bigl(\e_0\|X_\la\|_{L^\infty}\|f\|_{L^\f{2p}{2+p}}\Bigr)^{1-\f2p}\\
&\qquad\times\Bigl(\bigl[1+\e_0\|\na X_\la\|_{L^\infty}\bigr]\|f\|_{L^\f{2p}{2+p}}+\e_0\bigl\|\partial_{X_\la}f\bigr\|_{ L^{\f{2p}{2+p}}}\Bigr)^{\f2p},
\end{split}\eeq for $C(s,p,X)$ given by \eqref{S6eq9a}.}
\end{prop}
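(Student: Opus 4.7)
\textbf{Proof plan for Proposition \ref{Sapp}.}
Rewrite the equation in the form $\D u=f-\dive\cA$ where $\cA^i\eqdef(a_{ij}-\delta_{ij})\p_j u$; this splits off the small perturbation of the Laplacian. Existence and uniqueness in $\dot W^{1,p}(\R^2)$, together with the a priori bound $\|\na u\|_{L^p}\leq C\|f\|_{L^{\f{2p}{p+2}}}$, follow from the Banach fixed-point argument applied to $v\mapsto -(-\D)^{-1}f+(-\D)^{-1}\p_i((a_{ij}-\delta_{ij})\p_j v)$, whose Lipschitz constant on $\dot W^{1,p}$ is $\leq C\e_0<1$ for $\e_0$ small; here we use $\dot W^{2,\f{2p}{p+2}}\hookrightarrow\dot W^{1,p}$ in $\R^2$.

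Next, apply $\p_{X_\la}$ to the equation and commute via $[\p_{X_\la},\p_i]=-\p_iX_\la^k\p_k$ (using $\dive X_\la=0$). A direct computation puts $\p_{X_\la}u$ into the divergence form
$$\p_i(a_{ij}\p_j\p_{X_\la}u)=\p_{X_\la}f-\p_i(\p_{X_\la}a_{ij}\p_j u)+\p_i(a_{ij}\p_jX_\la^k\p_k u)+\p_k(\p_iX_\la^k\,a_{ij}\p_j u).$$
The last three right-hand-side terms are controlled in $L^p$ by $C(1+\|\na X_\la\|_{L^\infty})\|\na u\|_{L^p}$, so the same $L^p$ theory gives
$$\|\na\p_{X_\la}u\|_{L^p}\leq C\bigl(\|\p_{X_\la}f\|_{L^{\f{2p}{p+2}}}+(1+\|\na X_\la\|_{L^\infty})\|f\|_{L^{\f{2p}{p+2}}}\bigr).$$
Combining this with the identity $\p_{X_\la}\p_j u=\p_j\p_{X_\la}u-\p_jX_\la^k\p_k u$ yields
$$\|\p_{X_\la}\cA^i\|_{L^p}\leq C\bigl[(1+\e_0\|\na X_\la\|_{L^\infty})\|f\|_{L^{\f{2p}{p+2}}}+\e_0\|\p_{X_\la}f\|_{L^{\f{2p}{p+2}}}\bigr].$$

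To pass to the $L^\infty$ estimate, we write $\na u=-\na(-\D)^{-1}f+\na^2(-\D)^{-1}\cA^i$ (summed over $i$). The free term is handled by the 2-D Gagliardo--Nirenberg inequality $\|h\|_{L^\infty}\leq C\|h\|_{L^p}^{1-\f2p}\|\na h\|_{L^p}^{\f2p}$ applied to $h=\na(-\D)^{-1}f$, which produces the contribution $C_s\|f\|_{L^{\f{2p}{p+2}}}^{1-\f2p}\|f\|_{L^p}^{\f2p}$ of \eqref{Saeq1}. The perturbation term is precisely of the shape treated by Proposition \ref{S9prop1}; we apply it with $g=\cA^i$ and substitute the bounds $\|\cA^i\|_{L^p}\leq\e_0\|\na u\|_{L^p}\leq C\e_0\|f\|_{L^{\f{2p}{p+2}}}$, $\|\cA^i\|_{L^\infty}\leq\e_0\|\na u\|_{L^\infty}$, together with the $L^p$ estimate on $\p_{X_\la}\cA^i$ above. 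The key remaining point is absorption: the factor $\|\cA^i\|_{L^\infty}\leq\e_0\|\na u\|_{L^\infty}$ yields contributions of the form $\f1{I(X)}(\e_0\|X_\la\|_{L^\infty}\|\na u\|_{L^p})^{1-\f2p}(\e_0\|\na X_\la\|_{L^p}\|\na u\|_{L^\infty})^{\f2p}$ and its $ps$-analogue, to which we apply Young's inequality with exponents $(p/2,p/(p-2))$ (resp.\ $(ps/2,ps/(ps-2))$). Each split produces a small $\eta\|\na u\|_{L^\infty}$ piece, absorbed on the left, and a residual of the form $\e_0 I(X)^{-\f{p}{p-2}}\|X_\la\|_{L^\infty}\|\na X_\la\|_{L^p}^{\f2{p-2}}\|f\|_{L^{\f{2p}{p+2}}}$ (using $\e_0^{p/(p-2)}\leq\e_0$ for $\e_0\leq1$) together with the symmetric $ps$-residual; summing these reproduces the prefactor $\e_0 C(s,p,X)$ of \eqref{Saeq1}. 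After choosing $\e_0$ (and then $\eta$) small enough to absorb the remaining $\e_0\|\na u\|_{L^\infty}$ and $\eta\|\na u\|_{L^\infty}$ pieces on the left, the desired estimate \eqref{Saeq1} follows. The principal difficulty is precisely this bookkeeping, ensuring that the interplay between the $L^p$-estimate of $\na\p_{X_\la}u$, the striated $L^\infty$-estimate of Proposition \ref{S9prop1}, and the Young absorption aligns the exponents of $I(X)^{-1}$ and $\|\na X_\la\|_{L^p}$ with the precise form of $C(s,p,X)$ given in \eqref{S6eq9a}.
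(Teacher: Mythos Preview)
Your proposal is correct and follows essentially the same route as the paper: rewrite the equation as a small perturbation of the Laplacian, obtain the $L^p$ bound on $\na u$ by absorption, differentiate along $X_\la$ to control $\|\na\p_{X_\la}u\|_{L^p}$ (and hence $\|\p_{X_\la}\cA\|_{L^p}$), then apply Proposition \ref{S9prop1} to $g=\cA$ and use Young's inequality with exponents $(p/2,p/(p-2))$ and $(ps/2,ps/(ps-2))$ to absorb the $\e_0\|\na u\|_{L^\infty}$ contributions. The only cosmetic difference is that you frame existence via a Banach fixed point, whereas the paper is content to present the a priori estimate for smooth solutions.
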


\begin{proof} The proof of this proposition consists in the estimate of the striated regularity of the solution \eqref{Saeq3} and then applying
 Proposition \ref{S9prop1}. Let us denote
\beno
\frak{G}\eqdefa \left((a_{ij})_{2\times2}-{\rm Id}\right)\na u,
\eeno For simplicity, we just present the {\it a priori} estimate for smooth enough solutions of \eqref{Saeq3}.
We first write
\beq \label{Saeq5} \nabla u=\sum_{i,j=1}^2\nabla(-\Delta)^{-1}\partial_i((a_{ij}-\delta_{ij})\p_j u)+\nabla\Delta^{-1}f,\eeq
from which, we infer
\beno
\begin{split}
\|\nabla u\|_{L^p}\leq & C\bigl(\bigl\|\frak{G}\|_{L^p}+\|f\|_{L^\f{2p}{2+p}}\bigr)
\leq C\bigl(\e_0\|\na u\|_{L^p}+\|f\|_{L^\f{2p}{2+p}}\bigr).
\end{split}
\eeno
So that by taking $\e_0$ sufficiently small, we obtain
\beq\label{Saeq2}
 \|\nabla u\|_{L^p}\leq C\|f\|_{L^{\frac{2p}{2+p}}}.
 \eeq
Whereas for any $C^1$ vector field $X,$ we  get, by applying  $\partial_X$ to \eqref{Saeq3}, that
$$\sum_{i,j=1}^2\partial_i(a_{ij}\partial_j \partial_X u)=\partial_X f-\sum_{i,j=1}^2\partial_i(\partial_Xa_{ij}\partial_j u)-\sum_{i,j=1}^2\partial_i(a_{ij}\partial_j X\partial_j u).$$
Then along the same line to proof of \eqref{Saeq2}, we deduce
\beq \label{Saeq4}
\begin{split}
\|\nabla\partial_X u\|_{L^p}\leq & C\Bigl(\|\partial_Xf\|_{L^{\frac{2p}{2+p}}}+\bigl(\|\p_Xa_{ij}\|_{L^\infty}+\|\na X\|_{L^\infty}\bigr)\|\na u\|_{L^p}\Bigr)\\
\leq &C\Bigl(\|\partial_Xf\|_{L^{\frac{2p}{2+p}}}+\bigl(\|\p_Xa_{ij}\|_{L^\infty}+\|\na X\|_{L^\infty}\bigr)\|f\|_{L^{\frac{2p}{2+p}}}\Bigr).
\end{split}
\eeq
On the other hand,
for any $s\in ]2/p,1[,$ we deduce from Proposition \ref{S9prop1} and \eqref{Saeq5}
that
\beno
\begin{split}
\|&\na u\|_{L^\infty}\leq C\bigl(\bigl\|\frak{G}\bigr\|_{L^p}+\|\frak{G}\|_{L^\infty}\bigr)+\|\na\D^{-1}f\|_{L^\infty}+\f{C_s}{I(X)}\sup_{\la\in\La}\Bigl\{\bigl(\|X_\la\|_{L^\infty}\|\frak{G}\|_{L^p}\bigr)^{1-\f2p}\\
&\times\bigl(\|\na X_\la\|_{L^p}\|\frak{G}\|_{L^\infty}+\|\p_{X_\la}\frak{G}\|_{L^p}\bigr)^{\f2p}+\bigl(\|X_\la\|_{L^\infty}\|\frak{G}\|_{L^p}\bigr)^{1-\f2{ps}}\bigl(\|\na X_\la\|_{L^p}\|\frak{G}\|_{L^\infty}\bigr)^{\f2{ps}}\Bigr\}.
\end{split}
\eeno
Due to $p\in ]2,\infty[,$ we have
\beno
\|\na\D^{-1}f\|_{L^\infty}\leq C\|f\|_{L^p}^{\f2p}\|f\|_{L^{\f{2p}{2+p}}}^{1-\f2p}.
\eeno
Applying Young's inequality yields
\beno
\begin{split}
\f{C}{I(X)}&\bigl(\e_0\|X_\la\|_{L^\infty}\|\na u\|_{L^p}\bigr)^{1-\f2p}\bigl(\e_0\|\na X_\la\|_{L^p}\|\na u\bigr\|_{L^\infty}\bigr)^{\f2p}\\
&\leq C\e_0\Bigl(\|\na u\|_{L^\infty}+I(X)^{-\f{p}{p-2}}\|X_\la\|_{L^\infty}\|\na X_\la\|_{L^p}^{\f2{p-2}}\|\na u\|_{L^p}\Bigr).
\end{split}
\eeno
As a result, it comes out
\beno
\begin{split}
\|\na u\|_{L^\infty}\leq & C\|f\|_{L^p}^{\f2p}\|f\|_{L^{\f{2p}{2+p}}}^{1-\f2p}+ C\e_0\Bigl(\|\na u\|_{L^p}+\|\na u\|_{L^\infty}\\
&+C(s,p,X)\|X_\la\|_{L^\infty}\bigl(\|\na X_\la\|_{L^p}^{\f2{p-2}}+
\|\na X_\la\|_{L^p}^{\f2{ps-2}}\bigr)\|\na u\|_{L^p}\Bigr)\\
& +\f{C}{I(X)}\sup_{\la\in\La}\bigl(\e_0\|X_\la\|_{L^\infty}\|\na u\|_{L^p}\bigr)^{1-\f2p}\bigl(\|\na u\|_{L^p}+\e_0\bigl\|\na\p_{X_\la} u\|_{L^p}\bigr)^{\f2p}.
\end{split}
\eeno
By taking $\e_0$ sufficiently small and inserting the Estimates \eqref{Saeq2} and \eqref{Saeq4} to the above inequality, we achieve \eqref{Saeq1}.
 \end{proof}

 \begin{rmk}
By repeating  the proof of Proposition \ref{Sapp}, we can prove the same Lipschitz estimate \eqref{Saeq1} for the solutions
of the following Stokes type system:
$$\sum_{i,j=1}^2\partial_i\left(a_{ij}\partial_j u\right)=f-\nabla p\andf  \dive u=0 \quad\mbox{for}\quad x\in\R^2.$$
We can even work for the above problems in the  multi-dimensional case.
\end{rmk}

\bigbreak \noindent {\bf Acknowledgments.}
P. Zhang is partially supported
by NSF of China under Grants  11731007 and 11688101, Morningside Center of Mathematics of The Chinese Academy of Sciences and innovation grant from National Center for
Mathematics and Interdisciplinary Sciences.

\medskip


{\small

\end{document}